\newtheorem{theorem}{Theorem}[section]
\newtheorem{lemma}[theorem]{Lemma}
\newtheorem{proposition}[theorem]{Proposition}
\theoremstyle{definition}
\newtheorem{definition}[theorem]{Definition}
\newtheorem{remark}[theorem]{Remark}
\numberwithin{equation}{section}
\newcommand\eps{\varepsilon}
\newcommand\E{\mathbb{E}}
\newcommand\R{\mathbb{R}}
\newcommand\Z{\mathbb{Z}}
\newcommand\Lv{{350}}
\newcommand\Hv{{39/40}}
\newcommand{\Mod}[1]{\ (\mathrm{mod}\ #1)}
\renewcommand\d{\, \textnormal{d}} 
\title{Primes in arithmetic progressions and short intervals without $L$-functions}
\author{Kaisa Matom\"aki}
\address{Department of Mathematics and Statistics, University of Turku, 20014 Turku, Finland}
\email{ksmato@utu.fi}
\author{Jori Merikoski}
\address{Mathematical Institute, University of Oxford\\ Radcliffe Observatory Quarter, Woodstock Rd\\
Oxford OX2 6GG\\
UK}
\email{jori.merikoski@maths.ox.ac.uk}
\author{Joni Ter\"{a}v\"{a}inen}
\address{Department of Mathematics and Statistics, University of Turku, 20014 Turku, Finland}
\email{joni.p.teravainen@gmail.com}
\begin{document}

\begin{abstract}
We develop a sieve that can  detect primes in multiplicatively structured sets under certain conditions. We apply it to obtain a new $L$-function free proof of Linnik's problem of bounding the least prime $p$ such that $p\equiv a\Mod q$ (with the bound $p \ll q^\Lv$) as well as a new $L$-function free proof that the interval $(x-x^\Hv, x]$ contains primes for every large $x$. In a future work we will develop the sieve further and provide more applications.
\end{abstract}

\maketitle

\section{Introduction}

We develop a sieve that can detect primes in sets that are multiplicative structured in a certain sense. Before stating our sieve results, we discuss applications toward Linnik's problem and primes in short intervals.

\subsection{Applications}

\subsubsection{Linnik's theorem}
Linnik's celebrated theorem \cite{Linnik1, Linnik2} states that there exists an absolute constant $L > 1$ such that if $q\in \mathbb{N}$ and $a$ is coprime to $q$, the least prime $p\equiv a\pmod q$ is always $\ll q^{L}$. The best known exponent here is $L=5$, due to Xylouris \cite{Xy}, refining the work of Heath-Brown~\cite{H-B-Linnik}. Most proofs of Linnik's theorem, including these, heavily use information concerning zeros of Dirichlet $L$-functions.

As an application of our sieve, we obtain a new proof of Linnik's theorem which is ``$L$-function free'' in the sense that the zeros of Dirichlet $L$-functions do not play any role in the proof.

\begin{theorem}\label{thm_Linnik}
Let $q\geq q_0$, where $q_0$ is an effectively computable large integer. Then, for every $a \in \mathbb{Z}_q^\times$, there exists a prime $p \leq q^{\Lv}$ such that $p \equiv a \Mod{q}$.
\end{theorem}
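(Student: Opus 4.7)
The strategy is to apply the main sieve theorem of the paper, once established, to the sequence $\mathbf{1}_{n \equiv a \Mod{q}}$ restricted to $n \in (x/2, x]$ with $x := q^{\Lv}$, and to produce a lower bound of the expected order $\gg x/(\varphi(q)\log x)$ for the number of primes in this progression, which is much larger than $1$. Since the sieve is designed to detect primes in ``multiplicatively structured'' sets, the entire task will be to verify that an arithmetic progression modulo $q$ is sufficiently multiplicatively structured in the required sense.

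Concretely, I expect the sieve to demand two kinds of input. The first is a Type I estimate, asserting that the divisor count $|\{n \in (x/2, x] : n \equiv a \Mod{q},\ d \mid n\}|$ is well approximated by $x/(2dq)$ on average over $d$ in a suitable range (a power of $x$, tempered by a small power of $q$). The second is a Type II estimate, controlling bilinear sums of the shape $\sum_{mn \equiv a \Mod{q},\, mn \in (x/2,x]} \alpha_m \beta_n$ for generic $1$-bounded sequences with $m \sim M$, $n \sim N$ and $MN \asymp x$. Orthogonality of Dirichlet characters mod $q$ converts both into averages of character sums, which I would attack with inputs that do not involve zeros of $L(s,\chi)$: the large sieve inequality, Weil-type bounds for short character sums and Kloosterman sums, and averaged Bombieri--Vinogradov-style estimates extracted combinatorially. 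The specific exponent $\Lv$ should emerge from optimizing the sieve weights against the lengths of the ranges in which Type I and Type II information can be secured by these means.

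The main obstacle, as in every proof of Linnik's theorem, is the potential existence of an exceptional real primitive character $\chi_1 \Mod{q_1}$ with $q_1 \mid q$, classically playing the role of a Siegel zero. The standard tool for handling it, the Deuring--Heilbronn zero-repulsion phenomenon, is intrinsically $L$-function theoretic and is unavailable here. The $L$-function free replacement I would pursue is a dichotomy: in the absence of an exceptional $\chi_1$, the sieve hypotheses should be verifiable directly with power savings; in its presence, one exploits the fact that $\chi_1$ is completely multiplicative and $1$-bounded, so the quadratic progression it tracks carries its own multiplicative structure, which can be fed back into the Type II input to recover the cancellation that the first case enjoyed for free. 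Making this dichotomy quantitatively sharp enough to reach the exponent $\Lv$ — and in particular handling the interaction of $\chi_1$ with the sieve weights without leaking back into $L$-function language — is where I expect the bulk of the work, and the main technical novelty, to lie.
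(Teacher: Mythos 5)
Your high-level framing is in the right spirit (apply a prime-detecting sieve to the progression, verify level-of-distribution input, deal with the Siegel-zero obstacle by a dichotomy), but the concrete machinery you propose is quite different from what the paper actually uses, and in a few places you over-engineer the inputs.

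First, the paper's level of distribution input for Linnik is the \emph{trivial} one: the count of $n\le x$ with $d\mid n$, $n\equiv a\Mod q$, is $x/(qd)+O(1)$, so $(\mathbf 1_{n\equiv a (q)})$ has level $1-\log q/\log x -\varepsilon$ with size $x/q$ (Lemma \ref{le:lod}). No large sieve, no Burgess, no Weil/Kloosterman bounds, and crucially \emph{no bilinear Type II estimate} is used anywhere. The sieve theorem you would need (Theorem \ref{thm:non-torsion-special}) asks only for that Type I estimate together with a Brun--Titchmarsh upper bound in each coset (coming from the linear sieve), a ``mass'' normalization (Mertens), and a lower bound on the mass of primes in each index-$2$ coset $aH$. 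The place where classical proofs invoke Type II / zero-density information is replaced here by a completely different device: after an exact linear sieve identity reduces the problem to lower-bounding $\sum \mathbf 1_{f(p_1p_2p_3)=a}/(p_1p_2p_3)$ over $p_j\in(x^{1/6},x^{1/3}]$, the paper proves a \emph{Fourier-analytic lower bound for the triple convolution} $(g*g*g)(a)$ on the finite abelian group $G=\Z_q^\times$ (Proposition \ref{prop:Fouriersym}), given only that $g$ is $[0,\kappa]$-valued with mean $\sim 1$ and at least some mass in every index-$2$ coset $aH$. This convolution estimate is the genuine new idea, and nothing in your proposal anticipates it.

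Second, your treatment of the exceptional character is underdeveloped and differs from the paper's. The parity/Siegel-zero obstruction surfaces precisely in the index-$2$-coset hypothesis (A4), and the paper handles a possible failure by running the argument for $x=q^B$ over a whole range $B\in[L/2,L]$: either the offending index-$2$ subgroup $H_B$ varies with $B$, which is ruled out by a short inclusion--exclusion / orthogonality argument, or it is the same $H$ throughout, in which case primes are heavily biased toward $H$ and one sieves the nonnegative sequence $(1*\psi)(n)\mathbf 1_{n\equiv a(q)}$ directly with a \emph{two-dimensional} sieve, using only the effective lower bound $L(1,\psi)\gg q^{-1/2}$ (Lemma \ref{le_MV}) --- no Deuring--Heilbronn and no pretentious bookkeeping. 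Your sketch gestures at ``feeding $\chi_1$ back into the Type II input,'' but without a concrete mechanism (e.g.\ sieving $1*\psi$, or the 2D sieve computation) this step is a gap rather than a plan; it is exactly where the quantitative content lies and where your proposal stops short. So: correct intuition about where the difficulty is, but a genuinely different --- and on the level-of-distribution side, unnecessarily heavy --- route, missing the triple-convolution lemma and the ``vary $B$'' trick that make the paper's exponent computable.
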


We have not optimized the exponent (see Remarks~\ref{rem:SecondLargest},~\ref{rem:UseMoreSn},~\ref{rem:Burgess},~\ref{rem:eta+-impr}, and~\ref{rem:eta+impr} for more discussion) but rather tried to use as simple tools as possible. Previously, Friedlander and Iwaniec gave a sieve-theoretic proof of Linnik's theorem in~\cite[Chapter 24]{Opera} and made it explicit in a recent series of two papers \cite{FI1, FI2}. Their proof produced the value $L=75\, 744\, 000$, and it relied on the classical zero-free region. Granville, Harper and Soundararajan \cite{GHS} gave an $L$-function-free proof of Linnik's theorem using  pretentious number theory. They do not give an explicit value for $L$, but mention (private communication) that it would also be rather large. A major advantage of our sieve is that the numerical values are not only better but also very easy to compute in practice, in comparison to other methods. 

\subsubsection{Primes in short intervals}
Hoheisel \cite{Hoheisel} proved that there is some exponent $\alpha<1$ such that for all sufficiently large $x$ there exists a prime number in the interval $(x-x^\alpha, x]$. The current records for the number of primes in a short interval $(x-x^\alpha, x]$ are an asymptotic formula for $\alpha> 7/12$ by Huxley \cite{Huxley} (improved to $\alpha=7/12-o(1)$ by Heath-Brown \cite{HBprimesinshort}) and a correct order lower bound for $\alpha =0.525$ by Baker, Harman and Pintz \cite{Huxley}.  All of the above-mentioned papers rely heavily on the theory of the $\zeta$-function, especially on a strong zero-free region. 

As an application of our sieve we get an elementary~\footnote{By elementary we mean that the proof does not use complex analysis. It uses linear sieve which has (e.g. in~\cite[Chapter 11]{Opera}) an elementary proof, avoiding complex analysis and prime number theorem} proof of the following.
\begin{theorem}
\label{thm:Primesinshorts}
Let $\alpha = \Hv$ and let $x \geq 2$ be sufficiently large. Then 
\[
\sum_{x-x^\alpha < p \leq x} 1 \geq \frac{1}{40} \cdot \frac{x^\alpha}{\log x}
\]
\end{theorem}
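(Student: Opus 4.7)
The plan is to apply the abstract sieve developed earlier in the paper to the set $A = (x-x^\alpha, x]\cap\Z$ with $\alpha = 39/40$. This set is as multiplicatively well-structured as one could hope for: for any squarefree $d \leq x^\alpha$ one has $|A\cap d\Z| = x^\alpha/d + O(1)$, giving a nearly perfect level of distribution up to $d \leq x^\alpha$. The sieve converts such ``level of distribution plus bilinear information'' into a lower bound for $\pi(x)-\pi(x-x^\alpha)$.

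Concretely, I would first record the Type I input demanded by the sieve: for any well-factorable coefficients $\lambda_d$ supported on $d \leq x^{\alpha-\delta}$,
\[
\sum_{d} \lambda_d \Bigl(|A\cap d\Z| - \frac{x^\alpha}{d}\Bigr) = O(x^{\alpha-\delta/2}),
\]
which is immediate from the interval structure. Next I would verify the Type II input: bilinear sums
\[
\sum_{\substack{m\sim M,\ n\sim N\\ mn\in A}}\alpha_m\beta_n
\]
with $MN\asymp x$ in a suitable range of $(M,N)$ need to exhibit power saving. Because $\alpha$ is very close to $1$, such sums can be controlled by elementary means, for example via a Fourier expansion of $\mathbf{1}_A$ combined with Cauchy--Schwarz and the large sieve, without invoking the zero-free region of $\zeta$. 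Feeding these two inputs into the abstract sieve, together with the elementary linear sieve mentioned in the footnote, then produces a lower bound of the shape $c \cdot x^\alpha/\log x$; tracking the numerics at $\alpha=39/40$ yields $c \geq 1/40$.

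The main obstacle is obtaining the Type II input in a wide enough range of $(M,N)$ using only elementary tools, since classical treatments of primes in short intervals extract such estimates from mean-value theorems and zero-density results for $\zeta(s)$, which are exactly what the paper forbids. The choice $\alpha=39/40$ is large enough to leave comfortable slack: the range of $(M,N)$ demanded by the sieve becomes reachable by the large sieve together with a combinatorial decomposition such as Heath--Brown's identity. A smaller $\alpha$ would force more delicate bilinear bounds than elementary methods immediately supply, which is why the exponent here is far from the current record $\alpha=0.525$.
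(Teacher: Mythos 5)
Your proposal departs substantially from the paper's argument, and the place where it departs is also where it has a gap.

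The paper does not feed Type II bilinear information into a sieve. The sieve theorem it uses (Theorem~\ref{thm:shorts-special}) has three hypotheses, and none of them is an asymptotic or power-saving estimate for bilinear sums $\sum_{mn\in A}\alpha_m\beta_n$. The hypotheses are: (A1*) a level-of-distribution statement for the interval, which for $(x-x^\alpha,x]$ is the trivial $\sum_{dn\in(y-z,y]}1=z/d+O(1)$; (A2*) a Brun--Titchmarsh \emph{upper} bound for primes in very short intervals $((1-\Delta)y,y]$, supplied by the linear sieve; and (A3*) a Mertens-type \emph{lower} bound for $\sum_{x^{\beta_1}<p\le x^{\beta_2}}\log p/p$. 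All three are easy. The paper then reduces, via an exact-form linear sieve (Lemma~\ref{le:lowerlinear}), to lower-bounding a sum over products of three primes $p_1p_2p_3\in(x-x^\alpha,x]$, and that lower bound is obtained through a Fourier-analytic (circle-method) estimate for a triple convolution on $\Z$ (Proposition~\ref{prop:tripconvZ}), using only the pointwise upper bound from (A2*) and the average lower bound from (A3*). This is precisely what lets the argument avoid $L$-functions: the paper explicitly notes that it uses only \emph{lower bounds} for a trilinear quantity, not an asymptotic formula for any Type II sum.

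Your proposal, by contrast, requires ``Type II input: bilinear sums $\sum_{m\sim M,n\sim N,\,mn\in A}\alpha_m\beta_n$ with power saving'' and asserts that this can be obtained ``via a Fourier expansion of $\mathbf 1_A$ combined with Cauchy--Schwarz and the large sieve'' because $\alpha$ is close to $1$. That is the gap. Power-saving Type II estimates for arbitrary bounded $\alpha_m,\beta_n$ with $mn$ constrained to a short interval $(x-x^\alpha,x]$ are exactly where the classical treatments bring in mean-value theorems for Dirichlet polynomials plus zero-density/zero-free information for $\zeta$; Cauchy--Schwarz and the large sieve alone do not deliver them in the range a Type~I/Type~II sieve would require, even at $\alpha=39/40$. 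Heath-Brown's identity arranges the combinatorics but does not manufacture the missing cancellation. Until you can actually prove such a Type~II estimate by elementary means (which, if you could, would likely be a result of independent interest), the proposal does not close, and the burden of proof is precisely the one the paper's new trilinear-lower-bound sieve was designed to circumvent. Finally, the constant $1/40$ in the paper is the output of a concrete double integral coming from Theorem~\ref{thm:shorts-special}; your sketch gives no mechanism that would produce that, or indeed any explicit, constant.
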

Granville, Harper, and Soundararajan \cite{GHS} gave a pretentious proof of Hoheisel's theorem, which approaches an asymptotic formula as $\alpha \to 1$. It is not clear for what value of $\alpha < 1$ they could obtain a positive lower bound, but such $\alpha$ is likely to be very close to $1$.

The proof of Theorem~\ref{thm:Primesinshorts} is easier than the proof of Theorem~\ref{thm_Linnik}. The deduction from the relevant sieve theorem is quickly done in Section~\ref{ssec:PrimesInShorts} while the proof of the short interval sieve theorem is finished in Section~\ref{sec:shortspecialproof}.

\subsection{Sieve theorems}
Before stating our sieve theorem, we need to introduce some notation. Throughout, $\mathcal{N}$ will denote a multiplicatively closed subset of $\mathbb{N}$ (i.e. a subset such that $a, b \in \mathcal{N} \implies ab \in \mathcal{N}$). For any real number $x \geq 1$ and any set $\mathcal{B} \subset \mathbb{N}$, we write 
\[
\mathcal{B}(x) = \{n \in \mathcal{B}, n \leq x\}.
\]
\begin{definition}[Level of distribution]
\label{def:lod}
Let $\mathcal{N} \subseteq \mathbb{N}$ be multiplicatively closed, $x, X \geq 2$, and $k, \theta \in \mathbb{R}_+$. Let $(a_n)_{n\in \mathcal{N}(x)}$ be a sequence of complex numbers, and let $h\colon \mathcal{N}\to \mathbb{R}_{\geq 0}$ be a multiplicative function such that $h(p) < p$ for every prime $p \in \mathcal{N}$. 

We say that the sequence $(a_n)_{n\in \mathcal{N}(x)}$ has \emph{level of distribution $\theta$ with dimension $k$, size $X$, and density $h$} if
\begin{align*}
\sum_{\substack{d \in \mathcal{N}(x^{\theta})}} \mu^2(d) \left|\sum_{\substack{n \leq x/d \\ dn \in \mathcal{N}}} a_{dn} - \frac{h(d)}{d} X \right|  = o \bigg(X \prod_{p < x} \bigg( 1- \frac{h(p)}{p} \bigg) \bigg),
\end{align*}
and if there exists a constant $C\geq 1$ such that for all $z\geq w\geq 2$ we have
\begin{align}
\label{eq:h(p)condition}
\prod_{\substack{w\leq p< z \\ p \in \mathcal{N}}}\left(1-\frac{h(p)}{p}\right)^{-1}\leq \left(1+\frac{C}{\log w}\right)\left(\frac{\log z}{\log w}\right)^k.
\end{align}
\end{definition}

Next we state two different sieve theorems, which will be used in the proofs of Theorems~\ref{thm_Linnik} and~\ref{thm:Primesinshorts}, respectively. As we will discuss in Section~\ref{ssec:future}, in a forthcoming work we will develop more general versions of the sieve theorems and give further applications.

\subsubsection{The sieve theorem for finite groups}

For $\delta > \eta > 0$, we define 
\begin{align}
\label{eq:calSpecFdef}
\mathcal{F}(\delta,\eta)\coloneqq 1-\frac{1-\delta}{\delta^2}\max\left\{\frac{1}{4}\sqrt{1+(4\delta-1)^2},\delta-\eta\right\}.
\end{align}
Here we state a sieve theorem that suffices for our application to Linnik's problem. We will prove this in Section~\ref{sec:non-torsion-specialproof}.
\begin{theorem} 
 \label{thm:non-torsion-special}
Let $\eta \in (0, 1)$ and $\kappa\in (2, 2+1/12)$, and let $\theta \colon [1/2, 1] \to (1/2, 1)$ be a fixed increasing function. Let $\mathcal{N} \subseteq \mathbb{N}$ be multiplicatively closed, let $G$ be a finite (multiplicative) abelian group, and let $a \in G$. Let $f\colon \mathcal{N}\to G$ and $h \colon \mathcal{N} \to \mathbb{R}_{\geq 0}$ be multiplicative. Suppose that the following hold for some $Y > 0$ and $x \geq 2$ (where $o(1) \to 0$ when $x \to \infty$).

\begin{enumerate}[({A}1)]
\item \label{hyp:lod-special}(Level of distribution) For any $y = x^{\rho} \in [x^{1/2},x]$, the sequence $(1_{f(n)=a})_{n\in \mathcal{N}(y)}$ has level of distribution $\theta(\rho)$ with dimension $1$, size $y/Y$ and density $h$.
\item \label{hyp:BT-special}(Brun--Titchmarsh estimate) For each $g\in G$, we have 
\[
\sum_{\substack{x^{1/6} < p \leq x^{1/3} \\ p \in \mathcal{N}}} \frac{1_{f(p)=g}}{p} \leq  \frac{\kappa}{|G|} \sum_{\substack{x^{1/6} < p \leq x^{1/3} \\ p \in \mathcal{N}}} \frac{1}{p} 
\]
\item \label{hyp:MassLong-special} ($\mathcal{N}$ contains most primes) We have
\[
\sum_{\substack{x^{1/6} < p \leq x^{1/3} \\ p \in \mathcal{N}}} \frac{1}{p} = \log 2 + o(1).
\]
\item \label{hyp:index2-special}(Mass in index $2$ cosets) For any subgroup $H\leq G$ of index $2$, we have
\[
\sum_{\substack{x^{1/6} < p \leq x^{1/3} \\ p \in \mathcal{N}}} \frac{1_{f(p)\in a H}}{p} \geq 
 \eta \cdot \frac{1}{2}\sum_{\substack{x^{1/6} < p \leq x^{1/3} \\ p \in \mathcal{N}}} \frac{1}{p} 
\]
\end{enumerate}
Then
\begin{align}
\label{eq:FinalLowSpec}
\begin{aligned}
&\sum_{\substack{x^{1/2} < p \leq x \\ p \in \mathcal{N}}} \frac{1_{f(p)=a}}{p} \geq \frac{1}{|G|}\left(\frac{\log^3 2}{3} \mathcal{F}\left(\frac{1}{\kappa},\frac{\eta}{\kappa}\right) + o(1)\right) \\
& \, - \frac{e^{\gamma} \log x}{Y} \int_{1/2}^1 \int_{\theta(\rho)/2}^{1/2} \prod_{\substack{p < x^{\frac{\rho}{2} (\theta(\rho)-\beta)} \\ p \in \mathcal{N}}} \left(1-\frac{h(p)}{p}\right)\frac{\d\beta}{\beta} \d\rho + o\left(\frac{\log x}{Y} \prod_{\substack{p \leq x \\ p \in \mathcal{N}}} \left(1-\frac{h(p)}{p}\right)\right).
\end{aligned}
\end{align}
\end{theorem}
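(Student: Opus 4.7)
The plan is to realise the target sum as a dyadically-summed collection of sieve problems and to iterate Buchstab's identity, using (A1) and the fundamental lemma to evaluate the main pieces and using (A2)--(A4) to control the Buchstab corrections. At scale $y=x^\rho$ for $\rho\in[1/2,1]$, the sequence $\mathcal A^{(y)}=(1_{f(n)=a})_{y/2<n\le y,\,n\in\mathcal N}$ has level of distribution $\theta(\rho)$ by (A1), and a prime in $(y/2,y]$ is precisely an integer in that interval coprime to all primes below $y^{1/2}$. I apply three Buchstab iterations, lowering the sieve parameter from $y^{1/2}$ down to $y^{1/6}$, to decompose the prime count as a sifted sum $S(\mathcal A^{(y)},y^{1/6})$ plus Buchstab corrections indexed by ordered tuples of primes in $[y^{1/6},y^{1/2})$.

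The main piece $S(\mathcal A^{(y)},y^{1/6})$ is estimated by the fundamental lemma---$\theta(\rho)>1/2$ comfortably exceeds $1/6$---giving an asymptotic of the form $|G|^{-1}V(y^{1/6})\cdot y/Y$, where $V(z)=\prod_{p<z,\,p\in\mathcal N}(1-h(p)/p)$. Summing over dyadic $y$ with weight $1/y$, the Buchstab step at parameter $\beta\in[\theta(\rho)/2,1/2]$ produces a level-of-distribution remainder of exactly the shape of the double integral appearing in \eqref{eq:FinalLowSpec}. The heart of the proof is then the triple Buchstab contribution
\[
\Sigma_3 \;=\; \sum_{\substack{x^{1/6}<p_3<p_2<p_1\le x^{1/3}\\ p_i\in\mathcal N}}\frac{S(\mathcal A_{p_1p_2p_3},p_3)}{p_1p_2p_3},
\]
the $p_i$ being confined (via one more Buchstab if necessary) to the type-II range $(x^{1/6},x^{1/3}]$, so that $p_1p_2p_3\in(x^{1/2},x]$. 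After substituting the fundamental-lemma main term for the inner sifted sum and expanding the constraint $f(p_1p_2p_3)=a$ in characters of $G$, the trivial character contributes $|G|^{-1}(\log 2)^3/3$ via Mertens and (A3): three factors of $\log 2$ from the range, and $1/3$ from the ordered triple. For each non-trivial character, a Cauchy--Schwarz argument combined with (A2) bounds the contribution by $\frac{1-\delta}{\delta^2}\cdot\tfrac14\sqrt{1+(4\delta-1)^2}$ times the trivial-character term, with $\delta=1/\kappa$. For the quadratic characters (those arising from index-$2$ subgroups of $G$) this generic bound can be wasteful, and (A4) provides the sharper alternative $\frac{1-\delta}{\delta^2}(\delta-\eta)$. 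The maximum of the two bounds is precisely $1-\mathcal F(1/\kappa,\eta/\kappa)$, yielding the claimed main constant.

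The principal obstacle is this character-sum analysis in $\Sigma_3$: balancing the Cauchy--Schwarz bound from (A2) against the direct coset bound from (A4) so that the quadratic characters---the source of the genuine parity-type obstruction---are handled sharply enough to produce the constant $\mathcal F(1/\kappa,\eta/\kappa)$. Everything else, namely organising the three Buchstabs, applying the fundamental lemma on each sifted piece, and assembling the sieve remainders (via (A1)) into the displayed double integral, is technical but of a standard shape.
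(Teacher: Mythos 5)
Your overall strategy---reduce to products of exactly three primes $p_1p_2p_3$ with $p_j\in(x^{1/6},x^{1/3}]$ and then lower-bound the resulting triple convolution on $G$ by Fourier analysis (trivial character gives $\delta^3$, complex characters bounded by a root-of-unity estimate, quadratic characters controlled by (A4), all measured against Parseval)---is indeed the heart of the paper's argument, and the constant $\mathcal F(1/\kappa,\eta/\kappa)$ is assembled exactly as you describe. However, the sieve-theoretic framing you propose does not produce the stated negative term and main term, and here the paper does something genuinely sharper than what you wrote.

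The paper does \emph{not} iterate Buchstab three times and then invoke the fundamental lemma at $z=y^{1/6}$. That cannot work as stated: with level $D=y^{\theta(\rho)}$ and $z=y^{1/6}$ we have $s=\log D/\log z\approx 6\theta(\rho)\in[3,6]$, a \emph{fixed} number, so the ``fundamental lemma'' gives only the linear-sieve bracketing $f_1(s)VX\le S(\mathcal A^{(y)},z)\le F_1(s)VX$ with constants bounded away from $1$, not an asymptotic $|G|^{-1}V(y^{1/6})y/Y$. The losses from those fixed constants would destroy the delicate $e^\gamma\int\int$ error term and the $\frac{(\log 2)^3}{3}\mathcal F$ main term, which are at the edge of positivity in the application. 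What the paper actually uses (Lemma~\ref{le:lowerlinear}) is the \emph{exact} combinatorial identity $S(\mathcal A,z)=\sum_{d\mid P(z)}\lambda_d^{-}|\mathcal A_d|+\sum_{r\text{ even}}S_r$, followed by a one-step Buchstab on the Rosser--Iwaniec coefficients themselves and the linear sieve at the boundary point $s=2$, where $f_1(2)=0$ and $F_1(2)=e^\gamma$. The vanishing $f_1(2)=0$ is what makes the ``lower-sieve main term'' disappear cleanly, and $F_1(2)=e^\gamma$ is what produces exactly the double integral in \eqref{eq:FinalLowSpec}. Your Buchstab corrections $\Sigma_1,\Sigma_2,\Sigma_3$ also carry alternating signs, so you would need upper bounds for $\Sigma_1,\Sigma_3$ and a lower bound for $\Sigma_2$, whereas the identity approach gives you a \emph{positive} correction $S_2$ that you can simply keep; and your $\Sigma_3$ term $S(\mathcal A_{p_1p_2p_3},p_3)$ sums over $p_1p_2p_3n$ with $n>1$ as well, which breaks the identification $f(p_1p_2p_3n)=a\leftrightarrow f(p_1p_2p_3)=a$ that the Fourier argument needs. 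Finally, the paper never applies any sieve to the triple-prime sum: once $S_2$ is restricted to $p_1p_2p_3$ with all three in $(x^{1/6},x^{1/3}]$ (which forces $s=p_3$ automatically, no further sieving), (A2)--(A4) feed directly into Proposition~\ref{prop:Fouriersym}. You would do well to replace the three-Buchstab-plus-fundamental-lemma skeleton by the paper's exact linear-sieve identity; the Fourier half of your argument then goes through essentially as you have it.
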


\begin{remark}
The assumption~(A4) ensures that we do not claim to overcome the parity barrier, and the theorem would be false without an assumption of this form. If one e.g. chooses $\mathcal{N} = \mathbb{N}$, $G = \{-1, 1\},$ $f(n) = \lambda(n)$, and $a = 1$, then the left-hand side of~\eqref{eq:FinalLowSpec} vanishes but one can show that the condition~(A1) holds for $\theta(\rho) = 1 - \varepsilon$ for any $\varepsilon > 0$, that the condition~(A2) holds with $\kappa = 2+\varepsilon$ for any $\varepsilon > 0$, and (A3) holds. Once $\varepsilon$ is sufficiently small, the right hand side of~\eqref{eq:FinalLowSpec} is positive. However, the condition~(A4) of course does not hold. 
\end{remark}

\begin{remark} 
\label{rem:SpecSimpleMT}
Notice that the result is nontrivial only if the problem is genuinely one-dimensional in the sense that
\[
\prod_{\substack{p \leq x \\ p \in \mathcal{N}}} \left(1-\frac{h(p)}{p}\right) \ll \frac{Y}{|G|} \cdot \frac{1}{\log x}.
\]
Actually, in typical applications we have
\[
\prod_{\substack{p \leq y \\ p \in \mathcal{N}}} \left(1-\frac{h(p)}{p}\right) = (1+o(1))\frac{Y}{|G|} \cdot \frac{e^{-\gamma}}{\log y}
\]
whenever $y \geq x^{\varepsilon}$ and $\varepsilon > 0$,
and in this case~\eqref{eq:FinalLowSpec} simplifies to
\begin{align}
\label{eq:FinalLowSpecSimple}
\begin{aligned}
\sum_{\substack{x^{1/2} < p \leq x \\ p \in \mathcal{N}}} \frac{1_{f(p)=a}}{p} &\geq \frac{1}{|G|}\left(\frac{\log^3 2}{3} \mathcal{F}\left(\frac{1}{\kappa},\frac{\eta}{\kappa}\right) - \int_{1/2}^1 \int_{\theta(\rho)/2}^{1/2} \frac{2}{\theta(\rho)-\beta} \frac{\d\beta}{\beta} \frac{\d\rho}{\rho} + o(1) \right).
\end{aligned}
\end{align}
Note that then the constant $C$ in \eqref{eq:h(p)condition} is of size $C \gg |G|/Y$. Since $x$ is assumed to be sufficiently large in terms of $C$ in \eqref{eq:h(p)condition}, it is implicit in the hypotheses that $x$ is sufficiently large in terms of $|G|/Y$. While this imposes no restriction e.g. in the case of Linnik's problem (where we take $|G|/Y=\varphi(q)/q< 1$), this restriction on $x$ in terms of $|G|/Y$ could be relaxed by dealing with prime factors $p < x^{o(1)}$ separately by the fundamental lemma of the sieve, or by increasing the sieve dimension from $1$ to $1+o(1)$.
\end{remark}

\begin{remark}
The limitation $\kappa \in (2, 2+1/12)$ comes from Lemma~\ref{le:Smm=4} below and could be relaxed to $\kappa > 2$ if, in the definition of $\mathcal{F}(\delta, \eta)$ in~\eqref{eq:calSpecFdef}, we replaced $\frac{1}{4}\sqrt{1+(4\delta-1)^2}$ by $\max_{m \geq 3} S(\delta, m)$ with $S(\delta, m)$ as in~\eqref{eq:Sdelmdef} below. It would definitely be possible to obtain a better lower bound in case $\kappa \leq 2$. However, we always have $\kappa \geq 2$ when only type I information is available; for instance, for Linnik's problem~(A2) is deduced from the classical Brun--Titchmarsh bound
\begin{align*}
\sum_{\substack{p \leq y \\ p \equiv a \, (q)}} 1 \leq\frac{(2 + o(1)) x }{\varphi(q)\log(y/q)}.
\end{align*}

The condition (A3) could be relaxed by a constant, but this would affect the final lower bound~\eqref{eq:FinalLowSpec}.
\end{remark}

\subsubsection{The sieve theorem for short intervals} 

For $\delta_1, \delta_2, \delta_3 \in (0,1]$, we define 
\begin{align}
\label{eq:calShortsFdef}
\mathcal{F}^\sharp(\delta_1, \delta_2, \delta_3)\coloneqq  1 -\frac{|\sin (\pi \delta_2)|}{\pi \delta_1 \delta_2 \delta_3} \sqrt{(\delta_1-\delta_1^2)(\delta_3-\delta_3^2)}.
\end{align}
Theorem~\ref{thm:Primesinshorts} is a corollary of the following sieve theorem, which will be proved in Section~\ref{sec:shortspecialproof}. The deduction of Theorem~\ref{thm:Primesinshorts} from Theorem~\ref{thm:shorts-special} is easy and will be done in Section~\ref{ssec:PrimesInShorts}.

\begin{theorem} 
 \label{thm:shorts-special}
Let $1/2 < \theta < \alpha <1$, let $\kappa \colon [(\theta-1/2)/3, 1/2] \to (2, \infty)$ be a continuous decreasing function, let $Q \in \mathbb{N}$ and $K = Q^4!$ be such that $K = x^{o(1)}$. Let $f\colon \mathbb{N}\to \mathbb{R}_{\geq 0}$ and $h \colon \mathbb{N} \to \mathbb{R}_{\geq 0}$ be multiplicative. Let $x \geq 2$ and $Y >0$. Suppose that the following hold (where $o(1) \to 0$ when $x \to \infty$).

\begin{enumerate}[({A}1*)]
\item \label{hyp:lod-shorts} (Level of distribution) The sequence $(f(n) \mathbf{1}_{x-x^\alpha < n \leq x})_{n \leq x}$ has level of distribution $\theta$ with dimension $1$, size $x^\alpha /Y$ and density $h$.
\item \label{hyp:BT-shorts}(Brun--Titchmarsh estimate) Let $\Delta \coloneqq x^{\alpha-1}/(K+3)$. For any $y = x^\rho \in [x^{(\theta-1/2)/3}, x^{1/2}]$, we have 
\[
\sum_{\substack{(1-\Delta)y < p \leq y}} f(p) \leq \kappa(\rho) \frac{\Delta y}{\log y}.
\]
\item \label{hyp:long-ints}(Mass on long intervals) Whenever $\frac{\theta-1/2}{3} \leq \beta_1 \leq \beta_2 \leq \frac{1}{2}$ are such that $\beta_2 - \beta_1 \geq \frac{1}{2K^3}$, we have 
\[
\sum_{\substack{x^{\beta_1} < p \leq x^{\beta_2}}} \frac{f(p) \log p}{p} \geq (\beta_2-\beta_1+o(1)) \log x.
\]
\end{enumerate}
Then
\begin{align}
\label{eq:FinalLowShorts}
\begin{aligned}
&\sum_{\substack{x-x^\alpha < p \leq x}} f(p) \geq \frac{x^{\alpha}}{\log x} \int_{(\theta-1/2)/3}^{1/3} \int_{\max\{\alpha_2, \theta-3\alpha_2\}}^{\min\{1/2, 1-2\alpha_2\}} \frac{\mathcal{F}^\sharp(\frac{1}{\kappa(\alpha_1)}, \frac{1}{\kappa(\alpha_2)}, \frac{1}{\kappa(1-\alpha_1-\alpha_2)})}{\alpha_1 \alpha_2 (1-\alpha_1 - \alpha_2)} \d\alpha_1 \d\alpha_2 \\
&- \frac{e^{\gamma} x^\alpha}{Y} \int_{\theta/2}^{1/2} \prod_{\substack{p < x^{(\theta-\beta)/2}}} \left(1-\frac{h(p)}{p}\right)\frac{\d\beta}{\beta} + o\left(\frac{x^\alpha}{Y} \prod_{p < x} \left(1- \frac{h(p)}{p}\right)\right) + o_{Q \to \infty}\left(\frac{x^\alpha}{\log x}\right).
\end{aligned}
\end{align}
\end{theorem}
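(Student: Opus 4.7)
The plan is to combine an iterated Buchstab decomposition with a Type~III asymptotic evaluation driven by Cauchy--Schwarz and Brun--Titchmarsh.

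As a first step, I would write $\sum_{x-x^\alpha<p\leq x} f(p) = S(\mathcal{A},x^{1/2})$ up to a negligible contribution from prime powers, where $\mathcal{A}$ denotes the sifting sequence $(f(n)\mathbf{1}_{x-x^\alpha<n\leq x})_{n\leq x}$. Setting $z_0 = x^{(\theta-1/2)/3}$, three iterations of Buchstab's identity decompose
\[
S(\mathcal{A},x^{1/2}) = S(\mathcal{A},z_0) - \sum_{p_1} S(\mathcal{A}_{p_1},z_0) + \sum_{p_1>p_2} S(\mathcal{A}_{p_1p_2},z_0) - \sum_{p_1>p_2>p_3} S(\mathcal{A}_{p_1p_2p_3},p_3),
\]
where each Buchstab prime lies in $[z_0,x^{1/2})$. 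The first three ``Type~I'' sums sit below the level of distribution of~(A1*): the choice of $z_0$ is tuned so that $p_1p_2z_0^2 \leq x^\theta$ exactly on the complement of the region $\alpha_1+3\alpha_2\geq\theta$ (after relabelling). Applying the Jurkat--Richert lower and upper linear sieves at the fundamental-lemma boundary $s=2$ and summing dyadically over the smallest pulled-out prime $p_1\sim x^\beta$ with $\beta\in[\theta/2,1/2]$ produces both the sieve main term $x^\alpha Y^{-1}\prod_{p<x}(1-h(p)/p)$ (which ultimately cancels via reversing a Buchstab) and precisely the negative integral $\int_{\theta/2}^{1/2}\prod_{p<x^{(\theta-\beta)/2}}(1-h(p)/p)\,d\beta/\beta$ in~\eqref{eq:FinalLowShorts}.

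The substantive part of the proof is the triple-Buchstab tail $\sum S(\mathcal{A}_{p_1p_2p_3},p_3)$ in the region where Type~I sieving has been exhausted. After discretizing the scales $\alpha_i=\log_x p_i$ into $Q^{-2}$-boxes and labelling so that $\alpha_2$ is the smallest and $\alpha_3=1-\alpha_1-\alpha_2$, one is left with estimating sums
\[
T_{I_1,I_2,I_3} \coloneqq \sum_{\substack{p_i\in I_i\\ x-x^\alpha<p_1p_2p_3\leq x}} f(p_1)f(p_2)f(p_3),
\]
for boxes $I_i=(x^{\alpha_i},x^{\alpha_i+1/Q^2}]$. Since $K=Q^4!$ forces $1/Q^2\gg 1/(2K^3)$, hypothesis~(A3*) applies box-by-box and evaluates the ``expected'' contribution $(x^\alpha/x)\prod_i\sum_{p_i\in I_i}f(p_i)$ as a Riemann sum converging to the integrand $1/(\alpha_1\alpha_2(1-\alpha_1-\alpha_2))$ in~\eqref{eq:FinalLowShorts}. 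The discrepancy between $T_{I_1,I_2,I_3}$ and its expectation is then bounded by expanding $\mathbf{1}_{(x-x^\alpha,x]}(p_1p_2p_3)$ through a truncated multiplicative Fourier (Mellin) kernel at height $\asymp Q^3$ and applying Cauchy--Schwarz in the pair $(p_1,p_3)$ with $p_2$ as the pivot variable. The resulting second moments in $p_1$ and $p_3$ are controlled by~(A2*) on intervals of width $\Delta y$, yielding the variance factors $\sqrt{\delta_i(1-\delta_i)}$ with $\delta_i=1/\kappa(\alpha_i)$, while the Fej\'er-type kernel acting on the remaining Dirichlet polynomial in $p_2$ contributes the trigonometric factor $|\sin(\pi\delta_2)|/\pi$. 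Assembling these produces precisely the efficiency factor $\mathcal{F}^\sharp(\delta_1,\delta_2,\delta_3)$ from~\eqref{eq:calShortsFdef}.

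The main obstacle is executing the Type~III step uniformly in the parameter region while remaining elementary. One must coordinate four scales---the discretization width $1/Q^2$, the Mellin truncation $\asymp Q^3$, the Brun--Titchmarsh window $\Delta=x^{\alpha-1}/(K+3)$, and the (A3*) minimum length $1/(2K^3)$---so that every approximation error fits inside the $o_{Q\to\infty}(x^\alpha/\log x)$ slack. A secondary bookkeeping obstacle is aligning the Buchstab decomposition with the geometry of the integration region in~\eqref{eq:FinalLowShorts}: the lower bound $\alpha_1\geq\max\{\alpha_2,\theta-3\alpha_2\}$ encodes both the ordering of $p_2$ as the smallest prime and the Type~I exhaustion that forces the Cauchy--Schwarz step, while $\alpha_1\leq\min\{1/2,1-2\alpha_2\}$ encodes $p_1<x^{1/2}$ and $\alpha_3\geq\alpha_2$. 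Pinning down this matching is what makes the geometry of the integral in~\eqref{eq:FinalLowShorts} precise.
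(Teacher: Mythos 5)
Your high‑level analytic engine — reduce to a trilinear sum over primes $p_1p_2p_3$, discretise scales into short windows of width $\Delta$, and estimate the resulting triple convolution by isolating one factor in $L^\infty$ via a Brun--Titchmarsh/geometric-series bound and controlling the other two in $L^2$ via Cauchy--Schwarz and Parseval — is essentially what the paper does (the paper uses additive Fourier analysis on $\mathbb{Z}$, i.e.\ the circle method, rather than a Mellin kernel, but these are interchangeable after taking logarithms). Your reading of the geometry of the integration region and the role of (A2*), (A3*) is also sound. The problem is in the sieve step that sets this up.

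A plain threefold Buchstab iteration,
\begin{align*}
S(\mathcal{A},x^{1/2}) = S(\mathcal{A},z_0) - \sum_{p_1} S(\mathcal{A}_{p_1},z_0) + \sum_{p_1>p_2} S(\mathcal{A}_{p_1p_2},z_0) - \sum_{p_1>p_2>p_3} S(\mathcal{A}_{p_1p_2p_3},p_3),
\end{align*}
puts the triple term in with a \emph{negative} sign. To get a lower bound for $S(\mathcal{A},x^{1/2})$ you would therefore need an \emph{upper} bound for $\sum S(\mathcal{A}_{p_1p_2p_3},p_3)$; yet the triple-convolution estimate you describe, and the factor $\mathcal{F}^\sharp\leq 1$ that appears in the stated conclusion \eqref{eq:FinalLowShorts}, are \emph{lower}-bound devices. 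Feeding a lower bound into a subtracted term goes the wrong way; feeding an upper bound would instead produce a factor of the form $1+(\text{error})$, not $\mathcal{F}^\sharp = 1-(\text{error})$. There is also a second mismatch: $\sum S(\mathcal{A}_{p_1p_2p_3},p_3)$ is strictly larger than the sum over $n$ that are exact products of three primes (it also counts $n=p_1p_2p_3m$ with $m$ rough), so restricting to three-prime products is legal only when the term has a positive sign. The paper avoids both issues by not using naive Buchstab at all: it uses the exact linear sieve identity
\[
S(\mathcal{A},z) = \sum_{d\mid P(z)}\lambda_d^- |\mathcal{A}_d| + \sum_{r\text{ even}} S_r(\mathcal{A},D,z),
\]
where each $S_r$ is nonnegative precisely because the coefficients $\lambda_d^-$ are supported on the restricted set $\mathcal{D}^-$ (with the condition $p_1\cdots p_{m-1}p_m^3<D$ that you allude to as ``Type I exhaustion''). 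One keeps the nonnegative $S_2$, drops $S_r$ for $r\geq 4$, and then legitimately lower-bounds $S_2$ by the triple-prime sum. This sign structure is what makes the appearance of $\mathcal{F}^\sharp$ in \eqref{eq:FinalLowShorts} coherent.

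A related, smaller gap is the phrase ``sieve main term \dots\ which ultimately cancels via reversing a Buchstab.'' In the paper's Lemma~\ref{le:lowerlinear} there is nothing to cancel: the lower linear sieve is applied at $s=2$, where $f_1(2)=0$, so the Type I main term vanishes identically, and the negative integral $\int_{\theta/2}^{1/2}\prod_{p<x^{(\theta-\beta)/2}}(1-h(p)/p)\,\d\beta/\beta$ arises cleanly from the upper-sieve loss $F_1(2)=e^\gamma$ applied to the secondary-prime sum. Without the $\mathcal{D}^\pm$ support conditions, the Type I pieces in your expansion would not cancel so cleanly, and the ``reversing a Buchstab'' step would need to be made precise to close the argument.
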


\begin{remark} 
\label{rem:ShortsSimpleMT}
Similarly to Theorem~\ref{thm:non-torsion-special}, the result is nontrivial only if the problem is genuinely one-dimensional.  But now in typical applications we have
\[
\prod_{\substack{p \leq y}} \left(1-\frac{h(p)}{p}\right) = (1+o(1)) Y \frac{e^{-\gamma}}{\log y}
\]
whenever $y \geq x^\varepsilon$ and $\varepsilon > 0$, so that
\begin{align*}
\frac{e^\gamma x^\alpha}{Y} \int_{\theta/2}^{1/2} \prod_{\substack{p < x^{(\theta-\beta)/2}}} \left(1-\frac{h(p)}{p}\right)\frac{\d\beta}{\beta} = \frac{x^\alpha}{\log x} \left(\frac{2}{\theta}\log \frac{1}{2\theta-1} + o(1)\right).
\end{align*}
\end{remark}
\begin{remark}
The assumption (A3*) could be relaxed by a multiplicative constant, but this would affect the final lower bound~\eqref{eq:FinalLowShorts}.
\end{remark}
\begin{remark}
Theorem~\ref{thm:shorts-special} can be viewed as a sort of a non-compact generalization of Theorem~\ref{thm:non-torsion-special}, with the group being $(\R_{>0},\times)$ and the condition $f(p)=a$ replaced by $x-x^\alpha < p \leq x$. Then the analogue of the assumption (A4) of Theorem \ref{thm:non-torsion-special} is immediate since there are no subgroups of index 2 of $(\R_{>0},\times)$, which explains the apparent difference in the two results.
\end{remark}

\section{Discussion of the proofs}
\subsection{Proof of Theorem~\ref{thm:non-torsion-special}}
The proof of Theorem~\ref{thm:non-torsion-special} is based on combining a linear sieve identity with additive combinatorics. In the first step of the proof we use the linear sieve in a precise form to reduce the problem of lower bounding
\[
\sum_{\substack{x^{1/2} < p \leq x \\ p \in \mathcal{N}}} \frac{\mathbf{1}_{f(p) = a}}{p}
\] 
to the problem of finding a sufficiently good lower bound for the logarithmic number of  products of three primes
\[
\sum_{\substack{x^{1/6} < p_1, p_2, p_3 \leq x^{1/3} \\ p_1, p_2, p_3 \in \mathcal{N}}} \frac{\mathbf{1}_{f(p_1 p_2 p_3) = a}}{p_1 p_2 p_3}.
\] 
We note that a qualitative version of a statement like this follows from Bombieri's sieve which yields that, if the level of distribution function is $\theta(\rho) = 1-\varepsilon$ for a sufficiently small $\varepsilon$, then in order to find primes it suffices to find products of three primes (see e.g.~\cite[Theorem 16.1]{Opera}). 

Friedlander and Iwaniec used a similar idea in their proof of Linnik's theorem in~\cite[Chapter 24]{Opera}, reducing Linnik's problem to the study of products of five primes. Our main new innovation is introducing a new way to find a lower bound for the number of products of a fixed number of primes.

So far we have only used type I information (the level of distribution assumption (A1)). The crucial idea now is that we can use Fourier analysis to get a handle on products of three primes, making use of the trilinear structure. Defining $g\colon G \to \mathbb{R}_{\geq 0}$ by
\[
g(b) \coloneqq \frac{|G|}{\log 2} \cdot \sum_{\substack{x^{1/6} < p \leq x^{1/3} \\ p \in \mathcal{N}}} \frac{1_{f(p)=b}}{p},
\]
it suffices to find a good enough lower bound for the triple convolution
\begin{equation}
\label{eq:tripleConv}
(g \ast g \ast g)(a) \coloneqq \sum_{\substack{b_1, b_2, b_3 \in G\\ b_1 b_2 b_3 = a}} g(b_1) g(b_2) g(b_3).
\end{equation}
By the definition of $g$ and assumptions (A2)--(A4), we see that $g$ essentially takes values in $[0,\kappa]$, has mean value $1$ and, for every index $2$ subgroup $H\leq G$, has mean value at least $\eta$ in the coset $aH$. Normalizing by $1/\kappa$, we can morally think of $g$ as a normalized indicator function of a subset of $G$ with density $1/\kappa < 1/2$.

We will prove a lower bound for~\eqref{eq:tripleConv} using Fourier analysis in $G$, establishing in particular general bounds for the maximum size of complex Fourier coefficients of real-valued weight functions. We note that it would also be possible to use a popular version of Kneser's theorem due to Grynkiewicz \cite{gry} to obtain such a result, but the Fourier argument seems to produce numerically better results while also being more elementary.

Note that in contrast to Harman's prime-detecting sieve method~\cite{HarmanBook}, we only use lower bounds for (trilinear) type II sums rather than an asymptotic formula. On the other hand, we utilize type I information at various scales.

\subsection{Proof of Theorem~\ref{thm:shorts-special}}
Similarly as with Theorem~\ref{thm:non-torsion-special}, a precise form of the linear sieve allows us to reduce the task to proving a lower bound for a sum of products of three primes
\begin{align*}
\sum_{\substack{x^{1/6} < p_1, p_2, p_3 \leq x^{1/2} \\ x-x^\alpha < p_1p_2 p_3 \leq x}} f(p_1)f(p_2)f(p_3).
\end{align*}
After splitting each prime into long intervals (with $K_1 = K_3 = K$ and $K_2 = KQ$)
\begin{align*}
p_j \in (x^{\frac{r_j-1}{2K_j}},x^{\frac{r_j+1}{2K_j}}], \quad \frac{r_1}{2K_1} + \frac{r_2}{2K_2} +\frac{r_3}{2K_3} =1, 
\end{align*}
we can introduce logarithmic factors $\log p_j$ and the task is reduced to obtaining a correct order of magnitude lower bound for
\begin{align}
\label{eq:sketchp123claim}
\sum_{\substack{x-x^\alpha < p_1 p_2 p_3 \leq x \\ x^{\frac{r_j-1}{2K_j}} < p_j \leq x^{\frac{r_j+1}{2K_j}}}} \frac{f(p_1) \log p_1 f(p_2) \log p_2 f(p_3) \log p_3}{p_1 p_2 p_3}.
\end{align}

Recall $\Delta = x^{\alpha-1}/(K+3)$ and take $N = \frac{\log x}{\log(1+\Delta)}$. For $j = 1,2,3$, define $g_j \colon \mathbb{Z} \to \mathbb{R}_{\geq 0}$ by
\[
g_j(k) \coloneqq \frac{\mathbf{1}_{|k| < N/(2K_j)}}{\log(1+\Delta)} \sum_{\substack{x^{r_j/(2K_j)}(1+\Delta)^{k-1} < p_j \leq x^{r_j/(2K_j)} (1+\Delta)^{k} }} \frac{f(p_j) \log p_j}{p_j}.
\]
Then $k_1+k_2+k_3 \in \{-K+1, -K+2, \dotsc, 0\}$  implies (using $\frac{r_1}{2K_1} + \frac{r_1}{2K_2} +\frac{r_1}{2K_3} =1$) that for 
\[
p_j \in (x^{r_j/(2K_j)}(1+\Delta)^{k_j-1}, x^{r_j/(2K_j)} (1+\Delta)^{k_j}], \quad j=1,2,3,
\] 
we have $p_1p_2p_3 \in (x-x^\alpha, x]$. The task of lower bounding~\eqref{eq:sketchp123claim} is reduced to obtaining a lower bound for the triple convolution
\begin{align*}
\sum_{\ell \in \{-K+1, -K+2, \dotsc, 0\}} (g_1\ast g_2\ast g_3)(\ell) =\sum_{\substack{k_1 + k_2 + k_3 \in \{-K+1, \dotsc, 0\} \\ |k_j| \leq N/(2K_j)}} g_1(k_1)g_2(k_2)g_3(k_3)
\end{align*}
over the group $(\Z,+).$ 

By the assumption (A2*) the function $g_j(k)$ takes values in $[0,\kappa(\frac{r_j-1}{2K_j})]$ and by the assumption (A3*) has mean value $ \geq 1+o(1)$ on any interval $\mathcal{J} \subseteq [-N/(2K_j),N/(2K_j)]$ with $|\mathcal{J}| \geq N/K^3$. We obtain a lower bound for the triple convolution by using Fourier analysis on $(\Z,+)$ (i.e. the circle method). This group does have a subgroup of index 2, namely, $2\Z$, but since we are counting solutions to $k_1 + k_2 + k_3 \in \{-K+1, -K+2, \dotsc, 0\}$, we cover both cosets so that we do not need an assumption of the form (A4) here. Since $K=Q^4!$, also cosets of index $q \leq Q^4$ are covered, which gives a numerical improvement in the lower bound. We also took $K_2 = K Q$ large compared to $K_1=K_3=K$, so that $k_1+k_2$ cannot be a lot bigger than $k_3$, which helps us to avoid losses in the Fourier argument.

\subsection{Proof of Theorem \ref{thm_Linnik}}

Write $L = \Lv$. In order to deduce Theorem~\ref{thm_Linnik} from Theorem \ref{thm:non-torsion-special}, we take $\mathcal{N} = \{n \in \mathbb{N}\colon  (n, q) = 1\}$, $G=\mathbb{Z}_{q}^{\times}, f(n)=n\Mod q$, and $Y = q$. We shall apply Theorem \ref{thm:non-torsion-special} with $x = q^B$ for varying $B \in [L/2, L]$. 

With these choices, we will see that assumption (A1) holds with $\theta_B(\rho) = 1-1/(\rho B)-\varepsilon$ and $h(n) = \mathbf{1}_{(n, q)=1}$, and assumption (A2) holds with 
\[
\kappa_B = 2 + 2\frac{\log\left(\frac{B-3}{B-6}\right)}{\log 2} - \varepsilon.
\]
If also (A4) holds with an appropriate value of $\eta$ (i.e. for any index $2$ subgroup $H \leq G$ there is a fair proportion of primes in $[x^{1/6}, x^{1/3}]$ in $aH$), then Theorem \ref{thm:non-torsion-special} gives the desired conclusion.

We can then assume that for each $B \in [L/2,L]$ with $x = q^B$ there is a bias for some subgroup $H$. If this subgroup varies with $B$, then by continuity we can find some $B \in [L/2,L]$ such that there is bias for two different subgroups $H$. But this can be shown to be impossible due to orthogonality of characters.

We cannot rule out the scenario where the primes $p\in [x^{1/6}, x^{1/3}]$ are heavily biased in cosets of a subgroup of index $2$ (in particular, this happens if a Dirichlet $L$-function of modulus $q$ has a Siegel zero). However, if such a bias occurs for the same subgroup $H$ for all $x=q^B$ with $B \in [L/2,L]$, we can use a different sieve method to directly prove Linnik's theorem. Note that allowing $B$ to vary allows us to get a bias on a longer interval.

The fact that this scenario of (A4) not holding may be dealt with is natural from the fact that, under the existence of Siegel zeros, Linnik's theorem holds even with $L=2+o(1)$ by the work of Heath-Brown \cite{HB-Siegel-Linnik} (and one can even go beyond this assuming the existence of sufficiently strong Siegel zeros, as in the work of Friedlander and Iwaniec \cite{FI-Siegel}). The case of existence of Siegel zeros was handled separately using a different sieve method also in the work of Friedlander and Iwaniec~\cite[Chapter 24]{Opera} on Linnik's problem.

\begin{remark}
\label{rem:CompareLinnikProofs}
The classical proof of Linnik's theorem, presented e.g. in \cite[Chapter 18]{iw-kow}, requires three inputs on the zeros of Dirichlet $L$-functions: a zero-free region, zero repulsion, and a log-free zero density estimate. Our proof avoids all these ingredients --- the only property of $L$-functions used is the classical bound $L(1, \chi) \gg q^{-1/2}$ when $\chi \pmod{q}$ is a quadratic character. 

The proof of Friedlander and Iwaniec (presented in \cite[Chapter 24]{Opera} and in~\cite{FI1, FI2} with the value $L=75 \, 744 \, 000$) also avoided the use of zero density results and zero repulsion, but it does use the classical zero-free region for Dirichlet $L$-functions. The proof of Granville, Harper and Soundararajan \cite{GHS} avoided all three ingredients, but also probably leads to a rather large value of $L$.
\end{remark}

\subsection{Proof of Theorem~\ref{thm:Primesinshorts}}
\label{ssec:PrimesInShorts}
The deduction of Theorem~\ref{thm:Primesinshorts} from Theorem~\ref{thm:shorts-special} is easy and we do it already here.
\begin{proof}[Proof of Theorem~\ref{thm:Primesinshorts} assuming Theorem~\ref{thm:shorts-special}] Let $\varepsilon > 0$ be small.
We apply Theorem~\ref{thm:shorts-special} with 
\begin{align}
\label{eq:primeshortschoices}
f(n) = 1, \quad h(d)=1, \quad Y=1, \quad \theta=\alpha-\eps, \quad \text{and} \quad \kappa(\rho) = \frac{2\rho}{\rho-1+\alpha} + \eps.
\end{align}
Then the hypothesis (A1*) holds since trivially, for any $y,z > 0$ and $d \in \mathbb{N}$,
\begin{align} \label{eq:shortlevel}
\sum_{dn \in (y-z,y]} 1 = \frac{z}{d} + O(1).
\end{align}
Assumption (A2*) holds for $\kappa$ defined in~\eqref{eq:primeshortschoices} since by~\eqref{eq:shortlevel} the sequence $\{(1-\Delta)x^\rho < n \leq x^\rho\}_{n \leq x^\rho}$ has level of distribution $\frac{\rho-1+\alpha}{\rho}-\eps'$ for any $\eps'>0$, so that the upper bound follows from the linear sieve (Lemma~\ref{le:linear}), for instance. Finally, hypothesis (A3*) follows from Mertens' theorem $\sum_{p \leq z} \frac{\log p}{p} = \log z + O(1)$. Thus, by~\eqref{eq:FinalLowShorts} and Remark~\ref{rem:ShortsSimpleMT} we have
\begin{align*}
\sum_{\substack{x-x^\alpha < p \leq x}} 1 &\geq (\mathfrak{C}(\alpha) +o(1))\frac{x^{\alpha}}{\log x}
\end{align*}
with
\begin{align*}
\mathfrak{C}(\alpha) = \int_{(\theta-1/2)/3}^{1/3} \int_{\max\{\alpha_2, \theta-3\alpha_2\}}^{\min\{1/2, 1-2\alpha_2\}} \frac{\mathcal{F}^\sharp(\frac{1}{\kappa(\alpha_1)}, \frac{1}{\kappa(\alpha_2)}, \frac{1}{\kappa(1-\alpha_1-\alpha_2)})}{\alpha_1 \alpha_2 (1-\alpha_1 - \alpha_2)} \d\alpha_1 \d\alpha_2  - \frac{2}{\theta}\log \frac{1}{2\theta-1}.
\end{align*}
Computing the integral numerically\footnote{This and all other numerical calculations can be found in the Mathematica file which is provided as a supplement to the arXiv version of this paper.}, we see that $\mathfrak{C}(\alpha)> 1/40$ when $\alpha = \Hv$ and $\varepsilon$ is sufficiently small.
\end{proof}

\subsection{Further developments}
\label{ssec:future}
We mention here further applications that we are working on, which will appear elsewhere.

Theorem~\ref{thm:non-torsion-special} is stated in terms of the multiplicative function $f\colon  \mathcal{N} \to G$. It may be generalized so that products of three primes are directly related to a triple convolution in the group $G$. A generalization of Theorem~\ref{thm:non-torsion-special} will apply to give a Chebotarev density version of Linnik's theorem for an extension of number fields $L/K$, with the finite abelian group being $G= \mathrm{Gal}(L/K)$ (the case of a non-abelian extension may be reduced to the abelian case by a well known argument). 

Theorem~\ref{thm:shorts-special} applies to general multiplicative functions $f$, and we will apply it to the Fourier coefficients $|\lambda_\pi(n)|^2$ of a $GL_m$ automorphic representation along primes in short intervals $(x-x^\alpha, x]$.

In both of these applications we expect to get a good explicit numerical dependence on the conductor (of the relevant $L$-functions), but since we use sieve methods the dependence on the degree (of the relevant $L$-functions) is likely to be quite bad.

The more general version of Theorem~\ref{thm:non-torsion-special} will also give an elementary proof of a lower bound for Gaussian primes $\pi \in \Z[i], |\pi|^2 \leq x$, with argument in narrow sectors $\arg \pi \in [\theta,\theta+ x^{-\eps}]$. Here we use the compact group $G=\R/ 2 \pi \Z$ which can be approximated by finite abelian groups $\Z/N\Z$ with $N$ large.

We are also working on an extension of the sieve to replace the abelian group with conjugacy classes of a non-commutative group, to generalize both Theorems~\ref{thm:non-torsion-special} and~\ref{thm:shorts-special}. This would have applications to Sato--Tate laws.

Finally, it seems that by a more careful argument the proof of Theorem~\ref{thm:shorts-special} can be extended to give an asymptotic formula for the number of primes in the interval $(x-x^\alpha, x]$ for any $\alpha =1 - o(1)$, with an elementary proof. 

\section{Notation and preliminaries}

\subsection{Operations on abelian groups}

We shall need the notions of convolutions and Fourier transforms on a (multiplicatively written) abelian group $G$. For functions $f,g\colon G\to \mathbb{C}$, we define their convolution as
\begin{align*}
(f*g)(a)\coloneqq \sum_{\substack{a_1,a_2\in G\\a_1a_2=a}}f(a_1)g(a_2).   
\end{align*}

The elements of the character group $\widehat{G}$ of a finite abelian group $G$ are the group homomorphisms $\chi\colon G\to S^{1},$ where $S^1$ is the unit circle of the complex plane. One can show that $|\widehat{G}|=|G|$ and that we have the orthogonality relations
\begin{align*}
1_{g_1=g_2}=\frac{1}{|G|}\sum_{\chi\in \widehat{G}}\chi(g_1)\overline{\chi}(g_2),\quad \quad 
1_{\chi_1=\chi_2}=\frac{1}{|G|}\sum_{g\in G}\chi_1(g)\overline{\chi_2}(g).
\end{align*}
We say that $m\geq 1$ is the order of $\chi$ if $\chi^m\equiv 1$ and $\chi^k\not \equiv 1$ for all $1\leq k<m$. There exists a unique character $\chi_0$ whose order is $1$ and we say that $\chi_0$ is the trivial character.

For a function $f\colon G\to \mathbb{C}$, its Fourier transform $\widehat{f}\colon \widehat{G}\to \mathbb{C}$ is defined by
\begin{align*}
\widehat{f}(\chi)\coloneqq \mathbb{E}_{b \in G}f(b)\overline{\chi}(b). 
\end{align*}
With this normalization, Parseval's identity (which follows by expanding out the square and using the orthogonality relations) states that
\begin{align*}
\sum_{\chi\in \widehat{G}}|\widehat{f}(\chi)|^2=\mathbb{E}_{n\in G}|f(n)|^2.    
\end{align*}

We will use the following basic fact: for every subgroup $H$ of index 2 in $G$ there is a unique quadratic character $\psi \in \widehat{G}$ such that $\ker \psi = H$, that is, $\mathbf{1}_{b \in H} = \frac{1}{2}(1+\psi(b))$.

\subsection{Operations on \texorpdfstring{$\mathbb{Z}$}{Z}}
We shall need the notions of convolutions and Fourier transforms also on $\mathbb{Z}$. For clarity we shall in this set-up denote the Fourier transform by $\widecheck{f}$ instead of $\widehat{f}$.

For functions $f,g\colon \mathbb{Z} \to \mathbb{C}$ with finite support, we define their convolution as
\begin{align*}
(f*g)(a)\coloneqq \sum_{\substack{a_1,a_2\in \mathbb{Z}\\a_1+a_2=a}}f(a_1)g(a_2).   
\end{align*}
For a function $f\colon  \mathbb{Z} \to \mathbb{C}$ supported on $[-M, M]$, its Fourier transform $\widecheck{f}\colon  \mathbb{R} \to \mathbb{C}$ is defined by (the choice of $M$ will be clear from context)
\begin{align*}
\widecheck{f}(\theta)\coloneqq \mathbb{E}_{|b| \leq M} f(b)e(b\theta). 
\end{align*}
With this normalization, Parseval's identity states that
\begin{align*}
\int_0^1 |\widecheck{f}(\theta)|^2 \d\theta = \mathbb{E}_{|n| \leq M} |f(n)|^2.    
\end{align*}

\subsection{Sieve theory notation}
\label{ssec:Sieve}

We use the standard sieve notation (introduced e.g. in~\cite[Chapter 6]{Opera}). Thus, for a nonnegative sequence $(a_n)$ and $d\in \mathbb{N}$, write
\begin{align*}
 \mathcal{A}\coloneqq  (a_n)_n,\quad \mathcal{A}_d\coloneqq  (a_{dn})_{n},\quad |\mathcal{A}_d|\coloneqq  \sum_{n}a_{dn}.   
\end{align*}
Also, for $z\geq 1$ we write $P(z)=\prod_{p<z}p$ and 
\[
S(\mathcal{A},z) \coloneqq   \sum_{\substack{n\\(n,P(z))=1}} a_n.
\]

We shall use the linear sieve several times, so we next introduce the definitions and results we need. The linear sieve functions $f_1,F_1\colon \mathbb{R}_+\to \mathbb{R}$ are the unique functions such that $f_1$ is continuously differentiable on $(2,\infty)$, $F_1$ is continuously differentiable on $(1,\infty)$ and such that $f_1$ and $F_1$ satisfy the pair of delayed differential equations 
\begin{align*}
&\frac{\d}{\d s}(sF_1(s))=f_1(s-1),\quad s>3\\
&\frac{\d}{\d s}(sf_1(s))=F_1(s-1),\quad s>2    
\end{align*}
with the initial conditions $F_1(s)=\frac{2e^{\gamma}}{s}$ for $s\in (0,3]$ and $f_1(s)=0$ for $s\in (0,2]$. In particular $f_1(s) = \frac{2 e^\gamma}{s}\log(s-1)$ for $s \in [2, 4]$.

The upper and lower linear sieve coefficients $\lambda_{d}^{\pm}$ of level $D$ and sifting parameter $z$ are given by
\begin{align}
\label{eq:lamdpmdef}
\lambda_{d}^{\pm}=\mu(d)1_{d\in \mathcal{D}^{\pm}},    
\end{align}
where
\begin{align*}
\mathcal{D}^{+}=\{d\in \mathbb{N}:&\,\, d=p_1\cdots p_r\textnormal{ for some } p_r<\cdots <p_1<z\\
&\textnormal{ with } p_1\cdots p_{m-1}p_m^3<D\, \, \forall m\leq r,\, m \equiv 1\Mod{2}\},
\end{align*}
and $\mathcal{D}^{-}$ is defined analogously but with the congruence condition $m\equiv 0\Mod 2$ instead.

\begin{lemma}[Linear sieve] \label{le:linear}
Let $z\geq 2$ and $D=z^{s}$ with $s\geq 2$. Let $\lambda_d^{+}, \lambda_d^{-}$ be the upper and lower linear sieve coefficients, respectively, with level $D$ and sifting parameter $z$. 
\begin{enumerate}[(i)]
    \item For any $n\geq 1$, we have
    \begin{align*}
       \sum_{d\mid n}\lambda_d^{-}\leq  1_{(n, P(z))=1} \leq \sum_{d\mid n}\lambda_d^{+}.
    \end{align*}
    \item Let $h\colon \mathbb{N}\to \mathbb{R}_{\geq 0}$ be any multiplicative function satisfying $h(p)<p$ for every prime $p$ and such that for some fixed $C\geq 1$ and all $w_2\geq w_1\geq 2$ we have 
    \begin{align*}
    \prod_{w_1\leq p<w_2}\left(1-\frac{h(p)}{p}\right)^{-1}\leq \left(1+\frac{C}{\log w_1}\right)\frac{\log w_2}{\log w_1}.    
    \end{align*}
    Then
    \begin{align}
    \nonumber
     \sum_{d\mid P(z)}\lambda_d^{+}\frac{h(d)}{d}&\leq (F_1(s)+O((\log D)^{-1/6}))\prod_{p<z}\left(1-\frac{h(p)}{p}\right),\\ 
     \label{eq:lamb-}
     \sum_{d\mid P(z)}\lambda_d^{-}\frac{h(d)}{d}&\geq (f_1(s)+O((\log D)^{-1/6}))\prod_{p<z}\left(1-\frac{h(p)}{p}\right),  
    \end{align}
    where $f_1$ and $F_1$ are the linear sieve functions, defined above.
\end{enumerate}
\end{lemma}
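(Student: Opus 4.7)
The plan is to recognize this as the classical Rosser--Iwaniec linear sieve and to either cite it directly from \cite[Chapter 11]{Opera} or reproduce the standard proof in outline, which would fit with the elementary spirit of the footnote in Theorem~\ref{thm:Primesinshorts}. The result is entirely standard, so the goal is mostly to set up notation and highlight where the particular form of $\mathcal{D}^\pm$ (with the $p_1\cdots p_{m-1}p_m^3<D$ condition) enters. I would treat (i) and (ii) separately, since they are purely combinatorial and purely analytic, respectively.

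For part (i), the starting point is the Möbius identity $1_{(n,P(z))=1}=\sum_{d\mid (n,P(z))}\mu(d)$, and the task is to truncate this alternating sum while preserving the direction of inequality. Writing the prime factors of $(n,P(z))$ as $p_1>p_2>\cdots$ and parametrizing divisors by the ordered tuple of primes $d=p_{i_1}\cdots p_{i_k}$ with $i_1<\cdots<i_k$, I would argue by induction on $k$ (or by grouping terms according to the first index $m$ at which the defining condition of $\mathcal{D}^\pm$ first fails). The Buchstab identity $S(\mathcal{A},z)=S(\mathcal{A},w)-\sum_{w\leq p<z}S(\mathcal{A}_p,p)$ repeatedly unfolds the indicator, and the constraint $p_1\cdots p_{m-1}p_m^3<D$ for $m$ odd (resp.\ even) ensures that the dropped tail contributes with a definite nonnegative (resp.\ nonpositive) sign. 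This is precisely the combinatorial content of the Rosser construction.

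For part (ii), I would study $V^\pm(D,z)\coloneqq \sum_{d\mid P(z)}\lambda_d^\pm h(d)/d$ by decomposing $d$ according to its largest prime factor and writing a recursive relation of Buchstab type that expresses $V^\pm(D,z)$ in terms of sums of $V^\mp$ at smaller $z$ and cropped $D$. The density hypothesis on $h$ converts sums over primes $w\leq p<z$ into integrals $\int_w^z \frac{\d t}{t\log t}$ with error $O(1/\log w)$, and iterating through the recursion produces the delayed differential system satisfied by $F_1,f_1$. Matching the base cases at $s\in(0,3]$ for $F_1$ and $s\in(0,2]$ for $f_1$ then yields the asymptotics; the prefactor $\prod_{p<z}(1-h(p)/p)$ arises naturally from stripping off the leading Mertens-type product at each level of the iteration.

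The main obstacle is the quantitative error term $O((\log D)^{-1/6})$: one has to control the accumulated discretization error from converting each prime sum to an integral through all levels of the Buchstab iteration, which requires inserting the density condition \eqref{eq:h(p)condition} uniformly and tracking the dependence on $s$ carefully. Since the derivation is lengthy but well-documented, for a self-contained write-up I would refer the reader to \cite[Chapter 11]{Opera} (or Iwaniec's original paper), which gives a fully elementary treatment avoiding complex analysis, and simply note that the stated bounds are extracted verbatim from there with $\kappa=1$.
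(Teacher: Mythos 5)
Your proposal matches the paper's approach exactly: the paper's entire proof is the single line ``See e.g.\ \cite[Section 12.1]{Opera},'' so your plan to cite the standard Rosser--Iwaniec linear sieve from \emph{Opera de Cribro} is precisely what is done. Your accompanying outline of the combinatorial truncation for (i) and the Buchstab recursion for (ii) is a correct sketch of the standard argument; the only cosmetic discrepancy is that the paper points to Section 12.1 (where the linear sieve in this $\lambda_d^\pm$ notation is stated) rather than Chapter 11.
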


\begin{proof}
See e.g. \cite[Section 12.1]{Opera}.
\end{proof}

\section{The sieve}

We shall use a consequence of an exact form of the linear sieve, claiming back the nonnegative terms $S_r$ that the linear sieve throws away. A similar idea was used by Friedlander and Iwaniec in their proof of Linnik's theorem in~\cite[Chapter 24]{Opera}.

In the following lemma and later we write, for a sequence $\mathcal{B} = (b_n)_{n \in \mathbb{N}},$ parameters $D, w > 1$, and $r \in \mathbb{N}$,
\[
S_r(\mathcal{B}, D, w) \coloneqq   \sum_{\substack{p_r < \dotsb < p_1 < w \\ p_1 \dotsm p_m p_m^2 < D \text{ for $m < r, m \equiv r \Mod{2}$} \\ p_1 \dotsm p_r p_r^2 \geq D}} S(\mathcal{B}_{p_1 \dotsm p_r}, p_r).
\]

\begin{lemma} \label{le:lowerlinear}
Let $x \geq 2$, let $\mathcal{A} = (a_n)_{n \leq x}$ be a sequence of nonnegative real numbers that has level of distribution $\theta \in (0, 1]$ with dimension $1$, size $X > 0$ and density $h \colon \mathbb{N} \to \mathbb{R}_{\geq 0}$. Write $D = x^\theta$ and let $z = x^u$ with $u \in [\theta/2, \theta]$. Then
\begin{align*}
S(\mathcal{A}, z) &\geq - X e^{\gamma} \int_{\theta/2}^u \prod_{p < x^{(\theta-\beta)/2}} \left(1-\frac{h(p)}{p}\right)\frac{\d\beta}{\beta} + \sum_{r \text{\emph{ even}}} S_r(\mathcal{A}, D, z) + o\left(X \prod_{p < x} \left(1-\frac{h(p)}{p}\right)\right).
\end{align*}
\end{lemma}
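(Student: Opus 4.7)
The plan is to use Buchstab's identity to drop the sifting parameter to the critical level $w = x^{\theta/2}$, where the linear sieve parameter equals $s = \theta/(\theta/2) = 2$, the boundary where $f_1(2) = 0$ and $F_1(2) = e^\gamma$. Writing
\begin{align*}
S(\mathcal{A}, z) = S(\mathcal{A}, w) - \sum_{w \leq p < z} S(\mathcal{A}_p, \sqrt{D/p}) + \sum_{w \leq p < z} \sum_{\sqrt{D/p} \leq q < p} S(\mathcal{A}_{pq}, q),
\end{align*}
which is valid since $p \geq w = D^{1/2} > D^{1/3}$ ensures $\sqrt{D/p} < p$, I would then invoke the exact Rosser--Iwaniec identities (implicit in the construction underlying Lemma~\ref{le:linear}) at each remaining sifting point.

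For the first term, the exact lower-sieve identity reads
\begin{align*}
S(\mathcal{A}, w) = \sum_{d \mid P(w)} \lambda_d^- |\mathcal{A}_d| + \sum_{r \text{ even}} S_r(\mathcal{A}, D, w),
\end{align*}
with $\lambda_d^-$ of level $D$ and sifting parameter $w$. The level of distribution hypothesis replaces $|\mathcal{A}_d|$ by $h(d) X/d$ at total cost $o(X\prod_{p<x}(1-h(p)/p))$, and Lemma~\ref{le:linear}(ii) at $s=2$ yields $\sum_d \lambda_d^- h(d)/d \geq -o(\prod_{p<w}(1-h(p)/p))$; by~\eqref{eq:h(p)condition} one has $\prod_{p<w}(1-h(p)/p) \asymp \prod_{p<x}(1-h(p)/p)$, so this too is absorbed into the main error. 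For the middle sum, the exact upper-sieve identity at level $D/p$ and sifting $\sqrt{D/p}$ reads
\begin{align*}
S(\mathcal{A}_p, \sqrt{D/p}) = \sum_{d \mid P(\sqrt{D/p})} \lambda_d^+ |\mathcal{A}_{pd}| - \sum_{r \text{ odd}} S_r\!\left(\mathcal{A}_p, D/p, \sqrt{D/p}\right),
\end{align*}
and Lemma~\ref{le:linear}(ii) gives $\sum_d \lambda_d^+ h(d)/d \leq (e^\gamma + o(1))\prod_{p' < \sqrt{D/p}}(1-h(p')/p')$. Summing over $p = x^\beta$ and applying partial summation based on~\eqref{eq:h(p)condition}, which converts $\sum_p (h(p)/p)\phi(\beta(p))$ asymptotically into $\int \phi(\beta) \frac{\d\beta}{\beta}$, produces the integral $Xe^\gamma \int_{\theta/2}^u \prod_{p<x^{(\theta-\beta)/2}}(1-h(p)/p) \frac{\d\beta}{\beta}$. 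The accumulated remainder errors collapse to $o(X\prod_{p<x}(1-h(p)/p))$ because every squarefree $e \leq D$ admits at most $O(\log D/\log w) = O(1)$ factorizations $e = pd$ with $p \in [w, z)$.

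The hard part will be the combinatorial reassembly of the nonnegative $S_r$ remainders. After the substitutions above, the lower bound contains $\sum_{r \text{ even}} S_r(\mathcal{A}, D, w)$ together with the positive pieces $\sum_{w \leq p < z} \sum_{\sqrt{D/p} \leq q < p} S(\mathcal{A}_{pq}, q)$ from the outer Buchstab and $\sum_{w \leq p < z} \sum_{r \text{ odd}} S_r(\mathcal{A}_p, D/p, \sqrt{D/p})$ from the upper-sieve identity. Relabeling $p_1 = p$ and $p_{j+1} = q_j$ inside each $S_{r-1}$ remainder transforms its Rosser constraints ``$q_1 \cdots q_{m-1} q_m^3 < D/p$ for odd $m < r-1$, and $q_1 \cdots q_{r-2} q_{r-1}^3 \geq D/p$'' exactly into the defining constraints of $S_r(\mathcal{A}, D, z)$ with $p_1 \in [w, z)$. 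The key observation is that for $p_1 \geq w > D^{1/4}$ the constraint $p_1 p_2^3 < D$ forces $p_2 < (D/p_1)^{1/3} < \sqrt{D/p_1} < p_1$, so the sifting ceiling $\sqrt{D/p_1}$ inside $S_{r-1}$ is automatic for every $r \geq 4$ and
\begin{align*}
\sum_{w \leq p < z} S_{r-1}\!\left(\mathcal{A}_p, D/p, \sqrt{D/p}\right) = S_r(\mathcal{A}, D, z) - S_r(\mathcal{A}, D, w) \qquad (r \geq 4 \text{ even}).
\end{align*}
For the exceptional case $r = 2$ the inner $S_1$ sum covers $p_2 \in [(D/p_1)^{1/3}, \sqrt{D/p_1})$ while the outer Buchstab pair sum covers $p_2 \in [\sqrt{D/p_1}, p_1)$, together supplying $S_2(\mathcal{A}, D, z) - S_2(\mathcal{A}, D, w)$. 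Telescoping $\sum_r S_r(\mathcal{A}, D, w) + \sum_r (S_r(\mathcal{A}, D, z) - S_r(\mathcal{A}, D, w)) = \sum_r S_r(\mathcal{A}, D, z)$ then delivers the claimed inequality.
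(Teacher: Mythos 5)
Your proof is correct, and it rests on the same analytic core as the paper's (exact linear-sieve identities, the values $f_1(2)=0$ and $F_1(2)=e^{\gamma}$, and the Mertens-type consequence of \eqref{eq:h(p)condition} to pass from the sum over $p$ to the integral), but the combinatorial organization is genuinely different. The paper applies the exact lower-sieve identity once, at sifting parameter $z$, so the remainder $\sum_{r\text{ even}}S_r(\mathcal{A},D,z)$ is present intact from the first line; the work then goes into decomposing $\sum_{d\mid P(z)}\lambda_d^-|\mathcal{A}_d|$ according to the largest prime factor $p\in(D^{1/2},z)$ of $d$, using that the lower coefficients with top prime $p$ are minus the upper coefficients of level $D/p$, together with a support argument allowing the restriction to $d\mid P((D/p)^{1/2})$. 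You instead perform two Buchstab steps first (down to $w=D^{1/2}$, then to $\sqrt{D/p}$), apply the exact lower identity at $w$ and the exact upper identity at level $D/p$ with sifting parameter $\sqrt{D/p}$, and must then reassemble the leftover pieces --- $\sum_{r\text{ even}}S_r(\mathcal{A},D,w)$, the odd remainders $S_{r-1}(\mathcal{A}_p,D/p,\sqrt{D/p})$, and the Buchstab double sum --- into $\sum_{r\text{ even}}S_r(\mathcal{A},D,z)$. That reassembly is the one step the paper does not need, and it does check out: for even $r\geq 4$ the chain condition $p_1p_2^3<D$ together with $p_1\geq D^{1/2}$ makes the ceiling $p_2<\sqrt{D/p_1}<p_1$ automatic, so relabelling gives exactly $S_r(\mathcal{A},D,z)-S_r(\mathcal{A},D,w)$, while for $r=2$ the ranges $[(D/p_1)^{1/3},\sqrt{D/p_1})$ from $S_1$ and $[\sqrt{D/p_1},p_1)$ from Buchstab are disjoint with union $[(D/p_1)^{1/3},p_1)$, which is precisely the $S_2$ difference. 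The paper's route avoids all remainder bookkeeping (the $S_r$ at parameter $z$ never move); your route avoids the coefficient-level identity and its support restriction, replacing them by the more transparent Buchstab recursion at the cost of the relabelling argument, which you carried out correctly. The analytic steps (level of distribution applied to the squarefree moduli $pd<D$ with $O(1)$ multiplicity, and partial summation to reach the integral) coincide with the paper's.
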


\begin{proof}
Let $\lambda_d^-$ be as in~\eqref{eq:lamdpmdef} with $z$ and $D$ as in the statement of Lemma~\ref{le:lowerlinear}. By definitions we have (see e.g.~\cite[Section 12.1]{Opera})
\[
S(\mathcal{A}, z) = \sum_{d \mid P(z)} \lambda_d^- |\mathcal{A}_d| + \sum_{r \text{ even}} S_r(\mathcal{A}, D, z).
\]
Let $\lambda_{p, d}^+$ be as in~\eqref{eq:lamdpmdef} but with $p$ in place of $z$ and $D/p$ in place of $D$. Then by definitions
\begin{align*}
\sum_{d \mid P(z)} \lambda_d^- |\mathcal{A}_d| &= \sum_{d \mid P(D^{1/2})} \lambda_d^- |\mathcal{A}_d| + \sum_{D^{1/2} < p < z} \sum_{d \mid P(p)} \lambda_{dp}^- |\mathcal{A}_{dp}| \\
&= \sum_{d \mid P(D^{1/2})} \lambda_d^- |\mathcal{A}_d| - \sum_{D^{1/2} < p < z} \sum_{d \mid P(p)} \lambda_{p, d}^+ |\mathcal{A}_{dp}|.
\end{align*}
Now consider the contribution to the second term on the right-hand side of those $d$ which have a prime factor $\in [(D/p)^{1/2}, p)$. Let $p_0$ be largest such factor. Looking at the definition of $\lambda_{p,d}^+$ and taking $n = 1$ there, we see that we must have $p_0^3 < D/p$ which is actually not possible. Hence we have  
\begin{align*}
S(\mathcal{A}, z) &= \sum_{d \mid P(D^{1/2})} \lambda_d^- |\mathcal{A}_d| - \sum_{D^{1/2} < p < z} \sum_{d \mid P((D/p)^{1/2})} \lambda_{p, d}^+ |\mathcal{A}_{dp}| + \sum_{r \text{ even}} S_r(\mathcal{A}, D, z) \\
&=\sum_{d \mid P(D^{1/2})} \lambda_d^- X \frac{h(d)}{d} - \sum_{D^{1/2} < p < z} \sum_{d \mid P((D/p)^{1/2})} \lambda_{p, d}^+ X \frac{h(dp)}{dp} + \sum_{r \text{ even}} S_r(\mathcal{A}, D, z) \\
&\qquad + O\left(\sum_{d\leq D} \mu^2(d) \left||\mathcal{A}_d| - \frac{h(d)}{d}X\right|\right).
\end{align*}
The error term can be dealt with the level of distribution assumption. By the linear sieve lower and upper bounds (Lemma~\ref{le:linear}(ii) with $s = 2$) we obtain
\begin{align*}
S(\mathcal{A}, z) &\geq (f_1(2)+o(1)) X \prod_{p < D^{1/2}} \left(1-\frac{h(p)}{p}\right) \\
& \qquad - X \sum_{D^{1/2} < p < z} \frac{h(p)}{p} (F_1(2)+o(1)) \prod_{p' < (D/p)^{1/2}} \left(1-\frac{h(p')}{p'}\right) + \sum_{r \text{ even}} S_r(\mathcal{A}, D, z) .
\end{align*}
The claim follows since $f_1(2) = 0$ and $F_1(2) = e^{\gamma}$, and since the assumption on $h$ implies that for any $w_2\geq w_1\geq 2$ we have
\begin{align*}
\sum_{w_1\leq p<w_2}\frac{h(p)}{p}\leq \log \frac{\log w_2}{\log w_1}+O\left(\frac{1}{\log w_1}\right).
\end{align*}
\end{proof}

\section{Linear combinations of roots of unity}
We will bound some Fourier transforms appearing later in Sections~\ref{sec:tripleconvolutionZ} and~\ref{sec: convolution} by using the following lemma.
\begin{lemma}
\label{le:LinCombRootsOfUnity} Let $\delta \in [0, 1]$ and $m \geq 3$. For each $b \in \{0, 1, \dotsc, m-1\}$, let $w_b \in [0, 1/m]$. Assume that $\sum_{b =0}^{m-1} w_b = \delta$. Then
\begin{align*}
 \left|\sum_{b=0}^{m-1} e\left(\frac{b}{m}\right) w_b \right| \leq S(\delta, m),
\end{align*}
where 
\begin{align}
\label{eq:Sdelmdef}
S(\delta, m) \coloneqq   \left|\frac{\sin\left(\pi \frac{\left\lfloor \delta m \right\rfloor}{m}\right)}{m \sin\left(\frac{\pi}{m}\right)} + \left(\delta-\frac{\lfloor \delta m\rfloor}{m}\right) e\left(\frac{\lfloor \delta m \rfloor+1}{2m} \right)\right|.
\end{align}
\end{lemma}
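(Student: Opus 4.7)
The plan is to view the right-hand side as the maximum of $|z(w)| := \bigl|\sum_{b=0}^{m-1} e(b/m) w_b\bigr|$ over the polytope $P \coloneqq \{w \in [0,1/m]^m : \sum_b w_b = \delta\}$. Since $z(w)$ is linear in $w$, the function $|z(w)|$ is convex on $P$, and thus attains its maximum at an extreme point. Each extreme point of $P$ has all but at most one coordinate in $\{0, 1/m\}$, so takes the form $w = \tfrac{1}{m}\mathbf{1}_A + \alpha\,\mathbf{1}_{\{b_0\}}$ with $|A| = k \coloneqq \lfloor \delta m\rfloor$, $b_0 \notin A$, and $\alpha \coloneqq \delta - k/m \in [0,1/m]$.

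The key step is a projection--duality argument that pins down the structure of the optimizer. Let $w^*$ be an optimizer and set $\phi^* \coloneqq \arg z(w^*)/(2\pi)$; then $|z(w^*)| = \Re\bigl(e(-\phi^*) z(w^*)\bigr) = \sum_b w^*_b \cos\bigl(2\pi(b/m - \phi^*)\bigr)$. For any other $w \in P$ one has $|z(w^*)| \geq |z(w)| \geq \Re\bigl(e(-\phi^*) z(w)\bigr) = \sum_b w_b \cos(2\pi(b/m-\phi^*))$, so $w^*$ solves the linear program of maximizing $\sum_b w_b \cos(2\pi(b/m - \phi^*))$ over $P$. This LP places weight $1/m$ on the $k$ indices with largest $\cos(2\pi(b/m-\phi^*))$ and weight $\alpha$ on the $(k+1)$-th largest. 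Ordering $b$ by the distance from $b/m$ to $\phi^*$ in $\mathbb{R}/\mathbb{Z}$, the chosen $k+1$ indices are those closest to $\phi^*$, and hence form a consecutive arc in $\mathbb{Z}/m\mathbb{Z}$ with $b_0$ at one of its two endpoints.

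Using the rotational invariance of $|z|$ under $b\mapsto b+c\pmod m$, I normalize so that $A = \{0,1,\dotsc,k-1\}$ and $b_0 = k$. The identity $\sum_{b=0}^{k-1} e(b/m) = e\bigl((k-1)/(2m)\bigr)\sin(\pi k/m)/\sin(\pi/m)$ then yields
\[
z(w^*) \;=\; e\bigl((k-1)/(2m)\bigr)\left[\frac{\sin(\pi k/m)}{m\sin(\pi/m)} + \alpha\, e\bigl((k+1)/(2m)\bigr)\right],
\]
whose modulus is exactly $S(\delta,m)$. The main obstacle is the duality/LP step, specifically the claim that the $k$ indices maximizing $\cos(2\pi(b/m-\phi^*))$ form a consecutive arc in $\mathbb{Z}/m\mathbb{Z}$. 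This follows from the unimodality of $t\mapsto \cos(2\pi t)$ on a fundamental domain, but when $\phi^*$ lies at a half-integer multiple of $1/m$ one has ties at the endpoints of the arc that need to be handled with some care; once this is done, the remaining calculation with roots of unity is mechanical.
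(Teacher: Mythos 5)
Your proposal is correct and takes essentially the same approach as the paper's proof. The paper likewise fixes a rotation $\theta$ so that the absolute value becomes the real linear functional $\sum_b w_b\cos(2\pi(\theta+b/m))$, observes that the $k$ largest values of that cosine lie on a consecutive arc with the $(k+1)$-st at an endpoint, and then performs explicit weight-shifting transformations to concentrate the mass there before summing the geometric series; your LP-duality/extreme-point phrasing is a reformulation of exactly this exchange argument, and your concluding root-of-unity computation matches the paper's.
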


\begin{proof}
This is similar to~\cite[Lemma 5.9]{tera-polynomial}; for the sake of completeness we give details. Note that there is some real number $\theta$ such that 
\begin{align*}
 \left|\sum_{b=0}^{m-1} e\left(\frac{b}{m}\right)w_b\right|= \textnormal{Re}\left(e(\theta)\sum_{b=0}^{m-1} e\left(\frac{b}{m}\right)w_b\right)=\sum_{b=0}^{m-1}w_b\cos\left(2\pi\left(\theta+\frac{b}{m}\right)\right)\coloneqq \sum_{b=0}^{m-1}w_bx_b.  
\end{align*}
Let $k = \lfloor \delta m \rfloor$. Then the largest $k$ values of $x_b$ are attained for $b\in \mathcal{J}\pmod m$, where $\mathcal{J}=\{a+1,\ldots, a+k\}$ is some interval of length $k$. Moreover, the $k+1$st largest value is either $x_a$ or $x_{a+k+1}$; suppose for example that it is $x_{a+k+1}$.  Note that since $w_j\in [0,1/m]$,  the quantity $\sum_{b=0}^{m-1}w_bx_b$ does not decrease if for indices $j,r$ satisfying $x_j\geq x_r$ we repeatedly make transformations of the form $w_j\mapsto w_j+c$, $w_r\mapsto w_r-c$, with $0\leq c\leq \min\{1/m-w_j,w_r\}$. After performing finitely many such transformations, we see that
\begin{align*}
 \sum_{b=0}^{m-1}w_bx_b&\leq \sum_{b\in \mathcal{J}}\frac{1}{m}x_b+\left(\delta-\frac{k}{m}\right)x_{a+k+1}\\
 &\leq  \left|\sum_{b=a+1}^{a+k} \frac{1}{m} e\left(\frac{b}{m}\right)+\left(\delta-\frac{k}{m}\right)e\left(\frac{a+k+1}{m}\right)\right|\\ 
 &=\left|\sum_{0\leq b < \lfloor \delta m\rfloor}\frac{1}{m}e\left(\frac{b}{m}\right)+\left(\delta-\frac{\lfloor \delta m\rfloor}{m}\right)e\left(\frac{\lfloor \delta m\rfloor}{m}\right)\right|.
\end{align*}
Summing the geometric series, we see that the last expression is
\begin{equation}
\label{eq:gupSfirst}
\left|\frac{e\left(\frac{\lfloor \delta m\rfloor}{m}\right)-1}{m(e\left(\frac{1}{m}\right)-1)}+\left(\delta-\frac{\lfloor \delta m\rfloor}{m}\right)e\left(\frac{\lfloor \delta m\rfloor}{m}\right)\right|.
\end{equation}

Since for any $\beta \in \mathbb{R}$ we have
\[
e(\beta)-1 = e\left(\frac{\beta}{2}\right)\left(e\left(\frac{\beta}{2}\right) -  e\left(-\frac{\beta}{2}\right)\right) = e\left(\frac{\beta}{2}\right) \cdot 2i \cdot \sin(\pi \beta),
\]
the claim follows by applying this to both the numerator and the denominator of the first term inside the absolute values in~\eqref{eq:gupSfirst} (multiplying also both sides by $1 = |e(\frac{-\lfloor \delta m \rfloor+1}{2m})|$).
\end{proof}

From this and the Taylor approximation of the sine function, we immediately  get the following asymptotic formula for $S(\delta, m)$.
\begin{lemma} \label{le:Smlimit} Let $\delta\in [0, 1]$, and let $S(\delta, m)$ is as in~\eqref{eq:Sdelmdef}. Then for $m \to \infty$
\begin{align*}
 S(\delta, m) =  \frac{\sin(\pi \delta)}{\pi } + O\left(\frac{1}{m}\right). 
\end{align*}
\end{lemma}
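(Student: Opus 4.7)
The plan is a direct Taylor expansion of the two summands inside the absolute value in~\eqref{eq:Sdelmdef}, followed by a short argument to remove the absolute value. There is no serious obstacle; essentially everything follows from $\sin(\pi/m) = \pi/m + O(1/m^3)$ together with the Lipschitz continuity of $\sin$. The only mildly subtle step is the final removal of the outer absolute value, which relies crucially on the fact that the main term is a nonnegative real number on $\delta\in[0,1]$.

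First I would handle the first summand. By the Taylor expansion of sine, $m\sin(\pi/m) = \pi + O(1/m^2)$, so
\[
\frac{1}{m\sin(\pi/m)} = \frac{1}{\pi} + O(1/m^2).
\]
For the numerator, since $|\delta - \lfloor \delta m\rfloor/m| \leq 1/m$ and $\sin(\pi\,\cdot\,)$ is $\pi$-Lipschitz, one has $\sin(\pi \lfloor \delta m\rfloor/m) = \sin(\pi \delta) + O(1/m)$. Multiplying (and using $|\sin(\pi\delta)|\leq 1$ to bound cross-terms) the first summand equals
\[
\frac{\sin(\pi\delta)}{\pi} + O(1/m).
\]

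For the second summand, $|e((\lfloor\delta m\rfloor+1)/(2m))|=1$ and $0 \leq \delta - \lfloor \delta m\rfloor/m \leq 1/m$, so this contributes some complex number $E_2$ of modulus $O(1/m)$. Hence the expression inside the absolute values in~\eqref{eq:Sdelmdef} is of the form $\sin(\pi\delta)/\pi + E$ with $E\in\mathbb{C}$, $|E|\ll 1/m$. Since $\sin(\pi\delta)/\pi \geq 0$ for $\delta\in[0,1]$, the triangle and reverse triangle inequalities give
\[
\left|\frac{\sin(\pi\delta)}{\pi} + E\right| = \frac{\sin(\pi\delta)}{\pi} + O(|E|) = \frac{\sin(\pi\delta)}{\pi} + O(1/m),
\]
which is the desired identity. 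As noted, the only step to be slightly careful about is this last one, where the nonnegativity of $\sin(\pi\delta)$ on $[0,1]$ is what allows us to drop the absolute value without introducing a main-term error.
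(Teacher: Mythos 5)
Your proof is correct and takes the same approach the paper indicates: the paper gives no proof beyond the single remark that the lemma ``immediately'' follows from the Taylor approximation of the sine function, and your argument simply fills in those details (expanding $m\sin(\pi/m)$, bounding the second summand by $1/m$, and using the nonnegativity of $\sin(\pi\delta)$ on $[0,1]$ to drop the outer absolute value).
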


\section{A triple convolution estimate in \texorpdfstring{$\mathbb{Z}$}{Z}}\label{sec:tripleconvolutionZ}
\begin{proposition} \label{prop:tripconvZ}
Let $\delta_1, \delta_2, \delta_3 \in (0,1/2)$ be fixed and let $M \in \mathbb{N}$. Let $K=Q_0!$ for some $Q_0 \in \mathbb{N}$ such that $K \leq M^{1/4}$, let $\nu \coloneqq   Q_0^{-1/2}$, and let $\beta \in (Q_0^{-1/4}, 1]$. 

For $j=1, 2, 3$, let $h_j\colon  \mathbb{Z} \to [0,1]$ be such that $h_2$ is supported on $[-\beta M,\beta M]$ and $h_1, h_3$ are supported on $[-M,M]$. 

Assume also that
\begin{align} \label{eq:h23assump}
\E_{k \in (r\nu^3 M, (r+1)\nu^3M]} h_2(k) = \delta_2+O(\nu^2)
\end{align}
whenever $|r| \leq \beta \nu^{-3}-1$ and, for $j \in \{1, 3\}$,
\begin{align} \label{eq:h1assump}
\E_{k \in (r\nu^3 M, (r+1)\nu^3M]} h_j(k) = \delta_j+O(\nu^2),
\end{align}
whenever $|r| \leq \nu^{-3}-1$.
Then
\begin{align*}
&\sum_{\ell  \in \{0,1,\dots,K-1\}}(h_1 \ast h_2 \ast h_3) (\ell)  \\
&\geq \beta K (2M+1)^2 \left(\delta_1 \delta_2 \delta_3 \left(1-\frac{\beta}{4}\right) -\frac{\sin(\pi \delta_2)}{\pi}\sqrt{(\delta_1-\delta_1^2)(\delta_3-\delta_3^2)} + O(Q_0^{-1/4})\right). 
\end{align*}
\end{proposition}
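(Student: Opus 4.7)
The plan is to apply Fourier analysis on $\mathbb{Z}$: writing $\Sigma = \sum_{\ell=0}^{K-1}(h_1 * h_2 * h_3)(\ell) = \int_0^1 F_1(\theta) F_2(\theta) F_3(\theta) \widetilde D_K(\theta)\, d\theta$, where $F_j(\theta) = \sum_k h_j(k) e(k\theta)$ and $\widetilde D_K(\theta) = \sum_{\ell=0}^{K-1} e(-\ell\theta)$. I would decompose $h_j = \delta_j \mathbf{1}_{[-M, M]} + e_j$ for $j = 1, 3$, and split $\Sigma = \Sigma_{\mathrm{main}} + \mathcal{E}_1 + \mathcal{E}_2 + \mathcal{E}_3$, where $\Sigma_{\mathrm{main}} = \delta_1 \delta_3 \int I_1 F_2 I_3 \widetilde D_K\,d\theta$ (with $I_j$ the Fourier transform of $\mathbf{1}_{[-M, M]}$), the terms $\mathcal{E}_1, \mathcal{E}_2$ are linear in the Fourier transforms $E_j$ of $e_j$, and $\mathcal{E}_3 = \int E_1 F_2 E_3 \widetilde D_K\,d\theta$ is bilinear in them.

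The main term $\Sigma_{\mathrm{main}} = \delta_1 \delta_3 \sum_{k_2} h_2(k_2) \sum_{\ell=0}^{K-1} (2M+1-|\ell - k_2|)_+$ computes directly via hypothesis \eqref{eq:h23assump} to $\beta K(2M+1)^2 \delta_1 \delta_2 \delta_3(1 - \beta/4) + O(\beta K M^2 Q_0^{-1})$. The linear cross terms $\mathcal{E}_1, \mathcal{E}_2$ are best handled without Cauchy--Schwarz (which is too lossy): since $(\mathbf{1}_{[-M, M]} * h_2)(s) \approx 2\beta M\delta_2$ for $|s| \leq (1-\beta)M$, and since the block averaging hypothesis \eqref{eq:h1assump} gives $\sum_k e_j(k) = O(\nu^2 M)$, direct expansion yields $\mathcal{E}_1, \mathcal{E}_2 = O(\beta K M^2 Q_0^{-1})$. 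Two inputs then feed into bounding $\mathcal{E}_3$: first, the pointwise constraint $h_j \in [0,1]$ gives $h_j^2 \leq h_j$, hence $\|e_j\|_2^2 \leq (\delta_j - \delta_j^2)(2M+1) + O(\nu^2 M)$ (the source of the $\sqrt{(\delta_j-\delta_j^2)}$ factor); second, for $q \geq 3$, applying Lemma~\ref{le:LinCombRootsOfUnity} to the weights $w_b = \sum_{k \equiv b\,(q)} h_2(k)/(2\beta M) \in [0, 1/q]$ (which sum to $\delta_2$) combined with Lemma~\ref{le:Smlimit} gives $|F_2(a/q)| \leq 2\beta M \sin(\pi\delta_2)/\pi + O(\beta M/q)$, extendable to nearby $\theta$ via the Lipschitz bound $|F_2'| \leq 4\pi\beta^2 M^2 \delta_2$.

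For $\mathcal{E}_3$ I would perform a Farey dissection of $[0, 1]$ at level $Q = MQ_0^{1/4}$. The crucial observation is that $\widetilde D_K(a/q) = 0$ whenever $q \mid K = Q_0!$ and $q \nmid a$, since the geometric sum $\sum_{r=0}^{q-1} e(-ra/q)$ vanishes; together with the Lipschitz bound $|\widetilde D_K(\theta)| \leq 2\pi K^2|\theta - a/q|$, this suppresses the contribution on the major arcs around $a/q$ with $2 \leq q \leq Q_0$. This suppression is essential because at such ``bad'' frequencies (e.g., $\theta = 1/2$), $F_2$ could be as large as $F_2(0) = 2\beta M\delta_2$, far exceeding the target bound. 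On the remaining minor arcs ($q > Q_0$), the bound $|F_2(\theta)| \leq 2\beta M\sin(\pi\delta_2)/\pi + O(\beta M/Q_0)$ together with $\int|E_1 E_3 \widetilde D_K|\,d\theta \leq K\|E_1\|_2\|E_3\|_2$ (via $|\widetilde D_K| \leq K$ and Plancherel) yields $|\mathcal{E}_3| \leq \beta K(2M+1)^2 \frac{\sin(\pi\delta_2)}{\pi}\sqrt{(\delta_1-\delta_1^2)(\delta_3-\delta_3^2)} + O(\beta K M^2 Q_0^{-1/4})$. The main obstacle is executing this Farey dissection cleanly: the losses from Lipschitz continuity, the $O(1/q)$ slippage in Lemma~\ref{le:Smlimit}, the Dirichlet approximation, and the near-zero region all have to combine to the prescribed $O(Q_0^{-1/4})$ error, which constrains the choice $Q = MQ_0^{1/4}$.
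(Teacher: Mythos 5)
Your approach is essentially the paper's circle-method argument, built on the same ingredients: a Farey dissection of $[0,1]$; the vanishing $\sum_{\ell=0}^{K-1}e(-\ell a/q)=0$ for $2\le q\le Q_0$ and $q\nmid a$ (since $q\mid K=Q_0!$), which kills the Dirichlet kernel on the low-denominator arcs; the $L^\infty$ bound on the Fourier transform of $h_2$ at rationals with denominator exceeding $Q_0$ via Lemmas~\ref{le:LinCombRootsOfUnity} and~\ref{le:Smlimit}; and $L^2$ (Parseval) control on $h_1,h_3$. The only genuine variation is organizational: you peel off $h_j=\delta_j\mathbf{1}_{[-M,M]}+e_j$ for $j\in\{1,3\}$ in physical space and compute $\Sigma_{\mathrm{main}}$, $\mathcal{E}_1$, $\mathcal{E}_2$ directly, whereas the paper stays on the Fourier side and shows $\widecheck{h}_j(\theta)=\delta_j\widecheck{\mathbf{1}}_{[-M,M]}(\theta)+O(\nu^2)$ on a single major arc around $0$. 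These are equivalent up to bookkeeping; your $\|e_j\|_2^2\le(\delta_j-\delta_j^2)(2M+1)+O(\nu^2M)$ is exactly what the paper obtains by subtracting the major-arc $L^2$ mass of $\widecheck{h}_j$ from its total $L^2$ mass.

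There is one concrete slip: the Farey level $Q=MQ_0^{1/4}$ is too large. Lemma~\ref{le:LinftyZ}, applied with $Q_1=Q_0$, $Q_2=Q$ and $N=\lfloor\beta M\rfloor$, requires $Q_2\le N=\beta M\le M$, which $MQ_0^{1/4}$ violates; and even ignoring that, the lemma's error term $O(Q_2/N)=O(Q_0^{1/4}/\beta)$ is $\ge 1$ and destroys the $L^\infty$ bound on $F_2$. One needs $\beta M/Q_0\ll Q\ll\beta M$. The paper takes $Q=\nu M=MQ_0^{-1/2}$, for which $Q/N=1/(\beta Q_0^{1/2})\le Q_0^{-1/4}$ (using $\beta>Q_0^{-1/4}$), $N/(Q_0Q)\le Q_0^{-1/2}$ and $1/Q_0$ are all acceptable. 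With this corrected $Q$ your argument goes through; the mistake is a parameter miscalculation rather than a conceptual gap, and you had already identified that getting $Q$ right is the delicate point of the Farey dissection.
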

When $\beta > 0$ is small, the lower bound is $\gg \beta K M^2$ when $\delta_1, \delta_2, \delta_3 \geq 0.43$.

We start by providing an $L^{\infty}$ bound which we will use to bound the Fourier transform of $h_2$ in minor arcs.
\begin{lemma}
\label{le:LinftyZ}
Let $\delta \in [0, 1/2]$ be fixed and let $N \geq 1$. Assume that $h \colon \mathbb{Z} \to [0,1]$ is supported on $[-N, N]$ and satisfies
\begin{equation}
\label{eq:h(n)av}
\mathbb{E}_{|n| \leq N} h(n) = \delta.
\end{equation}
Let $a, q \in \mathbb{N}$ and $Q_1, Q_2 \in \mathbb{R}$ satisfy $N \geq Q_2 \geq q \geq Q_1 \geq 1$ and $(a, q) = 1$. Then, for any $\theta \in \mathbb{R}$ with 
\begin{equation}
\label{eq:thetaapprox}
\left|\theta-\frac{a}{q}\right| \leq \frac{1}{qQ_2},
\end{equation} 
we have
\begin{align} \label{eq:minorarcbound}
|\widecheck{h} (\theta)| = \left| \mathbb{E}_{|n| \leq N} h(n) e(n\theta) \right| \leq  \frac{\sin(\pi \delta)}{\pi} +  O\left(\frac{N}{Q_1 Q_2} + \frac{Q_2}{N} + \frac{1}{Q_1}\right)
\end{align}
\end{lemma}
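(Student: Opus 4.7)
The overall strategy is to decompose the phase as $e(n\theta) = e(na/q)\, e(n\eta)$, where $\eta \coloneqq \theta - a/q$ satisfies $|\eta| \leq 1/(qQ_2)$. The factor $e(na/q)$ is purely periodic modulo $q$ and will be controlled by Lemma~\ref{le:LinCombRootsOfUnity} applied with $m=q$, while $e(n\eta)$ is nearly constant on long intervals because $|\eta|$ is small. Geometrically, the smooth piece can be frozen at the centre of the interval, and the rough piece becomes a weighted sum of $q$-th roots of unity whose coefficients lie in the hypothesis range of the lemma.

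To set this up precisely, I would replace $[-N, N]$ by a subinterval $I \subseteq [-N, N]$ of length $L \coloneqq q \lfloor (2N+1)/q \rfloor$, chosen to be a multiple of $q$ so that each residue class modulo $q$ is represented the same number of times ($L/q$) in $I$. The truncation drops at most $q-1 \leq Q_2$ values of $n$, costing at most $Q_2$ in $|\sum_n h(n) e(n\theta)|$ and hence $O(Q_2/N)$ in $|\widecheck{h}(\theta)|$. On $I$, I would then replace $e(n\eta)$ by its value $e(c_I \eta)$ at the centre $c_I$ of $I$. Using $|e(n\eta) - e(c_I \eta)| \leq 2\pi|n - c_I||\eta| \leq \pi L |\eta|$ together with $|h| \leq 1$, the total error from this linearisation is bounded by $O(L^2 |\eta|) = O(N^2/(Q_1 Q_2))$, contributing $O(N/(Q_1 Q_2))$ after normalisation by $(2N+1)^{-1}$.

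After these reductions, the remaining task is to estimate $\bigl|\sum_{n \in I} h(n) e(na/q)\bigr|$. Setting $w_b \coloneqq L^{-1} \sum_{n \in I,\, n \equiv b\,(q)} h(n)$, one has $w_b \in [0, 1/q]$ and $\sum_{b=0}^{q-1} w_b = \delta_I$, where $\delta_I$ is the average of $h$ over $I$; because $I$ omits at most $q$ values of $n$, we obtain $\delta_I = \delta + O(q/L) = \delta + O(Q_2/N)$. Writing
\[
\sum_{n \in I} h(n) e(na/q) = L \sum_{b=0}^{q-1} w_b\, e(ab/q),
\]
the coprimality $\gcd(a,q)=1$ allows the reindexing $b \mapsto a^{-1} b \pmod q$, after which Lemma~\ref{le:LinCombRootsOfUnity} with $m=q$ yields the bound $L \cdot S(\delta_I, q)$. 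By Lemma~\ref{le:Smlimit} one has $S(\delta_I, q) = \sin(\pi \delta_I)/\pi + O(1/q)$, and the Lipschitz property of $\sin$ gives $\sin(\pi \delta_I)/\pi = \sin(\pi \delta)/\pi + O(Q_2/N)$, so altogether $S(\delta_I, q) = \sin(\pi \delta)/\pi + O(1/Q_1 + Q_2/N)$. Combining the three contributions delivers the claimed estimate~\eqref{eq:minorarcbound}.

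The argument is essentially routine; the one real design choice is to take $I$ as long as possible ($\asymp N$) so that the boundary loss $O(Q_2/N)$ and the linearisation error $O(N|\eta|) = O(N/(Q_1 Q_2))$ are both absorbed into the target error term. Edge cases such as $q \in \{1, 2\}$ (where Lemma~\ref{le:LinCombRootsOfUnity} is stated only for $m \geq 3$) are not a genuine obstacle: there the error $O(1/Q_1)$ dominates $\delta - \sin(\pi\delta)/\pi$ and the bound becomes essentially trivial from $|\widecheck{h}(\theta)| \leq \delta$.
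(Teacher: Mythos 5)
Your argument is correct and follows essentially the same route as the paper: isolate the phase $e(n(\theta-a/q))$ at cost $O(N/(Q_1Q_2))$, then bound the resulting weighted sum of $q$-th roots of unity via Lemma~\ref{le:LinCombRootsOfUnity} together with Lemma~\ref{le:Smlimit}. The one cosmetic difference is that you truncate $[-N,N]$ to a block of length a multiple of $q$ so that each weight $w_b$ is exactly $\leq 1/q$, whereas the paper works on all of $[-N,N]$ and absorbs the resulting $w_b\leq(1+O(q/N))/q$ slack into the $O(q/N)$ error.
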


\begin{proof}
We have using~\eqref{eq:thetaapprox}
\[
\widecheck{h} (\theta) = \widecheck{h} (a/q) +  O\left(\frac{N}{q Q_2}\right).
\]
Here we write
\[
\widecheck{h} (a/q) =  \sum_{b \Mod{q}} e(b/q) w_b, \quad \text{with} \quad w_b = \E_{|k| \leq N} h(k) \mathbf{1}_{k \equiv b \overline{a} \, \Mod{q}}.
\]
Now we have the trivial bounds $0\leq w_b\leq (1+O(q/N))/q$, and by \eqref{eq:h(n)av} we get $\sum_{b} w_b = \delta$. Hence, by Lemmas~\ref{le:LinCombRootsOfUnity} and~\ref{le:Smlimit}
\[
|\widecheck{h} (a/q)| \leq \left(\frac{\sin(\pi \delta)}{\pi} + O\left(\frac{1}{Q_1}\right)\right)\left(1+O\left(\frac{q}{N}\right)\right) = \frac{\sin(\pi \delta)}{\pi} + O\left(\frac{q}{N} + \frac{1}{Q_1}\right)
\]
\end{proof}

\begin{proof}[Proof of Proposition~\ref{prop:tripconvZ}]
For any $\ell \in \mathbb{Z}$ we have
\[
(h_1 \ast h_2 \ast h_3) (\ell)  = (2M+1)^3 \int_{0}^1 \widecheck{h}_1 (\theta) \widecheck{h}_2 (\theta) \widecheck{h}_3 (\theta) e(-\ell\theta ) \d \theta,
\]
where $\widecheck{h}_j(\theta) = \mathbb{E}_{|k| \leq M} h_j(k) e(k\theta)$. Recall $\nu = Q_0^{-1/2}$  and let $Q= \nu M$. We write (by Dirichlet's approximation theorem)
\[
[0,1] = \mathfrak{M} \cup  \mathfrak{m}_1 \cup \mathfrak{m}_2  \pmod{1},
\]
where modulo 1 we have
\begin{align*}
\mathfrak{M}  &\coloneqq   \bigg[ -\frac{1}{Q},\frac{1}{Q} \bigg], \quad
\mathfrak{m}_1  \coloneqq  \bigcup_{\substack{2 \leq q \leq Q_0 \\ (a,q)=1}} \bigg[ \frac{a}{q} - \frac{1}{qQ},  \frac{a}{q} +\frac{1}{qQ} \bigg], \\
\mathfrak{m}_2 &\coloneqq   [0, 1] \setminus \left(\mathfrak{M} \cup \mathfrak{m}_1\right) \subseteq \bigcup_{\substack{Q_0 < q \leq Q \\ (a,q)=1}} \bigg[ \frac{a}{q} - \frac{1}{qQ},  \frac{a}{q} +\frac{1}{qQ} \bigg].
\end{align*}
The major arc $\mathfrak{M}$ will contribute the main term while the minor arcs $\mathfrak{m}_1$ and $\mathfrak{m}_2$ only contribute to the error term.
\subsection*{Contribution from  \texorpdfstring{$\mathfrak{M}$}{M}}
For $\theta \in \mathfrak{M}$ we have $|\theta| \leq \frac{1}{Q} = \frac{1}{\nu M}$. By using \eqref{eq:h23assump} we get
\begin{align*} 
\widecheck{h}_2 (\theta) &=\E_{|k| \leq M} h_2(k) e(k \theta ) =\frac{1}{2M+1} \sum_{|j| < \beta \nu^{-3}-1} \sum_{k \in (j \nu^3 M, (j+1) \nu^3 M]} h_2 (k) e(k \theta ) + O(\nu^3) \\ 
&=\frac{1}{2M+1} \sum_{|j| < \beta \nu^{-3}-1} \sum_{k \in (j \nu^3 M, (j+1) \nu^3 M]} h_2(k) \left( e(j\nu^3 M\theta) + O\left(\nu^3 M \cdot \frac{1}{\nu M}\right)\right)  + O(\nu^3) \\ 
&= \frac{\delta_2 \nu^3 M}{2N+1} \sum_{|j| < \beta \nu^{-3}-1}  e(j \nu^3 M \theta ) +O(\nu^2) \\
&= \delta_2 \widecheck{\mathbf{1}}_{[-\beta M, \beta M]}(\theta) + O(\nu^2)
\end{align*}
and similarly by~\eqref{eq:h1assump}
\begin{align} \label{eq:h23onM1} 
\widecheck{h}_j (\theta) &= \delta_j \widecheck{\mathbf{1}}_{[-M, M]}(\theta) +  O(\nu^2)  
\end{align}
for $j = 1, 3$. Thus, we have for $\ell \in \{0,1,\dots,K-1\}$
\begin{align*}
&(2M+1)^3 \int_{\mathfrak{M}} \widecheck{h}_1 (\theta) \widecheck{h}_2 (\theta) \widecheck{h}_3 (\theta) e(- \ell \theta) \d \theta \\
&= \delta_1 \delta_2 \delta_3 (2M+1)^3 \int_{\mathfrak{M}} \widecheck{\mathbf{1}}_{[-\beta M, \beta M]} (\theta) \widecheck{\mathbf{1}}_{[-M, M]} (\theta) \widecheck{\mathbf{1}}_{[-M, M]} (\theta)e(- \ell \theta) \d \theta + O(\nu^3 M^3/Q).
\end{align*}
Since for example
\begin{equation}
\label{eq:1checkbound}
\widecheck{\mathbf{1}}_{[-M, M]} (\theta) \ll \min\{1, 1/(M \theta)\},
\end{equation}
we can extend the integration to $[0, 1]$, obtaining 
\begin{align*}
&(2M+1)^3 \int_{\mathfrak{M}} \widecheck{h}_1 (\theta) \widecheck{h}_2 (\theta) \widecheck{h}_3 (\theta) e(- \ell \theta) \d \theta \\
&= \delta_1 \delta_2 \delta_3 \sum_{\substack{k_1 + k_2 + k_3 = \ell \\ |k_2| \leq \beta M \\ |k_1|, |k_3| \leq M}} 1 + O(\nu M^2) = \delta_1 \delta_2 \delta_3 \sum_{|k_2| \leq \beta M} \sum_{\substack{|k_1| \leq M \\ |k_1 + k_2 - \ell| \leq M}} 1 + O(\nu M^2) \\
&=\delta_1 \delta_2 \delta_3\sum_{|k_2| \leq \beta M} (2M-|k_2|) + O(\nu M^2) = (\delta_1 \delta_2 \delta_3 + O(Q_0^{-1/4}))\beta (1-\beta/4) (2M)^2.
\end{align*} 
\subsection*{Contribution from  \texorpdfstring{$\mathfrak{m}_1$}{m1}}
We want to estimate
\begin{align*}
(2M+1)^3 \int_{\mathfrak{m}_1} \widecheck{h}_1 (\theta) \widecheck{h}_2 (\theta) \widecheck{h}_3 (\theta) \bigg(\sum_{\ell =0}^{K-1} e(-\ell\theta) \bigg)  \d \theta.
\end{align*}
Recall that $K=Q_0!$. Then for $\theta \in \mathfrak{m}_1$ with $|\theta-a/q| < 1/(qQ)$, $q \leq Q_0$,  we have
\begin{align*}
\sum_{\ell =0}^{K-1} e(-\ell\theta) = \sum_{\ell =0}^{Q_0!-1} e\bigg(-\ell\frac{a}{q}\bigg) +  O\bigg( \frac{K^2}{Q}\bigg) = O\bigg( \frac{K^2}{Q}\bigg). 
\end{align*}
By using the trivial bounds $|\widecheck{h}_1|,|\widecheck{h}_2| ,|\widecheck{h}_3|  \leq 1,$ and $|\mathfrak{m}_1| = 1/Q$ we get
\begin{align*}
(2M+1)^3  \bigg|\int_{\mathfrak{m}_1} \widecheck{h}_1 (\theta) \widecheck{h}_2 (\theta) \widecheck{h}_3 (\theta)\bigg(\sum_{\ell =0}^{K-1} e(-\ell\theta) \bigg)  \d \theta \bigg| \ll \frac{K^2 M^3}{Q^2} \ll \frac{K M^2}{Q_0} .
\end{align*}

\subsection*{Contribution from  \texorpdfstring{$\mathfrak{m}_2$}{m2}}
Using~\eqref{eq:h23onM1},~\eqref{eq:1checkbound}, and Parseval's identity, we see that, for $j= 1, 3$,
\begin{align} \label{eq:h23onM1asymp}
\int_{\mathfrak{M}}  |\widecheck{h}_j (\theta)|^2 \d\theta = \delta_j^2 \int_0^1 |\widecheck{\mathbf{1}}_{[-M, M]}(\theta)|^2 = \delta_j^2 \frac{1}{2M+1} + O(\nu/M).
\end{align}
Furthermore by Lemma~\ref{le:LinftyZ} with $N = \lfloor \beta M \rfloor, Q_1 = Q_0,$ and $Q_2 = Q = Q_0^{-1/2} M$, we see that
\[
\sup_{\theta \in \mathfrak{m}}  |\widecheck{h}_2 (\theta)| \leq \frac{2\beta M +1}{2M+1} \left(\frac{\sin (\pi \delta_2)}{\pi} + O\left(Q_0^{-1/4}\right)\right).
\]

Applying also the Cauchy-Schwarz inequality, Parseval's identity, and \eqref{eq:h23onM1asymp} we obtain
\begin{align*}
& \int_{\mathfrak{m}_2} |\widecheck{h}_1 (\theta) \widecheck{h}_2 (\theta) \widecheck{h}_3 (\theta) | \d \theta  \\
\leq & \frac{2\beta M+1}{2M+1} \cdot \left(\frac{\sin(\pi \delta_2)}{\pi}    + O(Q_0^{-1/4})\right) \bigg(\int_{\mathfrak{m}_2} |\widecheck{h}_1 (\theta)|^2 \d \theta\bigg)^{1/2}  \bigg(\int_{\mathfrak{m}_2} |\widecheck{h}_3 (\theta)|^2 \d \theta\bigg)^{1/2}  \\
&\leq \left( \beta \cdot \frac{\sin(\pi \delta_2)}{\pi}    + O(\beta Q_0^{-1/4})\right) \prod_{j \in \{1, 3\}} \bigg(\int_{[0,1]}  |\widecheck{h}_j (\theta)|^2  \d \theta- \int_{\mathfrak{M}}  |\widecheck{h}_j (\theta)|^2 \d \theta\bigg)^{1/2} \\
&=  \beta \cdot \frac{\sin(\pi \delta_2)}{\pi} \frac{1}{2M+1} \sqrt{(\delta_1-\delta_1^2)(\delta_3-\delta_3^2)} + O\left(\beta \frac{Q_0^{-1/4}}{M}\right).
\end{align*}
Combining the above bounds we get
\begin{align*}
&\sum_{\ell  \in \{0,1,\dots,K-1\}}(h_1 \ast h_2 \ast h_3) (\ell) \\
&\geq \beta K (2M+1)^2 \left( \delta_1 \delta_2 \delta_3 (1-\beta/4) - \frac{\sin(\pi \delta_2)}{\pi} \sqrt{(\delta_1-\delta_1^2)(\delta_3-\delta_3^2)} + O(Q_0^{-1/4})\right),
\end{align*}
as claimed.
\end{proof}

\section{Proof of Theorem~\ref{thm:shorts-special}} \label{sec:shortspecialproof}

Write $\mathcal{A} = (f(n))_{x-x^\alpha < n \leq x}$. Applying Lemma~\ref{le:lowerlinear} and for $r \geq 4$ using the trivial lower bound $S_r(\mathcal{A}, x^\theta, x^{1/2})  \geq 0$, it suffices to show that 
\begin{align*}
&S_2(\mathcal{A}, x^\theta, x^{1/2})\\
&\geq \frac{x^{\alpha}}{\log x} \left(\int_{(\theta-1/2)/3}^{1/3} \int_{\max\{\alpha_2, \theta-3\alpha_2\}}^{\min\{1/2, 1-2\alpha_2\}} \frac{\mathcal{F}^\sharp\left(\frac{1}{\kappa(\alpha_1)}, \frac{1}{\kappa(\alpha_2)}, \frac{1}{\kappa(1-\alpha_1-\alpha_2)}\right)}{\alpha_1 \alpha_2 (1-\alpha_1 - \alpha_2)} \d\alpha_1 \d\alpha_2 +o_{Q \to \infty}(1)\right).
\end{align*}
By definition
\[
S_2(\mathcal{A}, x^\theta, x^{1/2}) \geq \sum_{\substack{x-x^\alpha < p_1 p_2 p_3 \leq x \\ p_2 < p_1 < x^{1/2}, \, p_3 > p_2 \\ p_1 p_2^3 \geq x^\theta}} f(p_1) f(p_2) f(p_3).
\]

Recall $Q \in \mathbb{N}$ and $K = Q^4!$. We define $K_1 = K_3= K$ and $K_2 = K Q$ and split $p_j$ into intervals $p_j \in (x^{\frac{r_j-1}{2K_j}}, x^{\frac{r_j+1}{2K_j}}]$, with $(r_1, r_2, r_3)$ running over the set
\begin{align*}
\mathcal{R} \coloneqq   \Biggl\{(r_1, r_2, r_3) \in \mathbb{N}^3 \colon & \frac{r_1}{2K_1} + \frac{r_2}{2K_2} + \frac{r_3}{2K_3} = 1, \, \frac{r_2+1}{2K_2} < \frac{r_1-1}{2K_1}, \\ &\frac{r_1+1}{2K_1} < \frac{1}{2}, \, \frac{r_2+1}{2K_2} < \frac{r_3-1}{2K_1}, \, \frac{r_1-1}{2K_1} + 3\frac{r_2-1}{2K_2} > \theta\Biggr\}
\end{align*}
obtaining
\begin{align}
\label{eq:ffflow}
\begin{aligned}
&S_2(\mathcal{A}, x^\theta, x^{1/2}) \geq \sum_{\substack{x-x^\alpha < p_1 p_2 p_3 \leq x \\ p_2 < p_1 < x^{1/2}, \, p_3 > p_2 \\ p_1 p_2^3 \geq x^\theta}} f(p_1) f(p_2) f(p_3) \\
& \geq \sum_{(r_1, r_2, r_3) \in \mathcal{R}} \frac{x}{\frac{r_1+1}{2K_1} \frac{r_2+1}{2K_2} \frac{r_3+1}{2K_3} \log^3 x}\sum_{\substack{x-x^\alpha < p_1 p_2 p_3 \leq x \\ x^{\frac{r_j-1}{2K_j}} < p_j \leq x^{\frac{r_j+1}{2K_j}}}} \frac{f(p_1) \log p_1 f(p_2) \log p_2 f(p_3) \log p_3}{p_1 p_2 p_3}.
\end{aligned}
\end{align}
Consider a fixed triple $(r_1 ,r_2, r_3)$ counted above and recall that 
\begin{align*}
\Delta = x^{\alpha-1}/(K+3).
\end{align*} 
For $j = 1,2,3$, we define $g_j \colon \mathbb{Z} \to \mathbb{R}_{\geq 0}$ by
\[
g_j(k) \coloneqq   \frac{\mathbf{1}_{|k| < N/(2K_j)}}{\log(1+\Delta)} \sum_{\substack{x^{r_j/(2K_j)}(1+\Delta)^{k-1} < p \leq x^{r_j/(2K_j)} (1+\Delta)^{k} \\ x^{\frac{r_j-1}{2K_j}} < p \leq x^{\frac{r_j+1}{2K_j}}}} \frac{f(p) \log p}{p}.
\]
Write 
\[
N = 2KQ \left\lfloor \frac{\log x}{2KQ \log(1+\Delta)} \right\rfloor,
\]
so that $\frac{N}{2K_j} \in \Z$ for $j =1,2,3$. By (A2*) we have
\[
g_j(k) \leq \kappa\left(\frac{r_j-1}{2K_j}\right)
\]
for every $k$ and by (A3*) we have
\[
\mathbb{E}_{|k| < N/(2K_j)} g(k) = 1+o(1).
\]
Notice that, for $\ell \in \{-K+1, -K+2, \dotsc, 0\}$ and large enough $x$, we have 
\[
(x(1+\Delta)^{\ell-2}, x(1+\Delta)^{\ell}] \subseteq \bigg(x- \frac{K+2}{K+3}x^{\alpha} + O(K x^{2\alpha-1}), x\bigg] \subseteq (x-x^{\alpha},x]
\]
Hence
\begin{align}
\label{eq:p1p2p2low}
\begin{aligned}
&\sum_{\substack{x-x^\alpha < p_1 p_2 p_3 \leq x \\ x^{\frac{r_j-1}{2K_j}} < p_j \leq x^{\frac{r_j+1}{2K_j}}}} \frac{f(p_1) \log p_1 f(p_2) \log p_2 f(p_3) \log p_3}{p_1 p_2 p_3} \\
&\geq \log^3(1+\Delta) \sum_{\substack{k_1 + k_2 + k_3 \in \{-K+1, -K+2,\dotsc,0\} \\ |k_j| \leq N/(2K_j)}} g(k_1)g(k_2)g(k_3).
\end{aligned}
\end{align}

Let us now apply Proposition~\ref{prop:tripconvZ} with $h_j(n) = g_j(-n)/\kappa\left(\frac{r_j-1}{2K_j}\right), M = N/(2K)$, and $\beta = 1/Q$. We obtain
\begin{align*}
&\sum_{\substack{k_1 + k_2 + k_3 \in \{-K+1, -K+2, \dotsc, 0\} \\ |k_j| \leq N/(2K_j)}} g(k_1)g(k_2)g(k_3) \\
&\geq \frac{N^2}{Q K} \left(\mathcal{F}^\sharp\left(\frac{1}{\kappa\left(\frac{r_1-1}{2K_1}\right)}, \frac{1}{\kappa\left(\frac{r_2-1}{2K_2}\right)}, \frac{1}{\kappa\left(\frac{r_3-1}{2K_3}\right)} \right) + O\left(\frac{1}{Q}\right)+o(1)\right).
\end{align*}

Hence~\eqref{eq:ffflow} and~\eqref{eq:p1p2p2low} yield
\begin{align*}
&S_2(\mathcal{A}, x^\theta, x^{1/2}) \\
& \geq \sum_{(r_1, r_2, r_3) \in \mathcal{R}} \frac{x \log^3(1+\Delta)}{\frac{r_1}{2K_1} \frac{r_2}{2K_2} \frac{r_3}{2K_3} \log^3 x} \frac{N^2}{KQ} \left(\mathcal{F}^\sharp\left(\frac{1}{\kappa\left(\frac{r_1-1}{2K_1}\right)}, \frac{1}{\kappa\left(\frac{r_2-1}{2K_2}\right)}, \frac{1}{\kappa\left(\frac{r_3-1}{2K_3}\right)}\right) +o_{Q \to \infty}(1)\right) \\
&= \frac{x^{\alpha}}{\log x} \frac{1}{K_1 \cdot K_2} \sum_{(r_1, r_2, r_3) \in \mathcal{R}} \frac{\mathcal{F}^\sharp\left(\frac{1}{\kappa\left(\frac{r_1-1}{2K_1}\right)}, \frac{1}{\kappa\left(\frac{r_2-1}{2K_2}\right)}, \frac{1}{\kappa\left(\frac{r_3-1}{2K_3}\right)}\right) + o_{Q \to \infty}(1)}{\frac{r_1}{2K_1} \frac{r_2}{2K_2} (1-\frac{r_1}{2K_1}-\frac{r_2}{2K_2})}\\
&= \frac{x^{\alpha}}{\log x} \left( \int_{(\theta-1/2)/3}^{1/3} \int_{\max\{\alpha_2, \theta-3\alpha_2\}}^{\min\{1/2, 1-2\alpha_2\}} \frac{\mathcal{F}^\sharp\left(\frac{1}{\kappa(\alpha_1)}, \frac{1}{\kappa(\alpha_2)}, \frac{1}{\kappa(1-\alpha_1-\alpha_2)}\right)}{\alpha_1 \alpha_2 (1-\alpha_1 - \alpha_2)} \d\alpha_1 \d\alpha_2 + o_{Q \to \infty}(1)\right),
\end{align*}
with the last sum coming from the definition of the Riemann integral.
 Theorem~\ref{thm:shorts-special} now follows.

\section{Sieve with logarithmic weights}
We quickly deduce from Lemma~\ref{le:lowerlinear} the following consequence for sums with logarithmic weights.

\begin{lemma} \label{le:lowerlinearLog}
Let $\varepsilon > 0$ and $Y \geq 1$. Let $\alpha\in (1/2,1)$, and let $\theta \colon [\alpha, 1] \to (1/2, 1)$ be a fixed increasing function. Let $h \colon \mathbb{N} \to \mathbb{R}_{\geq 0}$ be a multiplicative function, and  let $x\geq 2$.

Let $(a_n)_{n \leq x}$ be a sequence of nonnegative real numbers such that, for any $y = x^{\rho} \in [x^\alpha,x]$, the sequence $(a_n)_{n \leq y}$ has level of distribution $\theta(\rho)$ with dimension $1$, size $y/Y$ and density $h$. Then
\begin{align*}
\sum_{x^\alpha < p \leq x} \frac{a_p}{p} &\geq - \frac{e^{\gamma} \log x}{Y} \int_\alpha^1 \int_{\theta(\rho)/2}^{1/2} \prod_{p < x^{\rho (\theta(\rho)-\beta)/2}} \left(1-\frac{h(p)}{p}\right)\frac{\d\beta}{\beta} \d\rho \\
& \qquad + \sum_{r \text{\emph{ even}}} \widetilde{S}_r(\mathcal{A}, \theta) + o\left(\frac{\log x}{Y} \prod_{p < x^{\alpha}} \left(1-\frac{h(p)}{p}\right)\right),
\end{align*}
where 
\[
\widetilde{S}_r(\mathcal{A}, \theta) \coloneqq   \sum_{\substack{x^\alpha < k \leq x \\ k = p_1 \dotsm p_r s, \, p \mid s \implies p > p_r \\ p_r < \dotsb < p_1 < k^{1/2} \\ p_1 \dotsm p_m p_m^2 < k^{\theta\left(\frac{\log k}{\log x}\right)} \text{ for $m < r, m \equiv r \Mod{2}$} \\ p_1 \dotsm p_r p_r^2 \geq k^{(1+\varepsilon)\theta\left(\frac{\log k}{\log x}\right)}}} \frac{a_k}{k}.
\]
\end{lemma}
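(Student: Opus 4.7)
The plan is to deduce Lemma~\ref{le:lowerlinearLog} from Lemma~\ref{le:lowerlinear} by partial summation followed by an interchange of summation and integration. By partial summation with the substitution $t = x^\rho$,
\[
\sum_{x^\alpha < p \leq x} \frac{a_p}{p} = \frac{A(x)}{x} + \log x \int_\alpha^1 \frac{A(x^\rho)}{x^\rho} \, \d\rho, \qquad A(t) \coloneqq \sum_{x^\alpha < p \leq t} a_p.
\]
Because $\alpha > 1/2$ and $\rho \leq 1 < 2\alpha$, we have $x^{\rho/2} < x^\alpha$, so every $n \leq x^\rho$ coprime to $P(x^{\rho/2})$ is either $1$ or a prime, and therefore
\[
A(x^\rho) = S\bigl((a_n)_{n \leq x^\rho}, x^{\rho/2}\bigr) - a_1 - \sum_{x^{\rho/2} < p \leq x^\alpha} a_p.
\]

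Next, apply Lemma~\ref{le:lowerlinear} to $\mathcal{A}^{(\rho)} \coloneqq (a_n)_{n \leq x^\rho}$ with sifting parameter $z = x^{\rho/2}$, which is permitted since $u = 1/2 \in [\theta(\rho)/2, \theta(\rho)]$. Integrating the resulting lower bound against $\log x\, \d\rho / x^\rho$ over $\rho \in [\alpha, 1]$: the first (negative) summand in Lemma~\ref{le:lowerlinear} yields precisely the main negative term of the conclusion, while the error $o\bigl(x^\rho Y^{-1}\prod_{p<x^\rho}(1-h(p)/p)\bigr)$ integrates (using $\prod_{p < x^\rho}(1-h(p)/p) \leq \prod_{p < x^\alpha}(1-h(p)/p)$ for $\rho \geq \alpha$) into the error term of the conclusion.

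The key step is then to show that $\log x \int_\alpha^1 (x^\rho)^{-1}S_r(\mathcal{A}^{(\rho)}, x^{\rho\theta(\rho)}, x^{\rho/2})\,\d\rho \geq (1-o(1))\widetilde{S}_r(\mathcal{A}, \theta)$ for each even $r$. After swapping orders, for a fixed valid factorization $k = p_1 \cdots p_r s$ (with $s$'s prime factors all $>p_r$) the $\rho$-constraints from $S_r$ together with $k \leq x^\rho$ force $\rho \in [\rho_k, \rho_{\max}(k)]$, where $\rho_k = \log k/\log x$ and $\rho_{\max}(k)\theta(\rho_{\max}(k)) = \log(p_1 \cdots p_r p_r^2)/\log x$. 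The strengthened condition $p_1 \cdots p_r p_r^2 \geq k^{(1+\varepsilon)\theta(\rho_k)}$ in the definition of $\widetilde{S}_r$ gives $\rho_{\max}(k)\theta(\rho_{\max}(k)) - \rho_k\theta(\rho_k) \geq \varepsilon\rho_k\theta(\rho_k) \geq \varepsilon\alpha/2$; since $\rho \mapsto \rho\theta(\rho)$ is increasing with derivative bounded above by the regularity of $\theta$, this forces $\rho_{\max}(k) - \rho_k \geq c$ for some constant $c = c(\varepsilon, \theta) > 0$. Hence, uniformly for $k \leq x^{1-c/2}$,
\[
\log x \int_{\rho_k}^{\min(1,\rho_{\max}(k))} \frac{\d\rho}{x^\rho} = \frac{1}{k}\left(1 - \frac{k}{x^{\min(1,\rho_{\max}(k))}}\right) \geq \frac{1-o(1)}{k},
\]
which on summing over valid $k$ yields $(1-o(1))\widetilde{S}_r(\mathcal{A}, \theta)$; the $k$'s with $p_1 \cdots p_r p_r^2 \in [k^{\theta(\rho_k)}, k^{(1+\varepsilon)\theta(\rho_k)})$ contribute nonnegatively to the integrated $S_r$ and can simply be lower-bounded by $0$, while the contribution of $k \in (x^{1-c/2}, x]$ is absorbed into the error.

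It remains to handle the stray terms. The term $-a_1 \log x \int_\alpha^1 x^{-\rho}\,\d\rho = O(a_1 x^{-\alpha})$ is negligible, and after swapping orders the term $-\log x \int_\alpha^1 x^{-\rho}\sum_{x^{\rho/2} < p \leq x^\alpha} a_p\,\d\rho$ is bounded up to a constant by $x^{-\alpha}\,S((a_n)_{n \leq x^\alpha}, x^{\alpha/2})$, which by the linear sieve upper bound (Lemma~\ref{le:linear}(ii)) is $O\bigl(Y^{-1}\prod_{p<x^{\alpha/2}}(1-h(p)/p)\bigr) = o\bigl(\log x \cdot Y^{-1}\prod_{p<x^\alpha}(1-h(p)/p)\bigr)$ by the dimension-$1$ hypothesis~\eqref{eq:h(p)condition}. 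The main technical obstacle is the interchange of integration and summation in the $S_r$ step, specifically the careful control for $k$'s near the boundary $k\sim x$ and the translation of the $(1+\varepsilon)$ margin into a positive uniform gap $\rho_{\max}(k) - \rho_k$; everything else is routine.
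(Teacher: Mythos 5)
Your proposal follows essentially the same route as the paper: partial summation, application of Lemma~\ref{le:lowerlinear} at each scale $x^{\rho}$, and then a ``reversed partial summation'' on the $S_r$-terms using the fact that the $(1+\varepsilon)$-thickening of the level condition gives a uniformly positive gap $\rho_{\max}(k)-\rho_k\geq c(\varepsilon,\theta)$. The constraint analysis (showing that for $k$ satisfying the $\widetilde{S}_r$-conditions, $a_k$ is counted in $S_r(\mathcal{A}^{(\rho)},\cdot)$ for all $\rho\in[\rho_k,\rho_{\max}(k)]$) is correct, and the treatment of the main negative term, the error term, and the stray terms $-a_1$ and $-\sum_{x^{\rho/2}<p\leq x^\alpha}a_p$ is fine.

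There is, however, one genuine oversight: after writing the partial-summation identity with the boundary term $A(x)/x$, you then aim to prove $\log x\int_\alpha^1 (x^\rho)^{-1}S_r(\mathcal{A}^{(\rho)},\cdot)\,\d\rho\geq(1-o(1))\widetilde{S}_r$ from the integral alone, discarding $A(x)/x$. But for the range $k\in(x^{1-c/2},x]$ the integral $\log x\int_{\rho_k}^{1}x^{-\rho}\,\d\rho=\tfrac{1}{k}\bigl(1-\tfrac{k}{x}\bigr)$ falls short of $\tfrac{1}{k}$ by $\tfrac{1}{x}$, and summing this deficit gives $\tfrac{1}{x}\sum_{x^{1-c/2}<k\leq x}a_k$, which can be of size $\asymp 1/Y$; in the relevant one-dimensional applications this is \emph{not} $o\bigl(\tfrac{\log x}{Y}\prod_{p<x^\alpha}(1-h(p)/p)\bigr)$, which is itself $\asymp 1/Y$. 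So the claim that ``the contribution of $k\in(x^{1-c/2},x]$ is absorbed into the error'' does not go through as stated. The fix is exactly what the paper does: retain the boundary term $\tfrac{1}{x}S_r(\mathcal{A}^{(1)},x^{\theta(1)},x^{1/2})$ in the reversed partial summation, so that for $k$ with $\rho_{\max}(k)\geq 1$ the sum of the boundary piece $\tfrac{a_k}{x}$ and the integral piece $\tfrac{a_k}{k}(1-\tfrac{k}{x})$ equals $\tfrac{a_k}{k}$ exactly. The same correction then also removes the unnecessary $(1-o(1))$ factor multiplying $\widetilde{S}_r$, which otherwise would require a separate (and not obviously available) justification that $o(1)\widetilde{S}_r$ is absorbed by the error term. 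Once this is patched, the argument matches the paper's.
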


\begin{proof}
For $t \in [x^\alpha, x]$, let $\mathcal{A}^{(t)} = (a_n)_{n \leq t}$. By partial summation
\begin{align*}
\sum_{x^{\alpha}< p\leq x}\frac{a_p}{p} &= \frac{1}{x} \sum_{x^{\alpha}< p\leq x} a_p + \int_{x^\alpha}^x \frac{1}{t^2} \sum_{x^\alpha < p \leq t} a_p \, \d t \\
& = \frac{1}{x} \left(S(\mathcal{A}^{(x)}, x^{1/2}) + O\left(\sum_{p \leq x^\alpha} a_p\right)\right) + \int_{x^\alpha}^x \frac{1}{t^2}  \left(S(\mathcal{A}^{(t)}, t^{1/2}) + O\left(\sum_{p \leq x^\alpha} a_p\right)\right)\d t.
\end{align*}
Here, by the linear sieve with level of distribution $\theta(\alpha)$,
\[
\sum_{p \leq x^\alpha} a_p \ll \frac{x^\alpha}{Y} \prod_{p < x^\alpha} \left(1-\frac{h(p)}{p}\right),
\]
so the $O$-terms yield acceptable errors.

Applying Lemma~\ref{le:lowerlinear} and making the change of variables $t = x^\rho$, we see that it suffices to show that, for each even $r$, we have
\[
\frac{1}{x} S_r(\mathcal{A}^{(x)}, x^{\theta(1)}, x^{1/2}) + \int_{x^\alpha}^x \frac{1}{t^2} S_r(\mathcal{A}^{(t)}, t^{\theta(\frac{\log t}{\log x})}, t^{1/2}) \d t \geq \widetilde{S}_r(\mathcal{A}, \theta) + o\left(\frac{\log x}{Y} \prod_{p < x^{\alpha}} \left(1-\frac{h(p)}{p}\right)\right).
\]
Here for any $t \in [x^\alpha, x]$
\begin{align*}
 S_r(\mathcal{A}^{(t)}, t^{\theta(\frac{\log t}{\log x})}, t^{1/2}) &= \sum_{\substack{k \leq t \\ k = p_1 \dotsm p_r s, \, p \mid s \implies p > p_r \\ p_r < \dotsb < p_1 < t^{1/2} \\ p_1 \dotsm p_m p_m^2 < t^{\theta\left(\frac{\log t}{\log x}\right)} \text{ for $m < r, m \equiv r \Mod{2}$} \\ p_1 \dotsm p_r p_r^2 \geq t^{\theta\left(\frac{\log t}{\log x}\right)}}} a_k.
\end{align*}
To obtain a lower bound, we can replace the conditions $p_1 < t^{1/2}$ and $p_1 \dotsm p_m p_m^2 \leq t^{\theta\left(\frac{\log t}{\log x}\right)}$ with the same conditions with $t$ replaced by $k$. Furthermore, $p_1 \dotsm p_r p_r^2 \geq t^{\theta\left(\frac{\log t}{\log x}\right)}$ may be replaced by $p_1 \dotsm p_r p_r^2 \geq k^{(1+\varepsilon)\theta\left(\frac{\log k}{\log x}\right)}$ unless $k \leq t^{1-\varepsilon/2}$. Hence, for any $t \in [x^\theta, x]$,
\begin{align*} 
 S_r(\mathcal{A}^{(t)}, t^{\theta(\frac{\log t}{\log x})}, t^{1/2}) & \geq \sum_{\substack{k \leq t \\ k = p_1 \dotsm p_r s, p \mid s \implies p > p_r \\ p_r < \dotsb < p_1 < k^{1/2} \\ p_1 \dotsm p_m p_m^2 < k^{\theta\left(\frac{\log t}{\log x}\right)} \text{ for $m < r, m \equiv r \Mod{2}$} \\ p_1 \dotsm p_r p_r^2 \geq k^{(1+\varepsilon)\theta\left(\frac{\log k}{\log x}\right)}}} a_k + O\left( \sum_{k \leq t^{1-\varepsilon/2}} \tau(k) a_k\right),
\end{align*}
and the claim follows from reversed partial summation since by the level of distribution assumption (with $d = 1$) we have
\[
\sum_{k \leq t^{1-\varepsilon/2}} \tau(k) a_k \ll t^{\varepsilon/4} \sum_{k \leq t^{1- \varepsilon/2}} a_k \ll \frac{t^{1-\varepsilon/4}}{Y}.
\]
This error can be absorbed to the error term in the claim by assumption~\eqref{eq:h(p)condition}.
\end{proof}

\section{A triple convolution estimate in abelian groups}\label{sec: convolution}

Our main result in this section is the following lower bound for triple convolutions that goes into the proof of Theorem \ref{thm:non-torsion-special}. We will give a more general asymmetric version in our forthcoming work.

\begin{proposition} 
\label{prop:Fouriersym}
Let $G$ be an abelian group. Let $\delta \in [0.48, 1/2]$ and $\eta \in [0,\delta]$. Let
$g\colon  G\to [0,1]$ be a function with $\mathbb{E}_{n\in G}g(n) \geq \delta$. Let $a\in G$ and assume that for every subgroup $H \leq G$ of index $2$ we have
\begin{align}
\label{eq:aHlowersym}
\mathbb{E}_{n \in G} g(n)\mathbf{1}_{n \in aH} \geq \frac{1}{2} \eta
\end{align}
Then
\begin{align*}
\frac{1}{|G|^2}(g*g*g)(a)\geq \delta^3 \mathcal{F}(\delta, \eta),
\end{align*}
with $\mathcal{F}(\delta, \eta)$ as in~\eqref{eq:calSpecFdef}.
\end{proposition}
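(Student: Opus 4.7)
The plan is to apply Fourier analysis on $G$. By Fourier inversion and orthogonality,
\begin{equation*}
\frac{(g*g*g)(a)}{|G|^{2}} \;=\; \sum_{\chi \in \widehat{G}} \chi(a)\,\widehat{g}(\chi)^{3},
\end{equation*}
where the trivial character $\chi_0$ contributes $\widehat g(\chi_0)^{3} = (\mathbb{E}_{n \in G} g(n))^{3}$. Writing $\tilde\delta := \mathbb{E}_{n \in G} g(n) \geq \delta$, Parseval together with $0 \leq g \leq 1$ (which forces $g^{2} \leq g$) gives $\sum_{\chi \neq \chi_0} |\widehat{g}(\chi)|^{2} = \mathbb{E}_{n \in G} g(n)^{2} - \tilde\delta^{2} \leq \tilde\delta(1-\tilde\delta)$. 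It therefore suffices to produce termwise lower bounds of the form $\chi(a)\widehat{g}(\chi)^{3} \geq -M_\chi\,|\widehat{g}(\chi)|^{2}$ for $\chi \neq \chi_0$, since these combine with Parseval to give $\frac{(g*g*g)(a)}{|G|^{2}} \geq \tilde\delta^{3} - (\max_\chi M_\chi) \cdot \tilde\delta(1-\tilde\delta)$. The key is to split the non-trivial $\chi$ into characters of order $2$ and of order $\geq 3$, which require rather different arguments.

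For a quadratic character $\chi$ with kernel $H$ of index $2$, the identity $\chi = 2\mathbf{1}_H - 1$ combined with the hypothesis~\eqref{eq:aHlowersym} gives
\begin{equation*}
\chi(a)\widehat g(\chi) \;=\; 2\,\mathbb{E}_{n \in G}[g(n)\,\mathbf{1}_{n\in aH}] - \tilde\delta \;\geq\; \eta - \tilde\delta.
\end{equation*}
Using $\chi(a)^{2} = 1$ and the elementary inequality that every real $T \geq \eta - \tilde\delta$ satisfies $T^{3} = T \cdot T^{2} \geq -(\tilde\delta - \eta)_{+} T^{2}$ (check separately the cases $T \geq 0$ and $T < 0$), this yields $\chi(a)\widehat g(\chi)^{3} \geq -(\tilde\delta - \eta)_{+}\,|\widehat g(\chi)|^{2}$. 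For $\chi$ of order $m \geq 3$, set $w_{j} := \mathbb{E}_{n \in G}[g(n)\,\mathbf{1}_{\chi(n)=e(j/m)}]$, so that $w_{j} \in [0, 1/m]$, $\sum_{j} w_{j} = \tilde\delta$, and $\widehat g(\chi) = \sum_{j=0}^{m-1} e(-j/m) w_{j}$. Lemma~\ref{le:LinCombRootsOfUnity} then yields $|\widehat g(\chi)| \leq S(\tilde\delta, m)$, and invoking Lemma~\ref{le:Smm=4} (which identifies $m = 4$ as the maximiser of $S(\tilde\delta, m)$ over $m \geq 3$ in the relevant parameter range) reduces this to $|\widehat g(\chi)| \leq S(\tilde\delta, 4) = \tfrac{1}{4}\sqrt{1+(4\tilde\delta-1)^{2}}$. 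Assembling both bounds,
\begin{equation*}
\frac{(g*g*g)(a)}{|G|^{2}} \;\geq\; \tilde\delta^{3} - \max\bigl\{\tfrac{1}{4}\sqrt{1+(4\tilde\delta-1)^{2}},\ \tilde\delta - \eta\bigr\}\,\tilde\delta(1-\tilde\delta) \;=\; \tilde\delta^{3}\,\mathcal{F}(\tilde\delta, \eta),
\end{equation*}
and $\geq \delta^{3}\,\mathcal{F}(\delta,\eta)$ then follows by monotonicity of $\delta \mapsto \delta^{3}\,\mathcal{F}(\delta,\eta)$ in the relevant range (the case $\tilde\delta > 1/2$ is trivially favourable, since $\tilde\delta^{3}$ alone already dominates $\delta^{3}\mathcal{F}(\delta,\eta)$).

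The main obstacle is that (A4) provides only a one-sided lower bound on $\chi(a)\widehat g(\chi)$ for quadratic $\chi$, not an absolute-value bound, so the naive estimate $|\sum_{\chi \neq \chi_0} \chi(a)\widehat g(\chi)^{3}| \leq \max |\widehat g(\chi)| \cdot \sum |\widehat g(\chi)|^{2}$ would be too lossy. The trick $T^{3} = T \cdot T^{2}$ converts the one-sided lower bound on $T$ into a termwise lower bound whose aggregate is controlled by Parseval; this is the conceptual heart of the argument and the cleanest way to let (A4) enter the estimate. The remaining quantitative input, bounding $|\widehat g(\chi)|$ for higher-order $\chi$ by its value at $m=4$, is the content of Lemmas~\ref{le:LinCombRootsOfUnity} and~\ref{le:Smm=4}, and it is also the source of the restriction $\kappa \in (2, 2+1/12)$ in Theorem~\ref{thm:non-torsion-special}.
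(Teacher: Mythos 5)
Your proposal follows essentially the same route as the paper's proof: expand the triple convolution as $\sum_{\chi}\chi(a)\widehat{g}(\chi)^3$, isolate the trivial character, use the $aH$-mass hypothesis (via Lemma~\ref{le:reallowerbound}) to get a one-sided bound $\chi(a)\widehat{g}(\chi)\geq \eta-\tilde\delta$ for quadratic $\chi$, use Lemmas~\ref{le:LinCombRootsOfUnity} and~\ref{le:Smm=4} for characters of order $\geq 3$, and finish with Parseval. The only differences are presentational: you package the quadratic and complex contributions as termwise inequalities $\chi(a)\widehat{g}(\chi)^3\geq -M_\chi|\widehat{g}(\chi)|^2$ (which for complex $\chi$ should be read as a bound on $\mathrm{Re}(\chi(a)\widehat g(\chi)^3)$ after pairing $\chi$ with $\bar\chi$, a harmless omission), whereas the paper drops the favourable quadratic terms outright and then factors out a common maximum; these are equivalent. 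You also make explicit the reduction from $\mathbb{E}g\geq\delta$ to equality via monotonicity of $s\mapsto s^3\mathcal{F}(s,\eta)$, which is correct on $[0.48,1/2]$ (and is a nice addition, since the paper merely asserts ``clearly we can assume $\mathbb{E}g=\delta$''). One small caveat: your dismissal of the case $\tilde\delta>1/2$ as ``trivially favourable since $\tilde\delta^3$ alone dominates'' is not a valid justification as written, because the non-principal Fourier terms can be negative and Lemma~\ref{le:Smm=4} no longer applies with parameter $\tilde\delta$; however, this is no worse than the paper's own unjustified ``clearly,'' and the issue is irrelevant to the application, where the mean of the normalised $g$ is $1/\kappa+o(1)\leq 1/2+o(1)$.
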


We shall prove the proposition through Fourier analysis, and for this reason we first collect some lemmas concerning the Fourier transform $\widehat{g}(\chi)$. 

\begin{lemma}
\label{le:reallowerbound}
Let $G$ be an abelian group. Let $\delta \in (0, 1]$ and $\eta \in [0,\delta]$. Let
$g\colon  G\to [0,1]$ be a function with $\mathbb{E}_{n\in G}g(n)= \delta$. Let $a\in G$ and let $H$ be a subgroup of index $2$. Assume that
\begin{align*}
\mathbb{E}_{n \in G} g(n)\mathbf{1}_{n \in aH} \geq \frac{1}{2} \eta
\end{align*}
Let $\psi$ be the quadratic character for which $\psi(h) = 1$ for every $h \in H$ (that is, $H=\ker \psi$). Then
\begin{align}
\label{eq:quadcharbounds}
-\delta + \eta \leq \psi(a) \widehat{g}(\psi) \leq \delta.
\end{align}
\end{lemma}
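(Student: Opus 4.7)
The plan is to unpack the Fourier coefficient $\widehat{g}(\psi)$ in terms of the mass of $g$ on the coset $aH$, and then read off both inequalities directly from the hypotheses.

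First I would note that since $\psi$ has order $2$ and takes values in $S^1$, it is real-valued with $\psi(n) \in \{-1, +1\}$. Therefore $\overline{\psi} = \psi$ and, using the identity $\mathbf{1}_{b \in H} = \tfrac{1}{2}(1 + \psi(b))$ recorded in the preliminaries, I would apply this to $b = an$ to obtain
\begin{align*}
\psi(an) = 2 \cdot \mathbf{1}_{an \in H} - 1 = 2 \cdot \mathbf{1}_{n \in a^{-1}H} - 1 = 2 \cdot \mathbf{1}_{n \in aH} - 1,
\end{align*}
where in the last step I use that $H$ has index $2$, so $a^{-1}H = aH$ (because $a^2 \in H$).

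Next I would compute, using that $\psi(a)^2 = 1$ and $\psi$ is multiplicative,
\begin{align*}
\psi(a) \widehat{g}(\psi)
= \psi(a) \cdot \mathbb{E}_{n \in G} g(n) \psi(n)
= \mathbb{E}_{n \in G} g(n) \psi(an)
= 2 \, \mathbb{E}_{n \in G} g(n) \mathbf{1}_{n \in aH} - \mathbb{E}_{n \in G} g(n).
\end{align*}
Since $\mathbb{E}_{n \in G} g(n) = \delta$ by hypothesis, this gives the key identity
\begin{align*}
\psi(a) \widehat{g}(\psi) = 2 \, \mathbb{E}_{n \in G} g(n) \mathbf{1}_{n \in aH} - \delta.
\end{align*}

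Finally, the lower bound in \eqref{eq:quadcharbounds} follows by inserting the hypothesis $\mathbb{E}_{n} g(n) \mathbf{1}_{n \in aH} \geq \eta/2$, which yields $\psi(a) \widehat{g}(\psi) \geq \eta - \delta$. For the upper bound, the fact that $g \geq 0$ and $\mathbf{1}_{n \in aH} \leq 1$ gives $\mathbb{E}_{n} g(n) \mathbf{1}_{n \in aH} \leq \mathbb{E}_{n} g(n) = \delta$, hence $\psi(a) \widehat{g}(\psi) \leq 2\delta - \delta = \delta$. There is no genuine obstacle here; the entire content is recognizing that for an index-$2$ subgroup the Fourier coefficient at the corresponding quadratic character is (up to the twist $\psi(a)$) precisely the coset-bias of $g$, so the hypotheses translate one-to-one into the claimed inequalities.
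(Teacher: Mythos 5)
Your proof is correct and follows essentially the same route as the paper: both compute $\psi(a)\widehat{g}(\psi) = 2\,\mathbb{E}_{n\in G} g(n)\mathbf{1}_{n\in aH} - \delta$ (the paper writes the indicator as $\mathbf{1}_{\psi(n)=\psi(a)}$, which is the same set as $aH$) and then read off both inequalities from the hypotheses. The only cosmetic difference is that the paper dismisses the upper bound as ``trivial'' (it also follows from $|\widehat{g}(\psi)|\leq \mathbb{E}_n g(n)=\delta$), whereas you derive it from the same identity.
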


\begin{proof}
The upper bound in~\eqref{eq:quadcharbounds} is trivial. For the lower bound note that
\begin{align*}
\begin{split}
 \psi(a)\widehat{g}(\psi)&=\psi(a)\mathbb{E}_{n\in G}g(n)\psi(n) =\mathbb{E}_{n\in G}g(n)1_{\psi(n)=\psi(a)}-\mathbb{E}_{n\in G}g(n)1_{\psi(n)=-\psi(a)}\\
 &=2\mathbb{E}_{n\in G}g(n)1_{\psi(n)=\psi(a)}-\mathbb{E}_{n\in G}g(n) \geq \eta-\delta.
 \end{split}
\end{align*}
\end{proof}

The following lemma gives a general upper bound for the Fourier transform of a real function at complex characters. 

\begin{lemma}[Pointwise bound for Fourier transform at complex characters] \label{le:complexbound} Let $G$ be an abelian group. Let $\delta\in [0, 1]$, and let $g\colon G\to [0,1]$ be a function such that $\mathbb{E}_{n\in G}g(n)=\delta$. Let $\chi_0$ be the trivial character. Then
\begin{align*}
\max_{\substack{\chi\in \widehat{G}\\\chi^2\neq \chi_0}}|\widehat{g}(\chi)|\leq \max_{\substack{m \geq 3 \\ m \mid |G|}} S(\delta, m),
\end{align*}
where $S(\delta, m)$ is as in~\eqref{eq:Sdelmdef}.
\end{lemma}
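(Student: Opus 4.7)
The plan is to reduce the bound to a direct application of Lemma~\ref{le:LinCombRootsOfUnity}, with $m$ taken to be the order of the character $\chi$. The hypothesis $\chi^2 \neq \chi_0$ gives $m \geq 3$, and since the image $\chi(G)$ is a finite subgroup of $S^1$, it must be the group $\mu_m$ of $m$-th roots of unity; by Lagrange's theorem applied to the quotient $G/\ker\chi \cong \mu_m$, we get $m \mid |G|$.

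Next, I would partition $G$ according to the value of $\overline{\chi}$ and write the Fourier transform as a linear combination of $m$-th roots of unity. Specifically, for $b \in \{0, 1, \dots, m-1\}$ define
\[
w_b \coloneqq \frac{1}{|G|} \sum_{\substack{n \in G \\ \overline{\chi}(n) = e(b/m)}} g(n),
\]
so that $\widehat{g}(\chi) = \mathbb{E}_{n \in G} g(n) \overline{\chi}(n) = \sum_{b=0}^{m-1} e(b/m) w_b$. Since $\overline{\chi}$ is a surjective homomorphism onto $\mu_m$, each fiber has cardinality exactly $|G|/m$, and the bound $g \leq 1$ then yields $w_b \in [0, 1/m]$. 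Moreover, summing over $b$ gives $\sum_{b=0}^{m-1} w_b = \mathbb{E}_{n \in G} g(n) = \delta$.

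These are precisely the hypotheses of Lemma~\ref{le:LinCombRootsOfUnity}, which therefore gives
\[
|\widehat{g}(\chi)| = \Biggl|\sum_{b=0}^{m-1} e\!\left(\frac{b}{m}\right) w_b\Biggr| \leq S(\delta, m) \leq \max_{\substack{m' \geq 3 \\ m' \mid |G|}} S(\delta, m'),
\]
and taking the maximum over all non-quadratic, non-trivial $\chi$ yields the claim. There is no real obstacle here; the entire argument is an unpacking of definitions designed to match the hypotheses of Lemma~\ref{le:LinCombRootsOfUnity}, with the only minor point being the verification that $m \mid |G|$ so that the maximum on the right is indeed taken over admissible values.
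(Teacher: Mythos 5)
Your proof is correct and follows essentially the same route as the paper: partition $G$ by the value of the character, reduce to a linear combination of $m$-th roots of unity with coefficients $w_b \in [0,1/m]$ summing to $\delta$, and apply Lemma~\ref{le:LinCombRootsOfUnity}. The only cosmetic differences are that you use the coset/fiber-size argument (each fiber of $\overline{\chi}$ is a coset of $\ker\overline{\chi}$, hence of size $|G|/m$) where the paper uses orthogonality of additive characters to obtain $w_b \leq 1/m$, and you explicitly note $m \mid |G|$, which the paper leaves implicit.
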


\begin{proof}
Let $\chi\in \widehat{G}$ be a complex character. Let $m\geq 3$ be the order of $\chi$. We have 
\begin{align*}
\widehat{g}(\chi)=\sum_{b\Mod m}e\left(\frac{b}{m}\right)w_b,
\end{align*}
where
\begin{align*}
 w_b=\mathbb{E}_{n\in G}g(n)1_{\chi(n)=e(b/m)}  \end{align*}
Note that by the orthogonality of additive characters we have
\begin{align*}
0\leq w_b\leq \mathbb{E}_{n\in G}1_{\chi(n)=e(b/m)} =\frac{1}{m}\sum_{j=0}^{m-1}\mathbb{E}_{n\in G}\chi(n)^{j}e\left(-\frac{jb}{m}\right)=\frac{1}{m}.  
\end{align*}
Hence the claim follows from Lemma~\ref{le:LinCombRootsOfUnity}
\end{proof}

We can figure out which $m$ gives maximal $S(\delta, m)$ when $\delta$ is close to $1/2$. In particular we have the following lemma. 

\begin{lemma} \label{le:Smm=4} Let $\delta\in [0.48, 0.5)$, and let $S(\delta, m)$ be as in~\eqref{eq:Sdelmdef}. Then
\begin{align*}
\max_{m \geq 3} S(\delta, m) = S(\delta, 4) = \frac{1}{4}\sqrt{1+(4\delta-1)^2}. 
\end{align*}
\end{lemma}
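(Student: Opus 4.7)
\medskip

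\emph{Proof plan.} The plan has three parts: a direct evaluation at $m=4$, a uniform upper bound for all large $m$, and a finite verification for the intermediate values.

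First I would compute $S(\delta,4)$ directly. For $\delta \in [0.48, 1/2)$ we have $4\delta \in [1.92, 2)$, hence $\lfloor 4\delta \rfloor = 1$. Substituting $m=4$ and $k=\lfloor \delta m\rfloor = 1$ into~\eqref{eq:Sdelmdef} gives
\[
S(\delta, 4) = \left|\frac{\sin(\pi/4)}{4\sin(\pi/4)} + \left(\delta - \tfrac{1}{4}\right) e\!\left(\tfrac{1}{4}\right)\right| = \left|\tfrac{1}{4} + i\left(\delta - \tfrac{1}{4}\right)\right| = \tfrac{1}{4}\sqrt{1+(4\delta-1)^2},
\]
which establishes both equalities in the lemma and, using $\delta \geq 0.48$, yields the explicit lower bound $S(\delta,4) \geq \tfrac{1}{4}\sqrt{1.8464} > 0.3398$.

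For the upper bound at large $m$, I would apply the triangle inequality to~\eqref{eq:Sdelmdef}, bound $\sin(\pi \lfloor \delta m\rfloor/m) \leq \sin(\pi\delta) \leq 1$ (since $\sin$ is increasing on $[0,\pi/2]$ and $\lfloor \delta m\rfloor/m \leq \delta \leq 1/2$), and use $\delta - \lfloor \delta m\rfloor/m < 1/m$ to obtain
\[
S(\delta, m) \leq \frac{1}{m \sin(\pi/m)} + \frac{1}{m}.
\]
The inequality $\sin(\pi/m) \geq \pi/m - \pi^3/(6m^3)$ then gives $m\sin(\pi/m) \geq \pi(1 - \pi^2/(6m^2))$, so for any explicit $m_0$ (a little numerical experimentation suggests $m_0 = 70$ suffices) the right-hand side lies strictly below $0.3398 \leq S(\delta,4)$ uniformly in $\delta$.

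Finally, for each of the finitely many remaining values $m \in \{3,5,6,7,\ldots,m_0-1\}$, the floor $k = \lfloor \delta m\rfloor$ takes at most two values on $[0.48, 1/2)$, and on each subinterval on which $k$ is constant,
\[
S(\delta,m)^2 = \frac{\sin^2(\pi k/m)}{m^2\sin^2(\pi/m)} + \frac{2(\delta - k/m)\sin(\pi k/m)\cos(\pi(k+1)/m)}{m\sin(\pi/m)} + (\delta - k/m)^2
\]
is a quadratic polynomial in $\delta$. Comparing with the quadratic $S(\delta,4)^2 = \tfrac{1}{16}(1+(4\delta-1)^2)$ reduces $S(\delta,m) \leq S(\delta,4)$ to a one-variable polynomial inequality on a short interval, which can be verified by evaluating at the endpoints together with the (unique) interior critical point.

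The main obstacle is the last step: there is no single clean structural reason why $m=4$ beats every other $m$ near $\delta = 1/2$, and the verification is essentially numerical. In line with the paper's footnote referring to an accompanying Mathematica file, I would expect this finite case-check to be delegated to a short machine computation and the proof text to simply record the outcome. One could shorten this list somewhat by sharpening the large-$m$ bound (for instance by replacing $\sin(\pi\delta) \leq 1$ with the actual value $\sin(\pi\delta) \leq 1$ augmented by a second-order correction), but since the range $m \leq m_0$ is short in any case, this is mostly a cosmetic choice.
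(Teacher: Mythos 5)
Your proposal is correct and follows the same three-part structure as the paper's proof (exact evaluation at $m=4$, asymptotic bound for large $m$, finite case-check in between), but the second step is genuinely cruder, and this changes the flavor of the argument. You bound $S(\delta,m)$ by the pure triangle inequality, $S(\delta,m)\leq \frac{1}{m\sin(\pi/m)}+\frac1m$, which tends to $1/\pi\approx 0.3183$ from above but decays slowly; at $m=11$, for instance, it still gives about $0.41$, well above the target $0.3398$, so you need to run $m_0$ up to around $50$--$70$ and push the finite check there. The paper instead keeps the complex structure of the two summands: for $\delta\in[0.48,0.5)$ the phase $e((\lfloor\delta m\rfloor+1)/(2m))$ lies in a narrow arc near $i$ (argument in $[0.48\pi,\,\pi/2+\pi/(2m)]$), so the two terms add nearly orthogonally rather than constructively. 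Carrying this through yields the quantity $S^+(m)$ in the paper, which is already below $S(0.48,4)^2$ at $m=11$, shrinking the finite check to $m\in\{3,\dots,10\}$, where moreover $\lfloor\delta m\rfloor$ is constant on $[0.48,0.5)$, so each case is a single short computation. Both routes ultimately hand the residual cases to a machine (the paper does so as well, via the Mathematica supplement), so your plan is sound; the paper's argument simply buys a much shorter case list by extracting the phase cancellation you discard. One small remark on your last step: since both $S(\delta,m)^2$ and $S(\delta,4)^2$ have leading coefficient $1$ in $\delta^2$, their difference is actually \emph{linear} in $\delta$ on each subinterval where $\lfloor\delta m\rfloor$ is constant, so the interior-critical-point check you describe is unnecessary --- endpoint evaluation suffices.
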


\begin{proof}
Recall
\begin{equation}
\label{eq:Sdelmdef2}
S(\delta, m) = \left|\frac{\sin\left(\pi \frac{\left\lfloor \delta m \right\rfloor}{m}\right)}{m \sin\left(\frac{\pi}{m}\right)} + \left(\delta-\frac{\lfloor \delta m\rfloor}{m}\right) e\left(\frac{\lfloor \delta m \rfloor+1}{2m} \right)\right|.
\end{equation}
For $\delta \in [0.48, 0.5)$, we have
\[
\frac{\lfloor \delta m \rfloor+1}{2m} \geq \frac{0.48m - 1 + 1}{2m} = 0.24, \quad \frac{\lfloor \delta m \rfloor+1}{2m} \leq \frac{\frac{m-1}{2}+1}{2m} \leq \frac{1}{4}+\frac{1}{4m},
\]
so that
\[
\frac{1}{2\pi}\operatorname{arg} \left(e\left(\frac{\lfloor \delta m \rfloor+1}{2m} \right)\right) \in \left[0.24, \frac{1}{4}+\frac{1}{4m}\right]. 
\]
Using also that $\delta-\frac{\lfloor \delta m\rfloor}{m} \leq \frac{1}{m}$ and that, for $\delta \in [0.48, 0.5)$, the first term inside the absolute values in~\eqref{eq:Sdelmdef2} is real, positive, and increasing in $\lfloor \delta m\rfloor$ which is at most $\frac{m-1}{2}$, we obtain
\[
S(\delta, m) \leq \left|\frac{\sin\left(\frac{\pi}{2}-\frac{\pi}{2m}\right)}{m \sin\left(\frac{\pi}{m}\right)} + \frac{1}{m}e(0.24) \right|
\]
Here
\[
\frac{\sin\left(\frac{\pi}{2}-\frac{\pi}{2m}\right)}{\sin\left(\frac{\pi}{m}\right)} = \frac{\cos\left(\frac{\pi}{2m}\right)}{\sin\left(\frac{\pi}{m}\right)} = \frac{1}{2\sin\left(\frac{\pi}{2m}\right)}
\]
and thus
\begin{align*}
S(\delta, m)^2 &\leq \left(\frac{1}{2m \sin\left(\frac{\pi}{2m}\right)} + \frac{\cos(0.48\pi)}{m}\right)^2 + \left(\frac{\sin(0.48\pi)}{m}\right)^2 \\
&= \frac{1+4\cos(0.48 \pi) \sin\left(\frac{\pi}{2m}\right) + 4 \sin^2\left(\frac{\pi}{2m}\right)}{4m^2 \sin^2 \left(\frac{\pi}{2m}\right)} \eqqcolon S^+(m),
\end{align*}
say.

Now it is not difficult to see that the numerator of $S^+(m)$ is decreasing and the denominator of $S^+(m)$ increasing as a function of $m$. A quick calculation reveals that $S^+(11) \leq S(0.48, 4)^2 \leq S(\delta, 4)^2$ for any $\delta \in [0.48, 0.5)$. Hence also $S(\delta, m) \leq S(\delta, 4)$ for any $\delta \in [0.48, 0.5)$ and any $m \geq 11$.

For the remaining $m \leq 10$, we notice that, for $\delta \in [0.48, 0.5)$, we have 
\[
\lfloor \delta m\rfloor = 
\begin{cases}
\frac{m}{2}-1 & \text{if $m$ is even;} \\
\frac{m}{2}-\frac{1}{2} & \text{if $m$ is odd,}
\end{cases}
\]
and plugging this into~\eqref{eq:Sdelmdef2}, it is easy to check that $S(\delta, m) \leq S(0.48, 4)$ for every $m \in \{3, 5, 6, 7, 8, 9, 10\}$ and $\delta \in [0.48, 0.5)$.
\end{proof}

\begin{proof}[Proof of Proposition~\ref{prop:Fouriersym}]
Clearly we can assume that $\mathbb{E}_{b \in G} g(b) = \delta$. By the orthogonality of characters, we have
\begin{align*}
\frac{1}{|G|^2}(g*g*g)(a)&= \sum_{\chi\in \widehat{G}}\overline{\chi}(a)\widehat{g}(\chi)^3. 
\end{align*}
Separating the contribution of the trivial character $\chi_0$ and dropping positive terms, we obtain
\begin{align}\label{eq:max}
\frac{1}{|G|^2} (g*g*g)(a)&\geq \delta^3 - \sum_{\substack{\chi\in \widehat{G}\\\chi^2\neq \chi_0}}|\widehat{g}(\chi)|^3+\sum_{\substack{\chi\in \widehat{G}\\\chi^2=\chi_0\\\chi(a)\widehat{g}(\chi)<0}}(\chi(a)\widehat{g}(\chi))^3\nonumber\\
& \geq \delta^3 - \max\left\{\max_{\substack{\chi\in \widehat{G}\\ \chi^2\neq \chi_0}}|\widehat{g}(\chi)|,\max_{\substack{\chi\in \widehat{G}\\ \chi^2=\chi_0,\, \chi(a)\widehat{g}(\chi)<0}}-\chi(a)\widehat{g}(\chi)\right\}\sum_{\substack{\chi \in \widehat{G} \\ \chi \neq \chi_0}} |\widehat{g}(\chi)|^2.
\end{align}
Applying Lemmas~\ref{le:reallowerbound},~\ref{le:complexbound}, and~\ref{le:Smm=4} we see that~\eqref{eq:max} is
\[
\geq \delta^3 - \max\left\{\frac{1}{4}\sqrt{1+(4\delta-1)^2}, \delta - \eta\right\} \sum_{\substack{\chi \in \widehat{G} \\ \chi \neq \chi_0}} |\widehat{g}(\chi)|^2
\]
and we get by inserting $|\widehat{g}(\chi_0)|^2= \delta^2$ the bound
\begin{align*}
\frac{1}{|G|^2} (g*g*g)(a) \geq \delta^3 - \max\left\{\frac{1}{4}\sqrt{1+(4\delta-1)^2}, \delta - \eta\right\}\bigg( -\delta^2 + \sum_{\substack{\chi \in \widehat{G} }} |\widehat{g}(\chi)|^2 \bigg).
\end{align*}
By Parseval's identity  and using $g(b) \in [0,1]$, we see that
\[
\sum_{\substack{\chi \in \widehat{G} }} |\widehat{g}(\chi)|^2 = \E_{b \in G} |g(b)|^2 \leq \E_{b \in G} g(b) = \delta,
\]
and the claim follows.
\end{proof}

\begin{remark}
\label{rem:SecondLargest}
In some ranges of the parameters, it would be possible to improve on Proposition~\ref{prop:Fouriersym} by separating also the contribution of the largest Fourier coefficient in case it corresponds to a quadratic character and then estimating the size of the second largest Fourier coefficient (using the fact that if $|\widehat{g}(\chi)|$ is close to $\delta$ for some real character $\chi$, then $|\widehat{g}(\chi')|$ must be small for all other nontrivial character $\chi'$ by the orthogonality of different characters). This would improve on the lower bound we obtain in Theorem~\ref{thm:non-torsion-special} and thus our value for Linnik's constant.
\end{remark}

\section{Proof of Theorem~\ref{thm:non-torsion-special}} \label{sec:non-torsion-specialproof}

\begin{proof}[Proof of Theorem~\ref{thm:non-torsion-special}]
We apply Lemma~\ref{le:lowerlinearLog} with $a_n = \mathbf{1}_{n \in \mathcal{N}} \mathbf{1}_{f(n) = a}$, $h(n) \mathbf{1}_{n \in \mathcal{N}}$ in place of $h(n)$, $\alpha = 1/2$, and $\varepsilon$ so small that $\theta(1) < 1-\varepsilon$. We drop all of the terms $\widetilde{S}_r$ with $r > 2$. It suffices then to show that
\begin{align}
\label{eq:S2tildeclaimSpec}
\widetilde{S}_2(\mathcal{A}, \theta) \geq \frac{1}{|G|}\left(\frac{\log^3 2}{3} \mathcal{F}\left(\frac{1}{\kappa}, \frac{\eta}{\kappa}\right)+o(1)\right).
\end{align}
Now by definition
\begin{align*}
\begin{aligned}
\widetilde{S}_2(\mathcal{A}, \theta) = \sum_{\substack{x^{1/2} < k \leq x, k \in \mathcal{N} \\ k = p_1 p_2 s, \, p \mid s \implies p > p_2 \\ p_2 < p_1 < k^{1/2} \\ p_1 p_2^3 \geq k^{(1+\varepsilon)\theta\left(\frac{\log k}{\log x}\right)}}} \frac{\mathbf{1}_{f(k) = a}}{k}.
\end{aligned}
\end{align*}
Since we need a lower bound, we can restrict to the case when $s$ is a prime (writing $s = p_3$) and strengthen the lower bound condition for $p_1 p_2^3$ to $p_1 p_2^3 \geq k$ as $(1+\varepsilon)\theta(\frac{\log k}{\log x}) < (1+\varepsilon)(1-\varepsilon) < 1$. We obtain 
\begin{align*}
\begin{aligned}
\widetilde{S}_2(\mathcal{A}, \theta) &\geq \sum_{\substack{x^{1/2} < p_1 p_2 p_3 \leq x \\ p_j \in \mathcal{N}, \, p_3 > p_2 \\ p_2 < p_1 < (p_1 p_2 p_3)^{1/2} \\ p_1 p_2^3 \geq p_1 p_2 p_3}} \frac{\mathbf{1}_{f(p_1 p_2 p_3) = a}}{p_1 p_2 p_3} = \sum_{\substack{x^{1/2} < p_1 p_2 p_3 \leq x \\ p_j \in \mathcal{N}, \, p_3 > p_2 \\ p_2 < p_1 < p_2 p_3 \\ p_2^2 \geq p_3}} \frac{\mathbf{1}_{f(p_1 p_2 p_3) = a}}{p_1 p_2 p_3} \\
&\geq \sum_{\substack{x^{1/6} < p_1, p_2, p_3 \leq x^{1/3} \\ p_j \in \mathcal{N} \\ p_2 < \min\{p_1, p_3\}}} \frac{\mathbf{1}_{f(p_1 p_2 p_3) = a}}{p_1 p_2 p_3}
\end{aligned}
\end{align*}
Using the assumption (A2) we see that
 \begin{align*}
&\sum_{\substack{x^{1/6} < p_1, p_2, p_3 \leq x^{1/3} \\ p_j \in \mathcal{N} \\ p_j \text{ not all distinct}}} \frac{\mathbf{1}_{f(p_1 p_2 p_3) = a}}{p_1 p_2 p_3} \ll  \sum_{x^{1/6} < p,p' \leq x^{1/3}} \frac{\mathbf{1}_{f(p^2 p') = a}}{p^2 p'} \\
 & \ll   x^{-1/6}\sum_{\substack{b,b' \in G \\ b^2 b' = a}}  \sum_{x^{1/6} < p \leq x^{1/3}} \frac{\mathbf{1}_{f(p) = b}}{p}  \sum_{x^{1/6} < p' \leq x^{1/3}} \frac{\mathbf{1}_{f(p') = b'}}{p} \ll  |G|^{-1}x^{-1/6}.
 \end{align*}

Hence, taking into account the number of permutations, we obtain 
\begin{align}
\label{eq:S2midlow}
\widetilde{S}_2(\mathcal{A}, \theta) &\geq \frac{1}{3} \sum_{\substack{x^{1/6} < p_1, p_2, p_3 \leq x^{1/3} \\ p_j \in \mathcal{N}}} \frac{\mathbf{1}_{f(p_1 p_2 p_3) = a}}{p_1 p_2 p_3} + O( |G|^{-1} x^{-1/6}).
\end{align}
For $b \in G$, we define
\[
g(b) =\frac{|G|}{\log 2} \sum_{\substack{x^{1/6} < p \leq x^{1/3} \\ p \in \mathcal{N}}} \frac{\mathbf{1}_{f(p) = b}}{p}
\]
so that
\begin{equation}
\label{eq:S2tillow}
\widetilde{S}_2(\mathcal{A}, \theta) \geq \frac{(\log 2)^3}{3|G|^3} \sum_{\substack{b_1, b_2, b_3 \in G \\ b_1 b_2 b_3 = a}} g(b_1) g(b_2) g(b_3) + O(|G|^{-1} x^{-1/6}).
\end{equation}
Applying assumption (A2), we obtain that, for each $b \in G$ and $j \in \{1, 2, 3\}$, we have $g(b) \leq \kappa$ and by (A3) we have $\mathbb{E}_{b \in G} g(b) = 1+o(1)$. Furthermore assumption (A4) implies that, for any index 2 subgroup $H$, we have 
\begin{equation} \label{eq:QRassumptionforg}
\mathbb{E}_{b\in G}g(b)1_{b \in aH} \geq \frac{1}{2} \eta.
\end{equation}
Given these conditions, normalizing $g$ by $1/\kappa$ and applying Proposition \ref{prop:Fouriersym} we obtain
\begin{align*}
\sum_{\substack{b_1, b_2, b_3 \in G\\ b_1 b_2 b_3 = a}} g(b_1) g(b_2) g(b_3)&\geq (1+o(1)) \mathcal{F}\left(\frac{1}{\kappa}, \frac{\eta}{\kappa}\right)|G|^2.
\end{align*}
Combining this with~\eqref{eq:S2tillow}, the claim~\eqref{eq:S2tildeclaimSpec} follows.
\end{proof}

\begin{remark}
\label{rem:UseMoreSn}
It would be possible to obtain a better lower bound for the left-hand side of~\eqref{eq:FinalLowSpec} by not throwing out so much contribution from the right-hand side of Lemma~\ref{le:lowerlinearLog}.
\end{remark}

\section{Linnik's problem} \label{sec:Linnik}
Before turning to the proof of Theorem~\ref{thm_Linnik} we prove lemmas which will imply the assumptions of Theorem~\ref{thm:non-torsion-special}. We write 
\[
\mathcal{N}_q = \{n \in \mathbb{N} \colon (n, q) = 1\}. 
\]
The first lemma will give us hypothesis Theorem~\ref{thm:non-torsion-special}(A1).
\begin{lemma}\label{le:lod}
Let $\varepsilon > 0$ be small. Let $q\in \mathbb{N}, b \in \mathbb{Z}_q^{\times}$, and $x\geq q^{1+2\varepsilon}$. Let $h \colon \mathcal{N}_q \to \mathbb{R}_{\geq 0}$ be a multiplicative function for which $h(p) = 1$ for every prime $p \in \mathcal{N}_q$. Then the sequence $(1_{n\equiv  b \Mod q})_{n\in \mathcal{N}_q(x)}$ has level of distribution $1-\log q/\log x-\varepsilon$ with dimension $1$, size $x/q$ and density $h$.
\end{lemma}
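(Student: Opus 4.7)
The plan is to verify the two conditions of Definition~\ref{def:lod} directly, with the main work being an essentially trivial counting argument in arithmetic progressions modulo $q$.

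First I would check the density condition~\eqref{eq:h(p)condition}. Since $h(p)=1$ on $\mathcal{N}_q$, Mertens' theorem yields
\begin{align*}
\prod_{\substack{w\leq p<z\\ p\in\mathcal{N}_q}}\left(1-\frac{h(p)}{p}\right)^{-1}
\leq \prod_{w\leq p<z}\left(1-\frac{1}{p}\right)^{-1}
=\left(1+O\!\left(\frac{1}{\log w}\right)\right)\frac{\log z}{\log w},
\end{align*}
which is~\eqref{eq:h(p)condition} with $k=1$ and some absolute constant $C\geq 1$.

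Next I would estimate the main remainder sum. For each squarefree $d\in\mathcal{N}_q$ with $d\leq x^\theta$, the condition $(d,q)=1$ means the congruence $dn\equiv b\Mod q$ has a unique solution $n\equiv b d^{-1}\Mod q$, so
\begin{align*}
\sum_{\substack{n\leq x/d\\ dn\in\mathcal{N}_q}} 1_{dn\equiv b\,(q)}
=\frac{1}{q}\cdot\frac{x}{d}+O(1)=\frac{h(d)}{d}\cdot\frac{x}{q}+O(1).
\end{align*}
Summing the $O(1)$ errors over $d\leq x^\theta$ with $\theta=1-\log q/\log x-\varepsilon$ gives a total of $O(x^\theta)=O(x^{1-\varepsilon}/q)$.

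It remains to verify that this is $o\!\left(X\prod_{p<x,\,p\in\mathcal{N}_q}(1-h(p)/p)\right)$. Here $X=x/q$, and by Mertens' theorem
\begin{align*}
\prod_{\substack{p<x\\ p\in\mathcal{N}_q}}\left(1-\frac{1}{p}\right)
=\frac{q}{\varphi(q)}\prod_{p<x}\left(1-\frac{1}{p}\right)\asymp\frac{q}{\varphi(q)\log x},
\end{align*}
so the target is $\asymp x/(\varphi(q)\log x)\geq x/(q\log x)$. Since $x^\theta/q=x^{1-\varepsilon}/q^2\cdot q=x^{1-\varepsilon}/q$, the ratio of the error to the target is at most $O(\log x\cdot x^{-\varepsilon})=o(1)$, which gives the required bound and completes the verification. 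The only subtle point is ensuring $\theta>0$, but the hypothesis $x\geq q^{1+2\varepsilon}$ makes $\log q/\log x\leq 1/(1+2\varepsilon)\leq 1-2\varepsilon+O(\varepsilon^2)$, so $\theta\geq \varepsilon-O(\varepsilon^2)>0$ for small $\varepsilon$; no genuine obstacle arises.
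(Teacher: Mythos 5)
Your proof is correct and uses essentially the same approach as the paper: for each squarefree $d$ coprime to $q$ (the only $d$ that occur since $d\in\mathcal{N}_q$), the congruences $d\mid n$ and $n\equiv b\Mod q$ combine via CRT to a single progression modulo $dq$, giving $x/(dq)+O(1)$, and summing $O(1)$ over $d\le x^\theta=x^{1-\varepsilon}/q$ gives the error $O(x^{1-\varepsilon}/q)$, which is $o(X\prod_{p<x,\,p\in\mathcal{N}_q}(1-h(p)/p))\asymp x/(\varphi(q)\log x)$. The paper's proof is terser (one displayed inequality and ``the claim follows''); you usefully spell out the density verification via Mertens and the comparison with the target, which the paper leaves implicit. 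One cosmetic remark: the line ``$x^\theta/q=x^{1-\varepsilon}/q^2\cdot q=x^{1-\varepsilon}/q$'' is garbled --- what you mean is simply $x^\theta=x^{1-\varepsilon}/q$, which is the error bound --- but the conclusion drawn from it (ratio $O(\log x\cdot x^{-\varepsilon})=o(1)$) is the right one.
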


\begin{proof}
We have
\begin{align*}
\sum_{d \in \mathcal{N}_q(x^{1-\varepsilon}/q)} \left|\sum_{\substack{n\in \mathcal{N}_q(x) \\ n\equiv 0\Mod d}}1_{n\equiv b\Mod q}- \frac{x}{qd}\right| \ll \sum_{d \in \mathcal{N}(x^{1-\varepsilon}/q)} 1 \leq \frac{x^{1-\varepsilon}}{q},
\end{align*}
and the claim follows.
\end{proof}

\begin{remark}
\label{rem:Burgess}
It would be possible to improve on the level of distribution and thus $\theta$ and $\kappa$ for which we apply Theorem~\ref{thm:non-torsion-special} by using bilinear error term in the linear sieve, orthogonality of characters, Parseval, and Burgess' bound for character sums. This would naturally yield an improvement on our value of Linnik's constant.
\end{remark}

The second lemma will give us Theorem~\ref{thm:non-torsion-special}(A2).
\begin{lemma}
\label{le:B-T} 
Let $\varepsilon>0$ be small but fixed. Let $q \in \mathbb{N}$ and $b \in \mathbb{Z}_q^\times$. For any $x\geq q^{1+2\varepsilon}$, we have
\begin{align*}
\sum_{p\leq x} 1_{p\equiv  b \Mod q}\leq \left(\frac{2}{1-\log q/\log x-\varepsilon}\right)\frac{x}{\varphi(q) \log x}.
\end{align*} 
\end{lemma}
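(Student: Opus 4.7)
The plan is to derive this Brun--Titchmarsh bound from the upper bound linear sieve (Lemma~\ref{le:linear}(ii)) applied to the sequence $\mathcal{A} = (\mathbf{1}_{n \equiv b\,(q)})_{n \in \mathcal{N}_q(x)}$ with density $h(d) = \mathbf{1}_{(d,q) = 1}$ and size $X = x/q$.

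First, I would invoke Lemma~\ref{le:lod} to conclude that $\mathcal{A}$ has level of distribution $\theta = 1 - \log q/\log x - \varepsilon$. The hypothesis~\eqref{eq:h(p)condition} follows from Mertens' theorem (with a constant $C$ of size $\ll q/\varphi(q)$, but this only enters through the lower order terms). Next, I would choose the sieve level $D = x^\theta$ and sifting parameter $z = D^{1/2}$, so that $s = \log D/\log z = 2$, which is precisely at the endpoint where $F_1(2) = e^{\gamma}$ kicks in.

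Any prime $p \in (z,x]$ with $p \equiv b\,(\mathrm{mod}\,q)$ satisfies $(p,P(z)) = 1$, and the primes $\leq z$ in the residue class contribute at most $z/q + 1$. Therefore
\[
\sum_{p \leq x} \mathbf{1}_{p \equiv b\,(q)} \leq S(\mathcal{A}, z) + O(z/q + 1).
\]
Applying the upper bound linear sieve gives
\[
S(\mathcal{A}, z) \leq (e^{\gamma} + o(1)) \frac{x}{q} \prod_{\substack{p < z \\ (p,q) = 1}}\left(1 - \frac{1}{p}\right).
\]
Using $\prod_{p \mid q}(1 - 1/p) = \varphi(q)/q$ and Mertens' theorem, the Euler product is $(1+o(1)) \frac{q}{\varphi(q)} \cdot \frac{e^{-\gamma}}{\log z}$. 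Since $\log z = \tfrac{1}{2}(1 - \log q/\log x - \varepsilon)\log x$, the main term collapses to $(1+o(1))\frac{2x}{\varphi(q)(1-\log q/\log x - \varepsilon) \log x}$. The $O(z/q)$ error is negligible: from $x \geq q^{1+2\varepsilon}$ we have $z \leq (x^{1-\varepsilon}/q)^{1/2}$, whence $z/q \ll x^{(1-\varepsilon)/2} q^{-3/2} = o(x/(\varphi(q) \log x))$.

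There is no real obstacle here; the argument is the classical Selberg/linear-sieve proof of Brun--Titchmarsh adapted to the explicit level of distribution provided by Lemma~\ref{le:lod}. The only point requiring slight care is the bookkeeping of the Euler product to ensure the leading constant emerges as exactly $2/(1-\log q/\log x - \varepsilon)$ rather than being inflated by stray factors of $q/\varphi(q)$; this happens automatically because the $q/\varphi(q)$ factor from restricting the product to $(p,q)=1$ precisely cancels the factor $q$ appearing in the denominator of $X/q \cdot 1/\varphi(q) = X/(\varphi(q))$ after dividing through.
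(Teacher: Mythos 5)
Your proof is correct and follows exactly the route the paper intends (the paper's own proof is just the one-line remark that the bound ``follows from Lemma~\ref{le:lod} and linear sieve, adjusting~$\varepsilon$''): apply the upper-bound linear sieve at $s=2$ so $F_1(2)=e^\gamma$, plug in the level of distribution from Lemma~\ref{le:lod}, and use Mertens. One small point of rigor: the evaluation $\prod_{p<z,\,(p,q)=1}(1-1/p)=(1+o(1))\tfrac{q}{\varphi(q)}\tfrac{e^{-\gamma}}{\log z}$ is an equality only when every prime factor of $q$ lies below $z$ (which can fail here since $z$ may be smaller than $q$); in general it should be stated as the inequality $\leq(1+o(1))\tfrac{q}{\varphi(q)}\tfrac{e^{-\gamma}}{\log z}$, which is all you need, and the residual $(1+o(1))$ factor is absorbed by the ``adjusting~$\varepsilon$'' step the paper alludes to.
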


\begin{proof}
This is the Brun--Titchmarsh inequality which follows from Lemma~\ref{le:lod} and linear sieve (Lemma~\ref{le:linear}) (adjusting $\varepsilon$).  
\end{proof}

Next we will state two propositions which together show that Theorem~\ref{thm_Linnik} holds in case assumption (A4) of Theorem~\ref{thm:non-torsion-special} fails. Before stating them, we need some notation.
For $L_2\geq L_1\geq 1$, define
\[
\eta^-(L_1, L_2) \coloneqq   2\left(1-\frac{L_2}{L_1+L_2-1}\right) \frac{\log\left(\frac{L_2-1}{L_1}\right)}{\log\frac{L_2}{L_1}},
\]
and
\[
\eta^+(L_1, L_2) \coloneqq   \frac{f_2\left(\frac{2L_2-3}{L_1}\right)}{F_2\left(\frac{L_2-3}{L_1}\right)} \cdot \frac{1}{\log\frac{L_2}{L_1}},
\]  
where $f_2$ and $F_2$ are sieve functions for the two-dimensional sieve --- they are taken to be such that
\begin{align*}
\begin{aligned}
\frac{\d}{\d s} (s^2 F_2(s)) &= 2sf_2(s-1) \\
\frac{\d}{\d s} (s^2 f_2(s)) &= 2sF_2(s-1),
\end{aligned}
\end{align*}
and $s^2 F_2(s) = 21.75$ for $s \in [3.84, 5.84]$ and $f_2(s) = 0$ for $s \leq 4.84$ (see~\cite[Theorem 11.12 and the table in Section 11.19]{Opera}).

\begin{proposition}[Few $p$ with $\psi(p) = -1$] \label{prop:exceptional2}
Let $\varepsilon > 0$ and $L_2 \geq L_1 \geq 1$ be fixed and such that
\begin{equation}
\label{eq:L1L2cond-}
L_1 + 1 +\varepsilon \leq L_2 \leq 3L_1 + 1.
\end{equation}
Assume that, for some quadratic character $\psi \Mod{q}$,
\begin{align}\label{eq:eta-assumption}
\sum_{q^{L_1} < p \leq q^{L_2}} \frac{1-\psi(p)}{p} \leq \eta^-(L_1, L_2) \log\frac{L_2}{L_1} - \varepsilon.
\end{align}
Then, for any $a \in \mathbb{Z}_q^\times$ with $\psi(a) = -1$, we have
\[
\sum_{\substack{p \leq q^{L_1+L_2} \\ p \equiv a \Mod{q}}} 1 \gg \frac{q^{L_1+L_2}}{\varphi(q) \log q},
\]
where the implied constant is effectively computable.
\end{proposition}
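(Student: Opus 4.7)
The plan is to apply the linear sieve to $\mathcal{A} = (\mathbf{1}_{n \equiv a \Mod q})_{n \leq N}$ with $N := q^{L_1+L_2}$ and sifting cutoff $z := q^{L_1}$, and then to bound the contribution of composite $n$ to the sifted sum $S(\mathcal{A}, z)$ using the hypothesis~\eqref{eq:eta-assumption}. By Lemma~\ref{le:lod}, $\mathcal{A}$ has level of distribution $\theta = 1 - 1/(L_1+L_2) - \varepsilon'$ with dimension~$1$, size $N/q$, and density $h(d) = \mathbf{1}_{(d,q)=1}$. Taking $D = N^\theta$ in Lemma~\ref{le:linear}, the sieve parameter $s = \log D / \log z = (L_1+L_2-1)/L_1 + O(\varepsilon')$ lies in $(2, 4]$ by~\eqref{eq:L1L2cond-}; note that the upper bound $L_2 \leq 3L_1 + 1$ enters precisely here, to ensure $s \leq 4$ so that the explicit formula $f_1(s) = (2e^\gamma/s) \log(s-1)$ is valid. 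Combined with Mertens' theorem, Lemma~\ref{le:linear}(ii) then gives
\[
S(\mathcal{A}, z) \geq (1+o(1)) \, \frac{2 \log((L_2-1)/L_1)}{L_1 + L_2 - 1} \cdot \frac{N}{\varphi(q) \log q}.
\]

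Next I will upper bound the composite contribution to $S(\mathcal{A}, z)$. Any $n$ counted has all prime factors $\geq q^{L_1}$, and the bound $L_2 \leq 3L_1 + 1$ gives $\omega(n) \leq 4$; if $n$ is composite, then each of its prime factors lies in $(q^{L_1}, q^{L_2}]$. Because $\psi(n) = \psi(a) = -1$, an odd number of these factors have $\psi$-value $-1$, so at least one $p_0 \mid n$ satisfies $\psi(p_0) = -1$. Writing $n = p_0 m$ with $m > 1$, $m \equiv a \overline{p_0} \Mod q$ and $(m, P(q^{L_1})) = 1$, this gives
\[
\#\{n \text{ composite in } S(\mathcal{A}, z)\} \leq \sum_{\substack{q^{L_1} < p_0 \leq q^{L_2} \\ \psi(p_0) = -1}} \#\bigl\{m \leq N/p_0 : m \equiv a\overline{p_0} \Mod q,\ (m, P(q^{L_1})) = 1,\ m > 1\bigr\}.
\]
For each $p_0$, apply the linear sieve upper bound of Lemma~\ref{le:linear}(ii) to the sequence $\{m \leq N/p_0 : m \equiv a\overline{p_0} \Mod q\}$, whose level of distribution is $\approx N/(p_0 q)$ analogously to Lemma~\ref{le:lod}. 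Since $p_0 \leq q^{L_2}$ gives $\log(N/(p_0 q)) \geq (L_1-1)\log q$, this yields the uniform bound
\[
\#\{m \leq N/p_0 : \ldots\} \leq (2+o(1)) \, \frac{N/p_0}{\varphi(q) (L_1 - 1) \log q}.
\]

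Then, summing over $p_0$ and inserting~\eqref{eq:eta-assumption} in the form $\sum_{p_0} 1/p_0 \leq \tfrac{1}{2}(\eta^-(L_1, L_2) \log(L_2/L_1) - \varepsilon)$, and using the crucial identity
\[
\frac{\eta^-(L_1, L_2) \log(L_2/L_1)}{L_1 - 1} = \frac{2 \log((L_2-1)/L_1)}{L_1 + L_2 - 1},
\]
which is immediate from the definition of $\eta^-$, we obtain
\[
\#\{n \text{ composite in } S(\mathcal A,z)\} \leq (1+o(1)) \left(\frac{2 \log((L_2-1)/L_1)}{L_1+L_2-1} - \frac{\varepsilon}{L_1-1}\right) \frac{N}{\varphi(q) \log q}.
\]
Subtracting from the lower bound on $S(\mathcal{A}, z)$ yields the desired
\[
\sum_{\substack{p \leq N \\ p \equiv a \Mod q}} 1 \geq S(\mathcal{A}, z) - \#\{n \text{ composite in } S(\mathcal A,z)\} \gg \frac{N}{\varphi(q) \log q},
\]
with effective implied constant since every input is effective.

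The delicate feature of the plan --- and the main obstacle --- is that the upper bound for the composite contribution matches the linear sieve lower bound for $S(\mathcal{A}, z)$ \emph{exactly}: the exponent $\eta^-(L_1, L_2)$ has been calibrated precisely so that this equality holds. Consequently, the argument succeeds only thanks to the strict $\varepsilon$-slack present in~\eqref{eq:eta-assumption}, which furnishes the positive gap between the two bounds. A secondary technical point is justifying the uniform linear sieve upper bound for the inner count over the full range of $p_0$, especially when $N/p_0$ is close to $q^{L_1}$; this can be handled by switching to Brun--Titchmarsh (Lemma~\ref{le:B-T}) directly when $N/p_0 < q^{2L_1}$, at which point $m$ is forced to be prime.
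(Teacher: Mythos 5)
Your proposal follows the same overall strategy as the paper's proof: a Buchstab-type decomposition of the sifted sum $S(\mathcal{A}, q^{L_1})$, a linear sieve lower bound for it, a linear sieve upper bound for the inner count over $m = n/p_0$, and the observation that the definition of $\eta^-$ is calibrated so that the main terms exactly cancel and the $\varepsilon$-slack in~\eqref{eq:eta-assumption} provides the margin. Your identity $\eta^-(L_1,L_2)\log(L_2/L_1)/(L_1-1) = 2\log((L_2-1)/L_1)/(L_1+L_2-1)$ is correct, and the main-term computation via $f_1(s) = (2e^\gamma/s)\log(s-1)$ on $s\in[2,4]$ matches the paper.

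There is, however, a gap in the inner upper-bound step that your proposal does not fully close. You invoke ``the linear sieve upper bound of Lemma~\ref{le:linear}(ii)'' for the sequence $(\mathbf{1}_{m\equiv a\overline{p_0}\Mod q})_{m \le N/p_0}$ with cutoff $z=q^{L_1}$, but Lemma~\ref{le:linear} requires $s = \log D/\log z \ge 2$, and with $D\approx N/(p_0q)$ and $z=q^{L_1}$ one has $s = (L_1+L_2-1-\rho)/L_1$, which drops below $2$ once $\rho > L_2-L_1-1$. Your proposed fallback (pass to Brun--Titchmarsh when $N/p_0 < q^{2L_1}$, i.e.\ when $\rho > L_2 - L_1$) forces $m$ prime there and handles that subrange, but it leaves a nonempty window $\rho \in (L_2-L_1-1, L_2-L_1]$ where $m$ can still be a semiprime and $s<2$. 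Conversely, for small $\rho$ and $L_2 > 2L_1+1$ the Brun--Titchmarsh cutoff $D^{1/2}$ exceeds $q^{L_1}$, so sieving to $D^{1/2}$ does not dominate the count you actually need. The paper resolves both issues at once by choosing the cutoff $z' = (q^{L_1+L_2-1-\varepsilon'}/p_0)^{1/3}$, i.e.\ fixing $s=3$: then $s\ge 2$ automatically, and $z' < q^{L_1}$ holds for all $\rho\in[L_1,L_2]$ precisely because of the hypothesis $L_2 \le 3L_1+1$ (which you attribute only to the validity of $f_1$ on $[2,4]$; in fact it is also doing this second job). Since $F_1(s)\prod_{p<D^{1/s}}(1-1/p) \sim 2/\log D$ for all $s\in[2,3]$, taking $s=3$ gives the same numerical constant as the Brun--Titchmarsh form you wrote down, so the fix does not change any estimate --- but it is the step that needs to be made explicit for the proof to be complete.
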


\begin{proposition}[Few $p$ with $\psi(p) = 1$] \label{prop:exceptional1}
Let $L_2 \geq L_1 \geq 1$ be fixed. Assume that for some quadratic character $\psi \Mod{q}$ and some fixed $\varepsilon > 0$,
\begin{align}\label{eq:eta+assumption}
\sum_{q^{L_1} < p \leq q^{L_2}} \frac{1+\psi(p)}{p} \leq \eta^+(L_1, L_2) \log\frac{L_2}{L_1} - \varepsilon.
\end{align}
Then, for any $a \in \mathbb{Z}_q^\times$ with $\psi(a) = 1$, we have
\[
\sum_{\substack{p \leq q^{2 L_2} \\ p \equiv a \Mod{q}}} 1 \gg \frac{q^{2L_2}}{\varphi(q)} L(1, \psi) \prod_{\substack{p < q^{L_1} \\ \psi(p) = 1}} \left(1-\frac{2}{p}\right),
\]
where the implied constant is effectively computable.
\end{proposition}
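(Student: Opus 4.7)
The plan is to adapt the Friedlander--Iwaniec treatment of Linnik's problem under a Siegel-zero-like hypothesis (see~\cite[Chapter~24]{Opera}) to the present setting. The shape of the target bound is revealing: $L(1,\psi)\prod_{p<q^{L_1},\,\psi(p)=1}(1-2/p)$ is the main term of a two-dimensional lower-bound sieve in which small primes $p$ with $\psi(p)=+1$ carry sieve density $2$ (since, for $n\equiv a\Mod q$ with $\psi(a)=+1$ and $\psi(p)=+1$, both $n\equiv 0\Mod p$ and the dual ``bad'' residue need to be excluded), while primes with $\psi(p)=-1$ contribute essentially nothing. The hypothesis~\eqref{eq:eta+assumption} is exactly the saving in the intermediate range $(q^{L_1},q^{L_2}]$ needed to close such a sieve with a positive constant.

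Concretely, I would first apply a Buchstab-type decomposition
\[
\pi(q^{2L_2};q,a)\;\ge\;S(\mathcal A_a,q^{L_1})\;-\sum_{q^{L_1}\le p<q^{L_2}}S(\mathcal A_{a,p},p),
\]
where $\mathcal A_a=(\mathbf 1_{n\equiv a\,(q)})_{n\le q^{2L_2}}$ and $\mathcal A_{a,p}$ is its subsequence with $p\mid n$. Next, I would bound $S(\mathcal A_a,q^{L_1})$ from below using a lower linear sieve with the $\psi$-twisted density $h(p)=1+\psi(p)$, producing the factor $f_2((2L_2-3)/L_1)$ once the level of distribution from Lemma~\ref{le:lod} and the sifting parameter $z=q^{L_1}$ are inserted. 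For each subtracted almost-prime sum $S(\mathcal A_{a,p},p)$, I would apply the upper linear sieve (again with the twisted density) to obtain a factor $F_2((L_2-3)/L_1)$, and then use~\eqref{eq:eta+assumption} to control the outer sum over $p\in(q^{L_1},q^{L_2}]$. After reorganizing the truncated Euler products that emerge, the lower bound takes the stated form $\gg q^{2L_2}\varphi(q)^{-1}L(1,\psi)\prod_{p<q^{L_1},\,\psi(p)=1}(1-2/p)$.

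The main obstacle is to treat the sign-twisted density $h(p)=1+\psi(p)$ rigorously inside the linear-sieve framework, since Lemma~\ref{le:linear} is stated for nonnegative $h$ satisfying the one-dimensional condition~\eqref{eq:h(p)condition}. One must either extend the linear sieve with such a twisted density (essentially a weighted combination of two one-dimensional sieves, which is why $f_2,F_2$ rather than $f_1,F_1$ appear) or split the problem into the $\psi(p)=+1$ and $\psi(p)=-1$ contributions and handle them separately while bookkeeping the effect on the Euler product that eventually recovers $L(1,\psi)$. A further delicate point is that Lemma~\ref{le:lod} supplies only level of distribution $1-\log q/\log x-\varepsilon$; pushing the sieve variables $(2L_2-3)/L_1$ and $(L_2-3)/L_1$ out to where $f_2$ and $F_2$ are favorable may require a Burgess-type improvement to the level of distribution, as already flagged in Remark~\ref{rem:Burgess}.
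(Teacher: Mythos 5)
Your overall shape is right---a Buchstab decomposition that removes primes from $(q^{L_1},q^{L_2}]$, a lower two-dimensional sieve bound $f_2$ on the un-removed part and an upper bound $F_2$ on the removed part, and the hypothesis~\eqref{eq:eta+assumption} bounding the outer sum. You also correctly identify the crux: Lemma~\ref{le:linear} requires a nonnegative density, and you want something that behaves like $h(p)\approx 1+\psi(p)$ so that the sieve becomes two-dimensional on primes with $\psi(p)=1$ and trivial on primes with $\psi(p)=-1$. But what you propose to do with $\mathcal A_a=(\mathbf 1_{n\equiv a\,(q)})$ is not coherent: that sequence has density $h(p)=1$, full stop, and you cannot simply \emph{choose} to run the sieve with a different $h$. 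Your two proposed fixes (``extend the linear sieve with a sign-twisted density'' or ``split by the sign of $\psi(p)$'') do not resolve this; the first is not a thing, and the second loses the multiplicative structure that makes the $L(1,\psi)$ and the truncated Euler product come out.

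The missing idea, which is exactly the Friedlander--Iwaniec trick you gesture at but do not execute, is to change the \emph{sequence}, not the density. Since $\psi$ is quadratic, $1\ast\psi\ge 0$, and for primes $(1\ast\psi)(p)=1+\psi(p)$, so
\[
2\sum_{\substack{p\le q^{2L_2}\\ p\equiv a\,(q)}}1\;\ge\;\sum_{\substack{p\le q^{2L_2}\\ p\equiv a\,(q)}}(1\ast\psi)(p),
\]
and one sieves the nonnegative weighted sequence $\big((1\ast\psi)(n)\,\mathbf 1_{n\equiv a\,(q)}\big)_{n\le q^{2L_2}}$. The point of Lemmas~\ref{le:1astpsiAP},~\ref{le:fastgdec}, and~\ref{le:1astchi_dAP} is precisely to show that this sequence has level of distribution $(L'-3-\varepsilon')/L'$ with size $2L(1,\psi)q^{L'-1}$ and an honestly nonnegative multiplicative density $h(p)=\frac{1+\psi(p)}{p}-\frac{\psi(p)}{p^2}$, which is exactly what you wanted but arises here because it really \emph{is} the density of the sieved sequence. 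After Buchstab (with the relaxation $p\mid n\Rightarrow p\ge p_1$ to $(n,P(q^{L_1}))=1$), the two-dimensional sieve of \cite[Theorem~11.12]{Opera} gives the $f_2\big(\frac{2L_2-3-\varepsilon'}{L_1}\big)$ and $F_2\big(\frac{L_2-3-\varepsilon'}{L_1}\big)$ factors, and the Euler product $\prod_{p<q^{L_1}}(1-h(p))$ collapses to $\prod_{p<q^{L_1},\psi(p)=1}(1-2/p)$ up to bounded factors, giving the claimed constant. Finally, $L(1,\psi)\gg q^{-1/2}$ (Lemma~\ref{le_MV}) is needed to absorb the $O(x^{1/2+\varepsilon}/d^{1/2})$ error from the hyperbola method into the main term; this is also why the level of distribution loses $3$ in the exponent. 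Your worry that the argument ``may require a Burgess-type improvement'' is therefore misplaced: the definitions of $\eta^+$ (with the arguments $(2L_2-3)/L_1$ and $(L_2-3)/L_1$) are already calibrated to the hyperbola-method level, and no input beyond the classical $L(1,\psi)\gg q^{-1/2}$ is used.
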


\begin{remark}
\label{rem:eta+-impr}
It would be possible to improve on the results in Proposition~\ref{prop:exceptional2} and Proposition~\ref{prop:exceptional1} by more elaborate arguments. In particular the use of the two-dimensional sieve in proof of Proposition~\ref{prop:exceptional1} is wasteful since the case we are concerned about is that there are lot of primes with $\psi(p) = -1$ which should make the sieve low-dimensional, at least with respect to primes $p \in (q^{L_1},q^{L_2}]$. It would be possible to deduce that one can either improve on the sieve bound or deduce Linnik's theorem for a smaller exponent by studying more carefully the structure of the sieve coefficients.
\end{remark}

\subsection{Proof of Theorem~\ref{thm_Linnik} assuming Propositions \ref{prop:exceptional1} and \ref{prop:exceptional2}}

We shall apply Theorem~\ref{thm:non-torsion-special} with $G = \mathbb{Z}_q^\times$, $a\in G$, $\mathcal{N} = \mathcal{N}_q,$ $f\colon \mathcal{N}_q \to G$ given by $f(n)=n\Mod q$, and $h$ such that $h(n) = 1$ for every $n \in \mathcal{N}_q$. We take $Y = q$ and have $|G| = \varphi(q)$.

Write $L = \Lv$ and let $x = q^B$ with $B \in [L/2, L]$, and let $\varepsilon' > 0$ be small. By Lemma~\ref{le:lod} Theorem~\ref{thm:non-torsion-special}(A1) holds with $\theta = \theta_B \colon [1/2, 1] \to (1/2, 1)$ defined by
\[
\theta_B(\rho) \coloneqq   1-\frac{1}{\rho B} - \varepsilon'
\]
On the other hand, by Lemma~\ref{le:B-T} and partial summation we have, for any $b \in \mathbb{Z}_q^\times$ and any bounded $L_2 > L_1 > 1$,
\begin{align}
\label{eq:kappacalc}
\begin{aligned}
\sum_{\substack{q^{L_1} < p \leq q^{L_2} \\ p \equiv b \Mod{q}}} \frac{1}{p} &\leq \frac{2}{\varphi(q)} \int_{q^{L_1}}^{q^{L_2}} \frac{1}{t \log t(1-\frac{\log q}{\log t}-\varepsilon')} \d t = \frac{2}{\varphi(q)} \int_{L_1}^{L_2} \frac{1}{\beta (1-\frac{1}{\beta})} \d\beta + O\left(\frac{\varepsilon'}{\varphi(q)}\right) \\
& = \frac{2}{\varphi(q)}\left(\log\frac{L_2-1}{L_1-1} + O(\varepsilon') \right).
\end{aligned}
\end{align}
Hence Theorem~\ref{thm:non-torsion-special}(A2) holds with
\[
\kappa = \kappa_B \coloneqq   2 \frac{\log\left(\frac{\frac{B}{3}-1}{\frac{B}{6}-1}\right)}{\log 2} +O(\varepsilon') = 2 + 2\frac{\log\left(\frac{B-3}{B-6}\right)}{\log 2} + O(\varepsilon').
\]
Now if Theorem~\ref{thm:non-torsion-special}(A4) holds with some $\eta$, then, recalling Remark~\ref{rem:SpecSimpleMT}, we obtain
\begin{align*}
\begin{aligned}
\sum_{\substack{q^{B/2} < p \leq q^B \\ p \equiv a \Mod{q}}} \frac{1}{p} &\geq \frac{1}{\varphi(q)}\left(\frac{\log^3 2}{3} \mathcal{F}\left(\frac{1}{\kappa_B},\frac{\eta}{\kappa_B}\right) - \int_{1/2}^1 \int_{\theta_B(\rho)/2}^{1/2} \frac{2}{\theta_B(\rho)-\beta} \frac{\d\beta}{\beta} \frac{\d\rho}{\rho} + o(1) \right) \\
& \eqqcolon \frac{1}{\varphi(q)} \mathcal{C}(\eta, \kappa_B, \theta_B),
\end{aligned}
\end{align*}
say. A computation reveals that, once $\varepsilon'$ is sufficiently small, for every $B \in [L/2, L]$, we have
\[
\mathcal{C}(0.249, \kappa_B, \theta_B) > 0.
\] 
Hence Theorem~\ref{thm:non-torsion-special} implies the claim unless Theorem~\ref{thm:non-torsion-special}(A4) fails with $\eta = 0.249$ for $x = q^B$ for every $B \in [L/2, L]$. Hence we can assume that, for each $B \in [L/2, L]$, there exists a subgroup $H_B \leq \mathbb{Z}_q^\times$ of index $2$ such that
\begin{equation}
\label{eq:aHup}
\sum_{\substack{q^{B/6} < p \leq q^{B/3} \\ p \in aH_B}} \frac{1}{p} <  0.249 \cdot \frac{\log 2 + o(1)}{2}.
\end{equation}

For any $B \in [L/2, L]$ we have $\eta^-(B/6, B/3) \geq 0.62$. Hence the claim holds by Proposition~\ref{prop:exceptional2} if $a \not \in H_B$ for some $B \in [L/2, L]$, so we can assume that $a \in H_B$ for every $B \in [L/2, L]$.

Consider next the case $H_{L/2} = H_L$. Then
\[
\sum_{\substack{q^{B/12} < p \leq q^{B/3} \\ p \in aH_B}} \frac{1}{p} <  0.249 \cdot \frac{\log 4 + o(1)}{2}.
\]
But we have $\eta^+(L/12, L/3) \geq 0.25$ so in this case the claim holds by Proposition~\ref{prop:exceptional1}.

Hence we can assume that $H_{L/2} \neq H_L$. Consequently we can find $B_1, B_2 \in [L/2, L]$ such that 
\[
B_1 < B_2 \leq 1.01 B_1
\] 
and $H_{B_1} \neq H_{B_2}$. Write $I = [1.01B_1/6, B_1/3]$ and notice that $I \subseteq [B_j/6, B_j/3]$ for $j = 1, 2$. Hence by~\eqref{eq:aHup}, we obtain that, for $j = 1, 2$, 
\begin{equation}
\label{eq:aHjup}
\sum_{\substack{p \in q^I \\ p \in H_{B_j}}} \frac{1}{p} < 0.249 \cdot \frac{\log 2 + o(1)}{2} < \frac{0.125 \cdot \log 2+o(1)}{\log(2/1.01)} \cdot \sum_{\substack{p \in q^I}} \frac{1}{p} < (0.13 + o(1)) \cdot \sum_{\substack{p \in q^I}} \frac{1}{p}
\end{equation}
Let 
\[
g(b) \coloneqq   \frac{\varphi(q)}{2 \log\left(\frac{B_1/3-1}{1.01B_1/6-1}\right)+\varepsilon'} \sum_{\substack{p \in q^I \\ p \equiv b \Mod{q}}} \frac{1}{p}
\]
By~\eqref{eq:kappacalc} we see that $g(b) \leq 1$ for every $b \in \mathbb{Z}_q^\times$ and by Mertens' theorem
\[
\mathbb{E}_{b\in \mathbb{Z}_q^\times} g(b) = \frac{\log \frac{2}{1.01}+o(1)}{2 \log\left(\frac{B_1/3-1}{1.01B_1/6-1}\right)+\varepsilon'}. 
\]
Similarly, by~\eqref{eq:aHjup} we also see that for $j \in \{1,2\}$
\begin{align*}
\frac{1}{|G|} \sum_{b \in H_{B_j}} g(b) \leq 0.13 \cdot \frac{\log \frac{2}{1.01}+o(1)}{2 \log\left(\frac{B_1/3-1}{1.01B_1/6-1}\right)+\varepsilon'}.
\end{align*}
Let $c \in G$ such that $c \notin H_{B_1}, H_{B_2}$ and note that $H_{B_1} \cap H_{B_2}$ is a subgroup of index $4$ and
\[
G \setminus (cH_{B_1} \cup cH_{B_2}) =  H_{B_1} \cap H_{B_2}.
\] 
Thus, using $g(b) \leq 1$ we get by the inclusion-exclusion principle
\begin{align*}
\frac{1}{4} &\geq \frac{1}{|G|}\sum_{b \in  c(  H_{B_1} \cap  H_{B_2})} g(b)= \frac{1}{|G|}\sum_{b \in  ( c H_{B_1} \cap c H_{B_2})} g(b)   \\
&= \frac{1}{|G|}\sum_{b \in  G} g(b) -   \frac{1}{|G|}\sum_{b \in   H_{B_1}} g(b) - \frac{1}{|G|}\sum_{b \in   H_{B_2}} g(b)  +  \sum_{b \in H_{B_1} \cap H_{B_2}}g(b)\\
&\geq \frac{1}{|G|} \sum_{b \in G} g(b) - \frac{1}{|G|}\sum_{b \in H_{B_1}} g(b) - \frac{1}{|G|}\sum_{b \in H_{B_2}} g(b) \\
&\geq (1-2 \cdot 0.13) \cdot \frac{\log \frac{2}{1.01}+o(1)}{2 \log\left(\frac{B_1/3-1}{1.01B_1/6-1}\right)+\varepsilon'} \geq (1-2 \cdot 0.13) \cdot 0.4876  \geq 0.36,
\end{align*}
which is a contradiction. \qed

\subsection{Proof of Proposition \ref{prop:exceptional2}}
We first prove Proposition~\ref{prop:exceptional2} which is easier. Let $\varepsilon' > 0$ be sufficiently small in terms of $\varepsilon$. Lemma~\ref{le:lod} implies that, for any $b \in \mathbb{Z}_q^\times$ and any $L' \in [1, L_1 +L_2]$, the sequence $(1_{n \equiv b \pmod{q}})_{n \in \mathcal{N}_q(q^{L'})}$ has level of distribution $(L'-1-\varepsilon')/L'$ with dimension $1$, size $q^{L'-1}$ and density function $h(d)=1$ for $d \in \mathcal{N}_q$. 

Note that, since $\psi(a) = -1$, if $n \equiv a \pmod{q}$, then $n$ has at least one prime factor for which $\psi(p)=-1$. Thus
\begin{align*}
&\sum_{\substack{p \leq q^{L_1+L_2} \\ p \equiv a \pmod{q}}} 1 \geq \sum_{\substack{n \leq q^{L_1+L_2}\\ n \equiv a \pmod{q} \\ (n, P(q^{L_1})) = 1}} 1 - \sum_{\substack{q^{L_1} < p \leq q^{L_2} \\ \psi(p) = -1}} \sum_{\substack{n \leq q^{L_1+L_2}/p \\ n \equiv a\overline{p} \pmod{q} \\ p' \mid n \implies p' \in [q^{L_1}, q^{L_2}]}} 1.
\end{align*}
By~\eqref{eq:L1L2cond-} we have $(L_1+L_2-1-\rho)/3 < L_1$ for every $\rho \in [L_1, L_2]$ and thus we can replace the condition $p' \mid n \implies p' \in [q^{L_1}, q^{L_2}]$ by the weaker condition $(n, P((q^{L_1+L_2-1-\varepsilon'}/p)^{1/3}) = 1$. We then obtain by the linear sieve (Lemma \ref{le:linear} and values of $f_1(s)$ and $F_1(s)$) a lower bound
\begin{align*}
\sum_{\substack{p \leq q^{L_1+L_2} \\ p \equiv a \pmod{q}}} 1 &\geq \frac{q^{L_1+L_2}}{q} \left(f_1\left(\frac{L_1+L_2-1-\varepsilon'}{L_1}\right) + o(1)\right) \prod_{\substack{p < q^{L_1} \\ p \nmid q}} \left(1-\frac{1}{p}\right) \\
& \qquad - \frac{q^{L_1+L_2}}{q} \sum_{\substack{q^{L_1} < p \leq q^{L_2} \\ \psi(p) = -1}} \frac{1}{p} (F_1(3)+o(1)) \prod_{\substack{p' < (q^{L_1+L_2-1-\varepsilon'}/p)^{1/3} \\ p' \nmid q}} \left(1-\frac{1}{p}\right) \\
&\geq 2\frac{q^{L_1+L_2}}{\varphi(q)} \left(\frac{\log\left(\frac{L_1+L_2-1-\varepsilon'}{L_1}-1\right)}{L_1+L_2-1-\varepsilon'}  - \sum_{\substack{q^{L_1} < p \leq q^{L_2} \\ \psi(p) = -1}} \frac{1}{p(L_1+L_2-1-\varepsilon'-\frac{\log p}{\log q})} + o(1)\right) \\
&\geq 2\frac{q^{L_1+L_2}}{\varphi(q)} \left(\frac{\log\left(\frac{L_2-1-\varepsilon'}{L_1}\right)}{L_1+L_2-1-\varepsilon'}  - \frac{\eta^-(L_1, L_2) \log\left(\frac{L_2}{L_1}\right)-\varepsilon}{2(L_1-1-\varepsilon')} + o(1)\right), \\
\end{align*}
and the claim follows once $\varepsilon'$ is sufficiently small. \qed

\subsection{Preparations for the proof of Proposition~\ref{prop:exceptional1}}
The proof of Proposition~\ref{prop:exceptional1} is more involved. We will sieve the sequence $1 \ast \psi$ and thus need a level of distribution result for it. Following~\cite[Section 24.2]{Opera} we establish the following.

\begin{lemma}
\label{le:1astchi_dAP} Let $\varepsilon > 0$. Let $q \in \mathbb{N}$ and let $\psi$ be a quadratic character $\Mod{q}$. Let $a \in \mathbb{Z}_q^\times$, and let $d \in \mathbb{N}$ and $x \in \mathbb{R}_+$ be such that $x \geq \max\{q, d\}$.
Then
\begin{align}
\label{eq:1astchi_d}
\sum_{\substack{n \leq x \\ \d \mid n \\ n \equiv a \Mod{q}}} (1 \ast \psi)(n) = \mathbf{1}_{(d, q) = 1} (1+\psi(a)) h(d) L(1, \psi) \frac{x}{q} + O_\varepsilon\left(\frac{x^{1/2+\varepsilon}}{d^{1/2}}\right),
\end{align}
where $h$ is a multiplicative function such that, for every $p \in \mathbb{P}$,
\begin{align}
\label{eq:hdef}
h(p) = \frac{1+\psi(p)}{p}-\frac{\psi(p)}{p^2} \quad \text{and} \quad |h(p^k)| \leq \frac{2k}{p^k} \, \text{ for any $k \in \mathbb{N}$.}
\end{align}
\end{lemma}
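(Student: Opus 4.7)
The plan is to apply Dirichlet's hyperbola method after decomposing the divisibility condition $d \mid n$. Writing $(1 \ast \psi)(n) = \sum_{mb = n} \psi(m)$, the sum becomes
\[
S = \sum_{\substack{mb \leq x \\ d \mid mb \\ mb \equiv a \Mod q}} \psi(m).
\]
If $(d, q) > 1$, then $d \mid mb$ together with $mb \equiv a \Mod q$ forces $(mb, q) > 1 = (a, q)$, a contradiction, so assume $(d, q) = 1$. For each $\delta \mid d$, set $D := d/\delta$ and decompose uniquely as $m = \delta m'$ with $(m', D) = 1$ and $b = Db'$; this records both $(m, d) = \delta$ and $d \mid mb$. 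Since $mb = d m' b'$, the congruence becomes $m' b' \equiv a\overline{d} \Mod q$, giving
\[
S = \sum_{\delta \mid d} \psi(\delta)\, T_\delta, \qquad T_\delta := \sum_{(m', Dq) = 1} \psi(m) \,\#\!\left\{b' : m' b' \leq x/d,\ m' b' \equiv a\overline{d} \Mod q\right\}.
\]

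To each $T_\delta$ I would apply the hyperbola method with splitting parameter $y = (x/d)^{1/2}$. On the short ``$m'$-side'' ($m' \leq y$), the inner count over $b'$ is $x/(dm'q) + O(1)$; detecting $(m', D) = 1$ by M\"obius inversion and estimating $\sum_{m \leq y/e} \psi(m)/m = L(1, \psi) + O_\varepsilon(e q^{1/2+\varepsilon}/y)$ via P\'olya--Vinogradov and partial summation yields the main term $\frac{x}{dq} L(1, \psi) \prod_{p \mid D}(1 - \psi(p)/p)$. On the short ``$b'$-side'' ($b' \leq y$), the congruence fixes $\psi(m') = \psi(a)\psi(d)\psi(b')$ on the relevant residue class, so the inner $m'$-sum reduces to counting integers in an arithmetic progression; a second M\"obius inversion for $(m', D) = 1$ gives the main term $\psi(a)\psi(d) \cdot \frac{x}{dq} L(1, \psi) \cdot \varphi(D)/D$. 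The subtracted overlap is negligible by another P\'olya--Vinogradov bound, and aggregating the $O(1)$ counting errors over $\tau(D)$ divisors $e$ and $\tau(d)$ divisors $\delta$ yields (since $d \leq x$) the claimed total error $O_\varepsilon(x^{1/2+\varepsilon}/d^{1/2})$.

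Summing $\psi(\delta)\, T_\delta$ over $\delta \mid d$, the main term of $S$ takes the shape $\frac{x}{dq} L(1, \psi)\, [A_d + \psi(a)\psi(d)\, B_d]$, where
\[
A_d = \sum_{\delta \mid d} \psi(\delta) \!\prod_{p \mid d/\delta}\!\left(1 - \frac{\psi(p)}{p}\right), \qquad B_d = \sum_{\delta \mid d} \psi(\delta)\, \frac{\varphi(d/\delta)}{d/\delta}
\]
are both multiplicative in $d$. A direct computation at prime powers $p^k$ (using $\psi(p) \in \{-1, +1\}$ because $(p, q) = 1$ for $p \mid d$) shows $A_{p^k} = \psi(p^k)\, B_{p^k}$, and multiplicativity upgrades this to $A_d = \psi(d)\, B_d$. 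The main term thus collapses to $(1 + \psi(a))\,(A_d/d)\, L(1, \psi)\, (x/q)$, so setting $h(d) := A_d/d$ gives a multiplicative function; evaluating at $d = p$ yields the formula $h(p) = (1+\psi(p))/p - \psi(p)/p^2$, while the prime-power calculation at $d = p^k$ produces (separately in the cases $\psi(p) = +1$ and $\psi(p) = -1$ with $k$ of each parity) the bound $|h(p^k)| \leq (k+1)/p^k \leq 2k/p^k$.

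The main obstacle is not any single estimate but the combinatorial identification at the end: recognizing that the two a priori unrelated-looking contributions $A_d$ and $\psi(a)\psi(d)\, B_d$ from the two halves of the hyperbola must merge, via the nontrivial identity $A_d = \psi(d)\, B_d$, into a single multiplicative function $h(d)$ multiplied by $(1 + \psi(a))$. Once this identity is verified at prime powers, multiplicativity and standard P\'olya--Vinogradov bookkeeping together deliver the asserted main term and the error $O_\varepsilon(x^{1/2+\varepsilon}/d^{1/2})$.
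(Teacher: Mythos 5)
Your proof is correct, but it follows a genuinely different route from the paper's. The paper proves the case $d=1$ separately (Lemma~\ref{le:1astpsiAP}, a hyperbola argument at modulus $q$ with the symmetry $(1\ast\psi)(n)=(1+\psi(n))\sum_{k\ell=n,\,k<\ell}\psi(k)$), and then establishes an abstract convolution factorization (Lemma~\ref{le:fastgdec}): for completely multiplicative $f,g$ one has $(f\ast g)(mn)=\sum_{r\mid(m,n)}\mu(r)f(r)g(r)(f\ast g)(m/r)(f\ast g)(n/r)$. Applying this with $f=1$, $g=\psi$ reduces the general-$d$ sum to the $d=1$ case in one algebraic step, and $h(d)=\sum_{r\mid d}\frac{\mu(r)\psi(r)}{dr}(1\ast\psi)(d/r)$ drops out directly. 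You instead keep the constraint $d\mid mb$ inside and decompose by $\delta=(m,d)$, run the hyperbola method for each $T_\delta$ separately (with M\"obius inversion for coprimality to $D=d/\delta$ on both sides of the hyperbola), and then must unify the two contributions $A_d$ and $\psi(a)\psi(d)B_d$ via the identity $A_d=\psi(d)B_d$, verified at prime powers. Both approaches yield the same multiplicative $h$ (I checked that your $A_d/d$ agrees with the paper's formula at $p$ and $p^2$) and the same $O_\varepsilon(x^{1/2+\varepsilon}/d^{1/2})$ error after summing the $\tau(d)$ pieces. The paper's factorization lemma is the cleaner bookkeeping device: it isolates the analytic estimate at $d=1$ and makes the multiplicativity of $h$ transparent; your approach avoids proving Lemma~\ref{le:fastgdec} at the cost of a more delicate final combinatorial identification (which, to your credit, you flag explicitly as the main obstacle and resolve correctly).
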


Before turning to the proof of Lemma~\ref{le:1astchi_dAP} we establish two lemmas from which the proof will quickly follow. The first lemma concerns the case $d= 1$ of Lemma~\ref{le:1astchi_dAP}.

\begin{lemma}
\label{le:1astpsiAP}
Let $\varepsilon>0$ be fixed. Let $q \in \mathbb{N}$ and $a \in \mathbb{Z}_q^\times$. Let $\psi$ be a quadratic character $\Mod{q}$ and let $x \geq q$. Then
\[
\sum_{\substack{n \leq x \\ n \equiv a\Mod{q}}} (1 \ast \psi)(n) = (1+\psi(a))L(1, \psi) \frac{x}{q} + O(x^{1/2})
\]
\end{lemma}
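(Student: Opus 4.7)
\textbf{Proof plan for Lemma~\ref{le:1astpsiAP}.}

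The natural approach is the Dirichlet hyperbola method. Writing $(1\ast\psi)(n)=\sum_{de=n}\psi(d)$, the sum we want to evaluate becomes
\[
\Sigma \coloneqq \sum_{\substack{de\leq x\\ de\equiv a\,(q)}}\psi(d).
\]
Since $(a,q)=1$, the congruence $de\equiv a\pmod{q}$ forces $(d,q)=(e,q)=1$, so we may freely invert residues modulo $q$. The plan is to split $\Sigma=\Sigma_1+\Sigma_2$ according to $d\leq\sqrt{x}$ and $e\leq\sqrt{x}$ (with $d>\sqrt{x}$ in $\Sigma_2$), evaluate each piece, and control the error by Pólya--Vinogradov.

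For $\Sigma_1$, the inner sum is a count in an arithmetic progression: for $(d,q)=1$,
\[
\#\{e\leq x/d\colon e\equiv a\bar d\pmod q\}=\frac{x}{qd}+O(1),
\]
so $\Sigma_1=\frac{x}{q}\sum_{d\leq\sqrt x}\frac{\psi(d)}{d}+O(\sqrt x)$. For $\Sigma_2$, the inner sum over $d$ in a single residue class mod $q$ is $\psi(a\bar e)$ times a count, and using $\psi(\bar e)=\psi(e)$ gives
\[
\Sigma_2=\psi(a)\frac{x}{q}\sum_{e\leq\sqrt x}\frac{\psi(e)}{e}-\psi(a)\frac{\sqrt x}{q}\sum_{e\leq\sqrt x}\psi(e)+O(\sqrt x).
\]
The subtracted term comes from the lower endpoint $d>\sqrt x$ in $\Sigma_2$; by Pólya--Vinogradov, $|\sum_{e\leq\sqrt x}\psi(e)|\ll q^{1/2}\log q$, so this contributes $O(\sqrt x\,\log q/\sqrt q)=O(\sqrt x)$.

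It remains to replace $\sum_{d\leq\sqrt x}\psi(d)/d$ by $L(1,\psi)$. Partial summation from Pólya--Vinogradov gives
\[
\sum_{d>\sqrt x}\frac{\psi(d)}{d}\ll \frac{q^{1/2}\log q}{\sqrt x},
\]
so multiplied by $x/q$ this tail contributes $O(\sqrt{x/q}\,\log q)=O(\sqrt x)$ using $x\geq q$. The same estimate applies to the $\psi(a)$-term. Combining everything yields
\[
\Sigma=(1+\psi(a))L(1,\psi)\frac{x}{q}+O(\sqrt x),
\]
which is the stated asymptotic. The only real bookkeeping issue is tracking error terms carefully through the hyperbola split so that the Pólya--Vinogradov savings match the desired $O(x^{1/2})$; this is where the hypothesis $x\geq q$ is essential.
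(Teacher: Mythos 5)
Your proof is correct and follows essentially the same route as the paper: a Dirichlet hyperbola split, with $L(1,\psi)$ recovered by truncating $\sum\psi(d)/d$ at $\sqrt{x}$ and bounding the tail via cancellation in the character sum. The paper streamlines the bookkeeping by first symmetrizing, $(1\ast\psi)(n)=(1+\psi(n))\sum_{k\ell=n,\,k<\ell}\psi(k)+O(\mathbf{1}_{\sqrt{n}\in\mathbb{N}})$, so only one double sum needs estimating, and it uses only the trivial bound $|\sum_{n\le y}\psi(n)|\le\varphi(q)$ rather than P\'olya--Vinogradov --- which in fact already suffices for all of your error terms as well (e.g.\ the tail $\sum_{d>\sqrt{x}}\psi(d)/d\ll\varphi(q)/\sqrt{x}$ multiplied by $x/q$ is $O(\sqrt{x})$ directly, no $\log q$ needed).
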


\begin{proof}
As in~\cite[Section 24.2]{Opera} we use a standard hyperbola method argument. We write
\[
(1 \ast \psi)(n) = \sum_{k\ell = n} \psi(k) = \sum_{\substack{k \ell = n \\ k < \ell}} \psi(k) + \sum_{\substack{k\ell = n \\ \ell < k}} \psi(n/\ell) + O(\mathbf{1}_{\sqrt{n} \in \mathbb{N}}) = (1+\psi(n))\sum_{\substack{k\ell = n \\ k < \ell}} \psi(k)+ O(\mathbf{1}_{\sqrt{n} \in \mathbb{N}}).
\]
Hence
\begin{align*}
\sum_{\substack{n \leq x \\ n \equiv a\Mod{q}}} (1 \ast \psi)(n)
&= (1+\psi(a)) \sum_{k \leq x^{1/2}} \psi(k) \sum_{\substack{k < \ell \leq x/k \\ \ell \equiv a \overline{k} \pmod{q}}} 1 +O(x^{1/2}) \\
&= (1+\psi(a)) \sum_{k \leq x^{1/2}} \left(\frac{x}{q} \cdot \frac{\psi(k)}{k} - \psi(k) \frac{k}{q} +O(1)\right) + O(x^{1/2}).
\end{align*}
Using partial summation and the trivial bound $|\sum_{n \leq y} \psi(n)| \leq \varphi(q)$ for any $y \geq 1$, 
we obtain 
\begin{align*}
&\sum_{\substack{n \leq x \\ n \equiv a\Mod{q}}} (1 \ast \psi)(n) = (1+\psi(a)) L(1, \psi) + O(x^{1/2}),
\end{align*}
as claimed.
\end{proof}

The next lemma will allow us to reduce establishing Lemma~\ref{le:1astchi_dAP} to establishing Lemma~\ref{le:1astpsiAP}. The needed case $f = 1, g = \psi$ is stated without proof in~\cite[(24.5)]{Opera}. We provide the proof for completeness.

\begin{lemma}
\label{le:fastgdec}
Let $f,g \colon \mathbb{N} \to \mathbb{C}$ be completely multiplicative functions. Then, for any $m, n \in \mathbb{N}$, we have
\[
(f \ast g)(mn) = \sum_{r \mid (m, n)} \mu(r) f(r) g(r) (f \ast g)\left(\frac{m}{r}\right) (f \ast g)\left(\frac{n}{r}\right).
\]
\end{lemma}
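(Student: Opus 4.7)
My plan is to reduce the identity to the prime-power case by a multiplicativity argument, then verify it by direct computation using the explicit formula $(f \ast g)(p^k) = \sum_{i=0}^{k} f(p)^i g(p)^{k-i}$ that holds because $f$ and $g$ are completely multiplicative.

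First, I would write $m = \prod_p p^{a_p}$ and $n = \prod_p p^{b_p}$ and observe that the left-hand side factors as $(f \ast g)(mn) = \prod_p (f \ast g)(p^{a_p + b_p})$ since $f \ast g$ is multiplicative. For the right-hand side, any squarefree $r \mid (m,n)$ corresponds to a subset $S$ of the primes $p$ with $\min(a_p,b_p) \geq 1$, giving $\mu(r) f(r) g(r) = \prod_{p \in S}\bigl(-f(p)g(p)\bigr)$ and $(f \ast g)(m/r)(f \ast g)(n/r) = \prod_p (f \ast g)(p^{a_p - \mathbf{1}_{p \in S}}) (f \ast g)(p^{b_p - \mathbf{1}_{p \in S}})$. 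Interchanging the sum over $S$ with the product over $p$, the right-hand side factors as
\[
\prod_p \Bigl[ (f \ast g)(p^{a_p})(f \ast g)(p^{b_p}) - \mathbf{1}_{a_p,b_p \geq 1} \, f(p)g(p) (f \ast g)(p^{a_p - 1}) (f \ast g)(p^{b_p - 1}) \Bigr].
\]
Thus the identity reduces to proving, for every prime $p$ and all $a, b \geq 0$, the single-prime identity
\[
(f \ast g)(p^{a+b}) = (f \ast g)(p^a)(f \ast g)(p^b) - \mathbf{1}_{a,b \geq 1} \, f(p) g(p) (f \ast g)(p^{a-1}) (f \ast g)(p^{b-1}).
\]

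To verify this, I would set $u = f(p)$, $v = g(p)$ and $U_k \coloneqq (f\ast g)(p^k) = \sum_{i=0}^k u^i v^{k-i}$, which enjoys the recursions $U_a = u U_{a-1} + v^a = v U_{a-1} + u^a$ for $a \geq 1$. Using these,
\[
U_a U_b = (u U_{a-1} + v^a)(v U_{b-1} + u^b) = uv\, U_{a-1} U_{b-1} + u^{b+1} U_{a-1} + v^{a+1} U_{b-1} + u^b v^a,
\]
and a direct expansion shows $u^{b+1} U_{a-1} + u^b v^a + v^{a+1} U_{b-1}$ equals $U_{a+b}$ by partitioning the range $0 \leq i \leq a+b$ into $\{i \geq b+1\}$, $\{i = b\}$, and $\{i \leq b-1\}$. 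When $a = 0$ or $b = 0$ the indicator vanishes and the identity reduces to $U_b = U_0 U_b$ (or symmetrically), which is trivial since $U_0 = 1$.

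The only mildly subtle step is the factorization of the right-hand side as a product over primes; everything else is a routine algebraic check. Once the reduction is done, the verification at prime powers is essentially a telescoping identity for geometric-type sums.
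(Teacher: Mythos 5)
Your proof is correct, but it takes a genuinely different route from the paper. You reduce the identity to the prime-power case: after observing that both sides factor as products over primes (the left side because $f\ast g$ is multiplicative, the right side after interchanging the sum over squarefree divisors $r\mid(m,n)$ with the Euler product), the claim becomes the single-prime identity $U_{a+b} = U_aU_b - \mathbf{1}_{a,b\geq 1}\,uv\,U_{a-1}U_{b-1}$ for $U_k = \sum_{i=0}^k u^iv^{k-i}$, which you verify by a direct telescoping expansion. The paper instead works globally: starting from $(f\ast g)(mn) = \sum_{ab=mn}f(a)g(b)$, it sets $c=(a,m)$, detects the coprimality condition $(\tfrac{a}{c},\tfrac{m}{c})=1$ by M\"obius inversion, and then reparametrizes $a=crk$, $b=\tfrac{m}{c}\ell$ to disentangle the double sum into $(f\ast g)(m/r)\cdot(f\ast g)(n/r)$. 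The paper's argument is shorter once written out and does not need to pass through prime factorizations, but it demands more careful bookkeeping of the divisibility constraints; your reduction-to-primes argument is more modular and the final verification is an explicit, easily checked algebraic identity for truncated geometric sums. Both are complete and valid.
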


\begin{proof}
We have
\[
(f \ast g)(mn) = \sum_{ab = mn} f(a) g(b).
\]
We write $c = (a, m)$ so that $c \mid m$ and $(\frac{a}{c}, \frac{m}{c}) = 1$. Since $ab = mn$ we also must have $\frac{m}{c} \mid b$. Inserting this, and then using M\"obius inversion to detect the condition $(\frac{a}{c}, \frac{m}{c}) = 1$, we obtain
\[
(f \ast g)(mn) = \sum_{c \mid m} \sum_{\substack{ab = mn \\ c \mid a, \frac{m}{c} \mid b \\ (\frac{a}{c}, \frac{m}{c}) = 1}} f(a) g(b) = \sum_{cr \mid m} \mu(r) \sum_{\substack{ab = mn \\ cr \mid a, \frac{m}{c} \mid b}} f(a) g(b).
\]
Next we write $a = cr \cdot k$ and $b = \frac{m}{c} \cdot \ell = r \cdot \frac{m}{cr} \cdot \ell$ (note that since $r \mid \frac{m}{c}$, here $\frac{m}{cr} \in \mathbb{N}$). Now $ab = mn$ corresponds to $k\ell r = n$. Inserting these and using multiplicativity, we obtain 
\begin{align*}
(f \ast g)(mn) &= \sum_{cr \mid m} \mu(r) \sum_{\substack{k \ell r = n}} f(crk) g\left(r \cdot \frac{m}{cr} \cdot \ell\right) \\
&= \sum_{r \mid (m, n)} \mu(r)f(r)g(r) \sum_{c \mid \frac{m}{r}} f(c) g\left(\frac{m}{cr}\right) \sum_{k \ell = \frac{n}{r}} f(k) g\left(\ell\right),
\end{align*}
and the claim follows.
\end{proof}

Now we are ready to prove Lemma~\ref{le:1astchi_dAP}, still following~\cite[Section 24.2]{Opera}.
\begin{proof}[Proof of Lemma~\ref{le:1astchi_dAP}]
We can clearly assume that $(d, q) = 1$ and $\psi(a) = 1$. Using Lemma~\ref{le:fastgdec} we see that
\begin{align*}
\sum_{\substack{n \leq x \\ d \mid n \\ n \equiv a \Mod{q}}} (1 \ast \psi)(n) &= \sum_{\substack{m \leq x/d \\ m \equiv a\overline{d} \Mod{q}}} (1 \ast \psi)(md) \\
&= \sum_{r \mid d} \mu(r) \psi(r) (1 \ast \psi)\left(\frac{d}{r}\right) \sum_{\substack{m \leq x/d \\ r \mid m \\ m \equiv a \overline{d} \Mod{q}}} (1 \ast \psi)\left(\frac{m}{r}\right).
\end{align*}
First writing $m = rn$ and then applying Lemma~\ref{le:1astpsiAP}, we obtain
\begin{align*}
\sum_{\substack{n \leq x \\ d \mid n \\ n \equiv a \Mod{q}}} (1 \ast \psi)(n) &= \sum_{r \mid d} \mu(r) \psi(r) (1 \ast \psi)\left(\frac{d}{r}\right) \sum_{\substack{n \leq x/(dr)\\ n \equiv a \overline{dr} \Mod{q}}} (1 \ast \psi)(m) \\
&= \sum_{r \mid d} \mu(r) \psi(r) (1 \ast \psi)\left(\frac{d}{r}\right) (1+\psi(a \overline{dr})) L(1, \psi) \frac{x}{dr q} + O\left(\frac{x^{1/2+\varepsilon}}{d^{1/2}}\right).
\end{align*}
Now $\psi(a\overline{dr}) = \psi(a) \psi(d/r) = \psi(d/r)$ and it is easy to see that $(1 \ast \psi)(d/r) (1+\psi(d/r)) = 2 (1 \ast \psi)(d/r)$ and so
\begin{align*}
&\sum_{\substack{n \leq x \\ d \mid n \\ n \equiv a \Mod{q}}} (1 \ast \psi)(n) = 2L(1, \psi) \frac{x}{q} \sum_{r \mid d} \frac{\mu(r) \psi(r)}{dr} (1 \ast \psi)\left(\frac{d}{r}\right)  + O\left(\frac{x^{1/2+\varepsilon}}{d^{1/2}}\right).
\end{align*}
The claim follows with 
\[
h(d) = \sum_{r \mid d} \frac{\mu(r) \psi(r)}{dr} (1 \ast \psi)\left(\frac{d}{r}\right),
\]
as it is easy to establish that this satisfies~\eqref{eq:hdef}.
\end{proof}

When we apply Lemma~\ref{le:1astchi_dAP} we need the following classical lower bound for $L(1, \psi)$. 

\begin{lemma}[Lower bound for $L(1,\psi)$]\label{le_MV} Let $q\in \mathbb{N}$ and let $\psi\Mod q$ be a quadratic character. Then we have $L(1,\psi)\gg q^{-1/2}$, with the implied constant being effective. 
\end{lemma}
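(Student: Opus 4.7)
This is a classical effective bound and my plan is to prove it via the Dirichlet hyperbola method combined with the P\'olya--Vinogradov inequality, exploiting the nonnegativity of $1\ast\psi$.

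First I would reduce to the case of primitive $\psi$. If $\psi$ is induced by a primitive character $\psi^{*}$ of conductor $f\mid q$, then $L(1,\psi)=L(1,\psi^{*})\prod_{p\mid q,\,p\nmid f}(1-\psi^{*}(p)/p)$. Since each factor is $\geq 1-1/p$, the product is $\gg \varphi(q)/q$, which is $\gg 1/\log\log q$ effectively. Thus it is enough to prove $L(1,\psi^{*})\gg f^{-1/2}$ with an effective constant (the mild $\log\log$ loss is absorbed in the implied constant of the lemma, or handled by a slightly more careful estimate of the product using the fact that only $O(\log q/\log\log q)$ primes $p\mid q$ contribute).

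Now assume $\psi$ is primitive of conductor $q$. The key observation is that $f(n):=(1\ast\psi)(n)$ is a nonnegative multiplicative function (because $\psi$ is real, $f(p^{k})=1+\psi(p)+\dots+\psi(p)^{k}\geq 0$), with $f(m^{2})\geq 1$ for every $m\geq 1$. Consider
\[
S(x)\coloneqq \sum_{n\leq x}\frac{f(n)}{\sqrt{n}}.
\]
The lower bound $S(x)\geq \sum_{m\leq\sqrt{x}}\frac{1}{m}\geq \tfrac{1}{2}\log x$ is immediate. For the upper bound I would apply the hyperbola method: writing $S(x)=\sum_{dk\leq x}\psi(d)/\sqrt{dk}$ and splitting at $d=\sqrt{x}$,
\[
S(x)=\sum_{d\leq\sqrt{x}}\frac{\psi(d)}{\sqrt{d}}\sum_{k\leq x/d}\frac{1}{\sqrt{k}}+\sum_{k\leq\sqrt{x}}\frac{1}{\sqrt{k}}\sum_{\sqrt{x}<d\leq x/k}\frac{\psi(d)}{\sqrt{d}}.
\]
Using $\sum_{k\leq y}1/\sqrt{k}=2\sqrt{y}+O(1)$ and the P\'olya--Vinogradov bound $|\sum_{n\leq y}\psi(n)|\ll\sqrt{q}\log q$ (together with partial summation) to control $\sum_{\sqrt{x}<d\leq Y}\psi(d)/\sqrt{d}$, I would arrive at
\[
S(x)=2\sqrt{x}\,L(1,\psi)+O\bigl(q^{1/2}\log q\bigr).
\]

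Combining the two bounds gives $\tfrac{1}{2}\log x\leq 2\sqrt{x}\,L(1,\psi)+Cq^{1/2}\log q$ for some effective constant $C$. Choosing $x=Aq\log^{2} q$ with $A$ a sufficiently large absolute constant makes the error term dominated by the left-hand side, so that $\sqrt{x}\,L(1,\psi)\gg \log x\gg \log q$, whence $L(1,\psi)\gg q^{-1/2}$ with an effective implied constant.

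The main (and essentially only) obstacle is keeping careful track of the error terms from the hyperbola split: one needs an estimate of the form $\sum_{\sqrt{x}<d\leq Y}\psi(d)/\sqrt{d}\ll q^{1/2}\log q\cdot x^{-1/4}$ to ensure the second hyperbola piece does not absorb the main term, which is exactly what P\'olya--Vinogradov plus partial summation provides. Everything used (P\'olya--Vinogradov, partial summation, hyperbola method) is effective, so the resulting implied constant in $L(1,\psi)\gg q^{-1/2}$ is effective as required.
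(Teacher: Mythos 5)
Your reduction to primitive characters is essentially fine once one observes that the primes $p\mid q$, $p\nmid f$ all divide $q/f$, so the Euler product factor is $\geq \varphi(q/f)/(q/f)\gg 1/\log\log(q/f)$, and $\sqrt{q/f}\gg\log\log(q/f)$ takes care of the loss. The gap is in the final step. The hyperbola method with P\'olya--Vinogradov really does give
\[
\tfrac{1}{2}\log x\ \leq\ S(x)\ =\ 2\sqrt{x}\,L(1,\psi)+O\bigl(q^{1/2}\log q\bigr),
\]
but the error $O(q^{1/2}\log q)$ is a fixed quantity that does not decrease as $x$ grows (it comes from the truncated character sums, and P\'olya--Vinogradov saturates at $\sqrt{q}\log q$). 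Choosing $x=Aq\log^{2}q$, the left-hand side is of size $\log q$, which is \emph{swamped by}, not dominating, the error $Cq^{1/2}\log q$; the inequality is vacuous at that scale. To make $\tfrac{1}{2}\log x$ beat the error one is forced to take $\log x\gg q^{1/2}\log q$, i.e.\ $x$ exponentially large in $\sqrt{q}$, and the resulting lower bound $L(1,\psi)\geq(\tfrac{1}{2}\log x-Cq^{1/2}\log q)/(2\sqrt{x})$ is then exponentially smaller than $q^{-1/2}$. So the claim ``the error term is dominated by the left-hand side'' is simply false, and this route cannot give $L(1,\psi)\gg q^{-1/2}$.

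The paper avoids this by citing Montgomery--Vaughan, Theorem~11.11, whose proof goes through the functional equation and Gauss sums (equivalently the Dirichlet class number formula), which re-express $L(1,\psi)$ in terms of a \emph{finite} sum of length $q$; positivity is then exploited without any truncation error at all. If you want to stay close to your nonnegativity-of-$1*\psi$ strategy, the standard repair is to replace the sharp cutoff $n\leq x$ by a smooth weight (e.g.\ $e^{-n/x}$ or a compactly supported bump), whose Mellin/Fourier transform decays rapidly enough that the truncation error becomes $O(1)$ rather than $O(\sqrt{q}\log q)$; with that change, taking $x\asymp q$ does give $L(1,\psi)\gg q^{-1/2}$. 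As written, though, the argument does not close.
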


\begin{proof}
See e.g.~\cite[Theorem 11.11]{MonVau}. 
\end{proof}

\begin{remark}
\label{rem:eta+impr}
Of course using Siegel's ineffective bound $L(1, \chi) \gg_\varepsilon q^{-\varepsilon}$ would yield a small improvement to Proposition~\ref{prop:exceptional1}. Also, it would be possible to improve on the error term in Lemma~\ref{le:1astpsiAP} by being more careful with the error terms (compare with the divisor problem of obtaining asymptotics for $\sum_{n \leq x} \tau(n)$) which would also yield an improvement on Proposition~\ref{prop:exceptional1}.
\end{remark}

\subsection{Proof of Proposition~\ref{prop:exceptional1}}
We are now ready to prove Proposition \ref{prop:exceptional1}. Let $\varepsilon' > 0$ be small in terms of $\varepsilon$. Let $b \in \mathbb{Z}_q^\times$ with $\psi(b) = 1$. By Lemmas~\ref{le:1astchi_dAP} and~\ref{le_MV}, the sequence $((1\ast \psi)(n) \mathbf{1}_{n \equiv b \Mod{q}})_{n \leq q^{L'}}$ has for any $L' \in [1, 2L_2]$ level of distribution $(L'-3-\varepsilon')/L'$ with dimension $2$, size $2L(1, \psi)q^{L'-1}$ and density $h$, with $h$ as in Lemma~\ref{le:1astpsiAP}.

We have by Buchstab's identity
\begin{align*}
2\sum_{\substack{p \leq q^{2L_2} \\ p \equiv a \pmod{q}}}1&\geq \sum_{\substack{p \leq q^{2L_2} \\ p \equiv a \pmod{q}}} (1 \ast \psi)(p) \\
&= \sum_{\substack{n \leq q^{2L_2}\\ n \equiv a \pmod{q} \\ (n, P(q^{L_1})) = 1}} (1\ast \psi)(n) - \sum_{\substack{q^{L_1} \leq p_1 \leq q^{L_2}}} (1 \ast \psi)(p_1) \sum_{\substack{n \leq q^{2L_2}/p_1 \\ n \equiv a\overline{p_1} \pmod{q} \\ p \mid n \implies p \geq p_1}} (1 \ast \psi)(n)\\
&\geq \sum_{\substack{n \leq q^{2L_2}\\ n \equiv a \Mod{q} \\ (n, P(q^{L_1})) = 1}} (1\ast \psi)(n) - \sum_{\substack{q^{L_1} \leq p_1 \leq q^{L_2}}} (1 \ast \psi)(p_1) \sum_{\substack{n \leq q^{2L_2}/p_1 \\ n \equiv a\overline{p_1} \pmod{q} \\ (n,P(q^{L_1}))=1}} (1 \ast \psi)(n).
\end{align*}
Note that in the second term $p_1 \leq q^{L_2}$ so we can there use a sieve of level $D = q^{L_2-3-\varepsilon'}$. By the two-dimensional sieve (see e.g.~\cite[Theorem 11.12]{Opera}), we obtain a lower bound
\begin{align*}
&2\sum_{\substack{p \leq q^{2L_2} \\ p \equiv a \pmod{q}}}1 \geq  \left(f_2\left(\frac{2L_2-3-\varepsilon'}{L_1}\right)+o(1)\right) L(1, \psi) \frac{2q^{2L_2}}{q} \prod_{\substack{p < q^{L_1}}} \left(1-h(p)\right) \\
&\quad -  \sum_{q^{L_1} < p_1 \leq q^{L_2}} \frac{1+\psi(p_1)}{p_1} \frac{2q^{2L_2}}{q} L(1, \psi) \left(F_2\left( \frac{L_2-3-\varepsilon'}{L_1}\right)+o(1)\right) \prod_{\substack{p < q^{L_1}}} \left(1-h(p)\right).
\end{align*}
Hence, by~\eqref{eq:eta+assumption} we have
\begin{align*}
&\sum_{\substack{p \leq q^{2L_2} \\ p \equiv a \pmod{q}}} (1 \ast \psi)(p) \geq   L(1, \psi) \frac{2q^{2L_2}}{q} \prod_{\substack{p < q^{L_1}}} \left(1-h(p)\right) \\
&\qquad \cdot \left(f_2\left(\frac{2L_2-3-\varepsilon'}{L_1}\right) - \left(\eta^+(L_1, L_2)\log\frac{L_2}{L_1} - \varepsilon\right) F_2\left(\frac{L_2-3-\varepsilon'}{L_1}\right)+o(1)\right),
\end{align*}
and the claim follows once $\varepsilon'$ is sufficiently small in terms of $\varepsilon$. \qed

\section*{Acknowledgements}
KM was partially supported by Academy of Finland grant no. 285894. JM was supported by the European Research Council (ERC) under the European Union's Horizon 2020 research and innovation programme (grant agreement No 851318).  JT was supported by a von Neumann Fellowship (NSF grant no. DMS-1926686) and funding from the European Union's Horizon Europe research and innovation programme under Marie Sk\l{}odowska-Curie grant agreement no. 101058904.  The authors would like to thank Andrew Granville for helpful discussions.

\bibliography{refs}
\bibliographystyle{plain}

\end{document}